\newcommand{\email}[1]{\href{mailto:#1}{\nolinkurl{#1}}}
\renewcommand\familydefault{\rmdefault}
\DeclareMathAlphabet{\mathrm}{OT1}{\familydefault}{m}{n}
\def\operator@font{\mathgroup\symoperators\rm}
\setlist[enumerate]{itemsep=-2pt,topsep=1pt}
\setlist[itemize]{itemsep=-2pt,topsep=1pt}
\definecolor{labelkey}{rgb}{0,0.08,0.45}
\definecolor{refkey}{rgb}{0,0.6,0.0}
\definecolor{lblue}{HTML}{0455BF}
\definecolor{dgreen}{HTML}{02724A}
\definecolor{myellow}{HTML}{D97904}
\definecolor{dred}{HTML}{D90404}
\renewcommand{\leq}{\ensuremath{\leqslant}}
\renewcommand{\geq}{\ensuremath{\geqslant}}
\newcommand{\minimize}[2]{\ensuremath{\underset{\substack{{#1}}}%
{\text{\rm minimize}}\;\;#2}}
\newcommand{\scal}[2]{{\langle{{#1}\mid{#2}}\rangle}}
\newcommand{\sscal}[2]{{\big\langle{{#1}\mid{#2}}\big\rangle}}
\newcommand{\menge}[2]{\big\{{#1}~|~{#2}\big\}} 
\DeclareFontFamily{U}{BOONDOX-calo}{\skewchar\font=45 }
\DeclareFontShape{U}{BOONDOX-calo}{m}{n}{
  <-> s*[1.05] BOONDOX-r-calo}{}
\DeclareFontShape{U}{BOONDOX-calo}{b}{n}{
  <-> s*[1.05] BOONDOX-b-calo}{}
\DeclareMathAlphabet{\mathcalboondox}{U}{BOONDOX-calo}{m}{n}
\SetMathAlphabet{\mathcalboondox}{bold}{U}{BOONDOX-calo}{b}{n}
\DeclareMathAlphabet{\mathbcalboondox}{U}{BOONDOX-calo}{b}{n}
\newcommand{\MM}{\ensuremath{\mathcalboondox{m}}}
\newcommand{\LL}{\ensuremath{\mathcalboondox{l}}}
\newcommand{\CC}{\ensuremath{\mathcalboondox{c}}}
\newcommand{\sad}{\ensuremath{\boldsymbol{\EuScript{S}}}}
\newcommand{\GGG}{\ensuremath{\boldsymbol{\mathcal{G}}}}
\newcommand{\HHH}{\ensuremath{\boldsymbol{\mathcal{H}}}}
\newcommand{\XXX}{\ensuremath{\boldsymbol{\mathsf{X}}}}
\newcommand{\ZZ}{\ensuremath{{\mathcal{Z}}}}
\newcommand{\HH}{\ensuremath{{\mathcal{H}}}}
\newcommand{\GG}{\ensuremath{{\mathcal{G}}}}
\newcommand{\KK}{\ensuremath{{\mathcal{K}}}}
\newcommand{\WW}{\ensuremath{{\mathcal{V}}}}
\newcommand{\Sum}{\ensuremath{\displaystyle\sum}}
\newcommand{\emp}{\ensuremath{\varnothing}}
\newcommand{\Id}{\ensuremath{\mathrm{Id}}}
\newcommand{\RR}{\ensuremath{\mathbb{R}}}
\newcommand{\RP}{\ensuremath{\left[0,{+}\infty\right[}}
\newcommand{\RPP}{\ensuremath{\left]0,{+}\infty\right[}}
\newcommand{\RX}{\ensuremath{\left]{-}\infty,{+}\infty\right]}}
\newcommand{\RXX}{\ensuremath{\left[{-}\infty,{+}\infty\right]}}
\newcommand{\NN}{\ensuremath{\mathbb{N}}}
\newcommand{\weakly}{\ensuremath{\rightharpoonup}}
\newcommand{\exi}{\ensuremath{\exists\,}}
\DeclareMathOperator{\card}{card}
\DeclareMathOperator{\zer}{zer}
\newcommand{\pinf}{\ensuremath{{{+}\infty}}}
\DeclareMathOperator{\epi}{epi}
\DeclareMathOperator{\dom}{dom}
\newcommand{\prox}{\ensuremath{\mathrm{prox}}}
\newcommand{\proj}{\ensuremath{\mathrm{proj}}}
\DeclareMathOperator{\gra}{gra}
\DeclareMathOperator{\sri}{sri}
\DeclareMathOperator{\reli}{ri}
\newcommand{\infconv}{\ensuremath{\mbox{\small$\,\square\,$}}}
\newcommand{\zeroun}{\ensuremath{\left]0,1\right[}}
\newcommand{\einfconv}{\ensuremath{\mbox{\footnotesize$\,\boxdot\,$}}}
\def\abstract{\noindent{\bfseries Abstract}. \ignorespaces}
\newtheorem{theorem}{Theorem}[section]
\newtheorem{lemma}[theorem]{Lemma}
\newtheorem{corollary}[theorem]{Corollary}
\newtheorem{proposition}[theorem]{Proposition}
\newtheorem{assumption}[theorem]{Assumption}
\theoremstyle{plain}{\theorembodyfont{\rmfamily}%
}
\theoremstyle{plain}{\theorembodyfont{\rmfamily}%
\newtheorem{example}[theorem]{Example}}
\theoremstyle{plain}{\theorembodyfont{\rmfamily}%
\newtheorem{remark}[theorem]{Remark}}
\theoremstyle{plain}{\theorembodyfont{\rmfamily}%
\newtheorem{algorithm}[theorem]{Algorithm}}
\theoremstyle{plain}{\theorembodyfont{\rmfamily}%
}
\theoremstyle{plain}{\theorembodyfont{\rmfamily}%
\newtheorem{definition}[theorem]{Definition}}
\theoremstyle{plain}{\theorembodyfont{\rmfamily}%
\newtheorem{problem}[theorem]{Problem}}
\theoremstyle{plain}{\theorembodyfont{\rmfamily}%
}
\numberwithin{equation}{section}
\newcommand*\mute{{\mkern 2mu\cdot\mkern 2mu}}
\begin{document}

\title{\sffamily\huge%
\vskip -12mm 
Multivariate Monotone Inclusions in Saddle Form\thanks{%
Contact author: P. L. Combettes.
Email: \email{plc@math.ncsu.edu}.
Phone: +1 919 515 2671.
This work was supported by the National Science Foundation
under grant CCF-1715671.
}
}

\author{Minh N. B\`ui and Patrick L. Combettes\\
\small North Carolina State University,
Department of Mathematics,
Raleigh, NC 27695-8205, USA\\
\small \email{mnbui@ncsu.edu}\: and \:\email{plc@math.ncsu.edu}
}

\date{~}
\maketitle

\vskip -18mm

\begin{abstract}
We propose a novel approach to monotone operator splitting based on
the notion of a saddle operator. Under investigation is a highly
structured multivariate monotone inclusion problem involving a mix
of set-valued, cocoercive, and Lipschitzian monotone operators, as
well as various monotonicity-preserving operations among them. This
model encompasses most formulations found in the literature. A
limitation of existing primal-dual algorithms is that they operate
in a product space that is too small to achieve full splitting of
our problem in the sense that each operator is used individually.
To circumvent this difficulty, we recast the problem as that of
finding a zero of a saddle operator that acts on a bigger space.
This leads to an algorithm of unprecedented flexibility, which
achieves full splitting, exploits the specific attributes of each
operator, is asynchronous, and requires to activate only blocks of
operators at each iteration, as opposed to activating all of them.
The latter feature is of critical importance in large-scale
problems. Weak convergence of the main algorithm is established,
as well as the strong convergence of a variant.
Various applications are discussed, and instantiations of
the proposed framework in the context of variational inequalities
and minimization problems are presented.
\end{abstract}

\begin{keywords}
Monotone inclusion,
monotone operator,
saddle form,
operator splitting,
block-iterative algorithm,
asynchronous algorithm,
strong convergence.
\end{keywords}

\begin{MSC}
47H05, 49M27, 47J20, 65K05, 90C25.
\end{MSC}

\section{Introduction}
\label{sec:1}

In 1979, several methods appeared to solve the basic problem
of finding a zero of the sum of two maximally monotone operators
in a real Hilbert space \cite{Lion79,Merc79,Pass79}. Over the past
forty years, increasingly complex inclusion problems and
solution techniques have been considered 
\cite{Bot13a,Siop11,Bri18a,Warp20,Siop13,MaPr18,%
Ecks17,John20,Tsen00} 
to address concrete problems in fields as diverse as 
game theory \cite{Atto08,Bric13,YiPa19},
evolution inclusions \cite{Sico10},
traffic equilibrium \cite{Sico10,Fuku96},
domain decomposition \cite{Atto16},
machine learning \cite{Bach12,Bric19},
image recovery \cite{Bane20,Bot14,Jmiv11,Hint06},
mean field games \cite{Bri18b},
convex programming \cite{Mono18,Liyu15},
statistics \cite{Ejst20,Bien20},
neural networks \cite{Neur20},
signal processing \cite{Smms05},
partial differential equations \cite{Ghou09},
tensor completion \cite{Mizo19},
and optimal transport \cite{Papa14}.
In our view, two challenging issues in the field of 
monotone operator splitting algorithms are the following:
\begin{itemize}
\item
A number of independent monotone inclusion models coexist with
various assumptions on the operators and different types of
operation among these operators. At the same time, as will be seen
in Section~\ref{sec:4}, they are not sufficiently general to cover
important applications. 
\item
Most algorithms do not allow asynchrony and impose that all the
operators be activated at each iteration. They can therefore not
handle efficiently modern large-scale problems. The only methods
that are asynchronous and block-iterative are limited to specific
scenarios \cite{MaPr18,Ecks17,John20} and they do not cover
inclusion models such as that of \cite{Siop13}. 
\end{itemize}
In an attempt to bring together and extend the application scope of
the wide variety of unrelated models that coexist in the
literature, we propose the following multivariate formulation which
involves a mix of set-valued, cocoercive, and Lipschitzian monotone
operators, as well as various monotonicity-preserving operations
among them.

\begin{problem}
\label{prob:1}
Let $(\HH_i)_{i\in I}$ and $(\GG_k)_{k\in K}$
be finite families of real Hilbert spaces with
Hilbert direct sums $\HHH=\bigoplus_{i\in I}\HH_i$ and
$\GGG=\bigoplus_{k\in K}\GG_k$.
Denote by $\boldsymbol{x}=(x_i)_{i\in I}$
a generic element in $\HHH$.
For every $i\in I$ and every $k\in K$,
let $s_i^*\in\HH_i$, let $r_k\in\GG_k$,
and suppose that the following are satisfied:
\begin{enumerate}[label={\rm[\alph*]}]
\item
\label{prob:1a}
$A_i\colon\HH_i\to 2^{\HH_i}$ is maximally monotone,
$C_i\colon\HH_i\to\HH_i$ is cocoercive with constant
$\alpha_i^{\CC}\in\RPP$,
$Q_i\colon\HH_i\to\HH_i$ is monotone and Lipschitzian
with constant $\alpha_i^{\LL}\in\RP$, and 
$R_i\colon\HHH\to\HH_i$.
\item
\label{prob:1b}
$B_k^{\MM}\colon\GG_k\to 2^{\GG_k}$ is maximally monotone,
$B_k^{\CC}\colon\GG_k\to\GG_k$ is cocoercive with constant
$\beta_k^{\CC}\in\RPP$, and $B_k^{\LL}\colon\GG_k\to\GG_k$
is monotone and Lipschitzian with constant $\beta_k^{\LL}\in\RP$.
\item
\label{prob:1c}
$D_k^{\MM}\colon\GG_k\to 2^{\GG_k}$ is maximally monotone,
$D_k^{\CC}\colon\GG_k\to\GG_k$ is cocoercive with constant
$\delta_k^{\CC}\in\RPP$, and $D_k^{\LL}\colon\GG_k\to\GG_k$
is monotone and Lipschitzian with constant $\delta_k^{\LL}\in\RP$.
\item
\label{prob:1d}
$L_{ki}\colon\HH_i\to\GG_k$ is linear and bounded.
\end{enumerate}
In addition, it is assumed that
\begin{enumerate}[resume,label={\rm[\alph*]}]
\item
\label{prob:1e}
$\boldsymbol{R}\colon\HHH\to\HHH\colon
\boldsymbol{x}\mapsto(R_i\boldsymbol{x})_{i\in I}$
is monotone and Lipschitzian with constant $\chi\in\RP$.
\end{enumerate}
The objective is to solve the primal problem
\begin{multline}
\label{e:1p}
\text{find}\;\:\overline{\boldsymbol{x}}\in\HHH
\;\:\text{such that}\;\:(\forall i\in I)\;\;
s_i^*\in A_i\overline{x}_i+C_i\overline{x}_i+Q_i\overline{x}_i
+R_i\overline{\boldsymbol{x}}\\
+\Sum_{k\in K}L_{ki}^*\Bigg(\Big(
\big(B_k^{\MM}+B_k^{\CC}+B_k^{\LL}\big)\infconv
\big(D_k^{\MM}+D_k^{\CC}+D_k^{\LL}\big)\Big)
\Bigg(\Sum_{j\in I}L_{kj}\overline{x}_j-r_k\Bigg)\Bigg)
\end{multline}
and the associated dual problem
\begin{multline}
\label{e:1d}
\text{find}\;\:\overline{\boldsymbol{v}}^*\in\GGG
\;\:\text{such that}\;\:
(\exi\boldsymbol{x}\in\HHH)(\forall i\in I)(\forall k\in K)\\
\begin{cases}
s^*_i-\Sum_{j\in K}L_{ji}^*\overline{v}_j^*\in
A_ix_i+C_ix_i+Q_ix_i+R_i\boldsymbol{x}\\
\overline{v}_k^*\in
\Big(\big(B_k^{\MM}+B_k^{\CC}+B_k^{\LL}\big)\infconv
\big(D_k^{\MM}+D_k^{\CC}+D_k^{\LL}\big)\Big)
\Bigg(\Sum_{j\in I}L_{kj}x_j-r_k\Bigg).
\end{cases}
\end{multline}
\end{problem}

Our highly structured model involves three basic monotonicity
preserving operations, namely addition, composition with linear
operators, and parallel sum. It extends the state-of-the-art
model of \cite{Siop13}, where the simpler form
\begin{equation}
\label{e:12p}
(\forall i\in I)\quad
s_i^*\in A_i\overline{x}_i+Q_i\overline{x}_i+\Sum_{k\in K}L_{ki}^*
\Bigg(\big(B_k^\MM\infconv D_k^\MM\big)
\Bigg(\Sum_{j\in I}L_{kj}\overline{x}_j-r_k\Bigg)\Bigg)
\end{equation}
of the system in \eqref{e:1p} was investigated;
see also \cite{Sico10,MaPr18} for special cases.
In an increasing number of applications, the sets $I$
and $K$ can be sizable. To handle such large-scale problems, it is
critical to implement block-iterative solution algorithms, in which
only subgroups of the operators involved in the problem need to be
activated at each iteration. In addition, it is desirable that the
algorithm be asynchronous in the sense that, at any iteration, it
has the ability to incorporate the result of calculations initiated
at earlier iterations. Such methods have been proposed for special
cases of Problem~\ref{prob:1}: first in \cite{MaPr18} for the
system 
\begin{equation}
\label{e:7p}
\text{find}\;\:\overline{\boldsymbol{x}}\in\HHH
\;\:\text{such that}\;\:(\forall i\in I)\;\;
s_i^*\in A_i\overline{x}_i
+\Sum_{k\in K}L_{ki}^*\Bigg(B_k^{\MM}
\Bigg(\Sum_{j\in I}L_{kj}\overline{x}_j-r_k\Bigg)\Bigg),
\end{equation}
then in \cite{Ecks17} for the inclusion
(we omit the subscript `$1$')
\begin{equation}
\label{e:8p}
\text{find}\;\:\overline{x}\in\HH\;\:\text{such that}\;\:
0\in\Sum_{k\in K}L_k^*\big(B_k^{\MM}
(L_k\overline{x})\big),
\end{equation}
and more recently in \cite{John20} for the inclusion
\begin{equation}
\label{e:9p}
\text{find}\;\:\overline{x}\in\HH\;\:\text{such that}\;\:
0\in A\overline{x}+Q\overline{x}
+\Sum_{k\in K}L_k^*\big((B_k^{\MM}+B_k^{\LL})
(L_k\overline{x})\big).
\end{equation}
It is clear that the formulations \eqref{e:7p} and \eqref{e:9p}
are not interdependent. Furthermore, as we shall see in
Section~\ref{sec:4}, many applications of interest are not covered
by either of them. From both a theoretical and a practical
viewpoint, it is therefore important to unify and extend these
approaches. To achieve this goal, we propose to design an
algorithm for solving the general Problem~\ref{prob:1} which
possesses simultaneously the following features:
\begin{dingautolist}{192}
\setlength\itemsep{1pt}
\item
\label{f:1}
It has the ability to process all the operators individually and
exploit their specific attributes, e.g., set-valuedness,
cocoercivity, Lipschitz continuity, and linearity.
\item
\label{f:2}
It is block-iterative in the sense that it does not need to
activate all the operators at each iteration, but only a
subgroup of them.
\item
\label{f:3}
It is asynchronous.
\item
\label{f:4}
Each set-valued monotone operator is scaled by its own, 
iteration-dependent, parameter.
\item
\label{f:5}
It does not require any knowledge of the norms of the linear
operators involved in the model.
\end{dingautolist}
Let us observe that the method of \cite{MaPr18} has features
\ref{f:1}--\ref{f:5}, but it is restricted to \eqref{e:7p}.
Likewise, the method of \cite{John20} has features
\ref{f:1}--\ref{f:5}, but it is restricted to \eqref{e:9p}.

Solving the intricate Problem~\ref{prob:1} with the requirement
\ref{f:1} does not seem possible with existing tools. The presence
of requirements \ref{f:2}--\ref{f:5} further complicates this task.
In particular, the Kuhn--Tucker approach initiated in \cite{Siop11}
--- and further developed in 
\cite{Siop14,Bot13a,Siop13,MaPr18,John20,John19} ---
relies on finding a zero of an operator acting on the primal-dual
space $\HHH\oplus\GGG$. However, in the context of
Problem~\ref{prob:1}, this primal-dual space is too small to
achieve full splitting in the sense that each operator is used
individually. To circumvent this difficulty, we propose a novel
splitting strategy that consists of recasting the problem
as that of finding a zero of a saddle operator acting on the
bigger space $\HHH\oplus\GGG\oplus\GGG\oplus\GGG$.
This is done in Section~\ref{sec:2}, where we define the saddle
form of Problem~\ref{prob:1}, study its properties, and propose
outer approximation principles to solve it. In Section~\ref{sec:3},
the main asynchronous block-iterative algorithm is presented and
we establish its weak convergence under mild conditions on
the frequency at which the operators are selected.
We also present a strongly convergent variant. The specializations
to variational inequalities and multivariate minimization are
discussed in Section~\ref{sec:4}, along with several applications.
Appendix~\ref{sec:A} contains auxiliary results.

\noindent
{\bfseries Notation.}
The notation used in this paper is standard and follows
\cite{Livre1}, to which one can refer for background and
complements on monotone operators and convex analysis. 
Let $\KK$ be a real Hilbert space.
The symbols $\scal{\cdot}{\cdot}$ and $\|\cdot\|$
denote the scalar product of $\KK$ and the associated norm,
respectively. The expressions $x_n\weakly x$ and $x_n\to x$
denote, respectively, the weak and the strong convergence of a
sequence $(x_n)_{n\in\NN}$ to $x$ in $\KK$, and $2^{\KK}$ denotes
the family of all subsets of $\KK$. Let $A\colon\KK\to 2^{\KK}$.
The graph of $A$ is $\gra A=\menge{(x,x^*)\in\KK\times\KK}{x^*\in
Ax}$, the set of zeros of $A$ is
$\zer A=\menge{x\in\KK}{0\in Ax}$, the inverse of $A$ is
$A^{-1}\colon\KK\to 2^{\KK}\colon x^*\mapsto
\menge{x\in\KK}{x^*\in Ax}$, and the resolvent of $A$ is
$J_A=(\Id+A)^{-1}$, where $\Id$ is the identity operator on $\KK$.
Further, $A$ is monotone if
\begin{equation}
\big(\forall(x,x^*)\in\gra A\big)
\big(\forall(y,y^*)\in\gra A\big)\quad
\scal{x-y}{x^*-y^*}\geq 0,
\end{equation}
and it is maximally monotone if, 
for every $(x,x^*)\in\KK\times\KK$, 
\begin{equation} 
\label{e:maxmon2}
(x,x^*)\in\gra A\quad\Leftrightarrow\quad
\big(\forall (y,y^*)\in\gra A\big)\;\;\scal{x-y}{x^*-y^*}\geq 0.
\end{equation}
If $A$ is maximally monotone, then $J_A$ is a single-valued
operator defined on $\KK$.
The parallel sum of
$B\colon\KK\to 2^{\KK}$ and $D\colon\KK\to 2^{\KK}$ is
$B\infconv D=(B^{-1}+D^{-1})^{-1}$. An operator
$C\colon\KK\to\KK$ is cocoercive with constant $\alpha\in\RPP$ if 
$(\forall x\in\KK)(\forall y\in\KK)$
$\scal{x-y}{Cx-Cy}\geq\alpha\|Cx-Cy\|^2$.
We denote by $\Gamma_0(\KK)$ the class of lower semicontinuous
convex functions $f\colon\KK\to\RX$ such that 
$\dom f=\menge{x\in\KK}{f(x)<\pinf}\neq\emp$.
Let $f\in\Gamma_0(\KK)$. The conjugate of $f$ is
the function $\Gamma_0(\KK)\ni f^*\colon x^*\mapsto
\sup_{x\in\KK}(\scal{x}{x^*}-f(x))$ and the subdifferential of $f$
is the maximally monotone operator
$\partial f\colon\KK\to 2^{\KK}\colon
x\mapsto\menge{x^*\in\KK}{(\forall y\in\KK)\;
\scal{y-x}{x^*}+f(x)\leq f(y)}$. In addition,
$\epi f$ is the epigraph of $f$. For every $x\in\KK$, the unique
minimizer of $f+(1/2)\|\mute -x\|^2$ is denoted by $\prox_fx$. We
have $\prox_f=J_{\partial f}$. Given $h\in\Gamma_0(\KK)$, the
infimal convolution of $f$ and $h$ is $f\infconv
h\colon\KK\to\RXX\colon x\mapsto\inf_{y\in\KK}(f(y)+h(x-y))$; the
infimal convolution $f\infconv h$ is exact if the infimum is
achieved everywhere, in which case we write $f\einfconv h$. 
Now let $(\KK_i)_{i\in I}$ be a finite family of real
Hilbert spaces and, for every $i\in I$, let $f_i\colon\KK_i\to\RX$.
Then 
\begin{equation} 
\bigoplus_{i\in I}f_i\colon
\boldsymbol{\KK}=\bigoplus_{i\in I}\KK_i\to\RX\colon
\boldsymbol{x}\mapsto\sum_{i\in I}f_i(x_i).
\end{equation} 
The partial derivative of a differentiable function
$\Theta\colon\boldsymbol{\KK}\to\RR$ relative to $\KK_i$ is denoted
by $\nabla_{\!i}\,\Theta$. Finally, let $C$ be a nonempty convex
subset of $\KK$. A point $x\in C$ belongs to the strong relative
interior of $C$, in symbols $x\in\sri C$, if
$\bigcup_{\lambda\in\RPP}\lambda(C-x)$ is a closed vector subspace
of $\KK$. If $C$ is closed, the projection operator onto it is
denoted by $\proj_C$ and the normal cone operator of $C$ is the
maximally monotone operator
\begin{equation}
N_C\colon\KK\to 2^\KK\colon x\mapsto
\begin{cases}
\menge{x^*\in\KK}{\sup\scal{C-x}{x^*}\leq 0},&
\text{if}\;\:x\in C;\\
\emp,&\text{otherwise}.
\end{cases}
\end{equation}

\section{The saddle form of Problem~\ref{prob:1}}
\label{sec:2}

A classical Lagrangian setting for convex minimization is the
following. Given real Hilbert spaces $\HH$ and $\GG$,
$f\in\Gamma_0(\HH)$, $g\in\Gamma_0(\GG)$,
and a bounded linear operator $L\colon\HH\to\GG$,
consider the primal problem
\begin{equation}
\label{e:7039p}
\minimize{x\in\HH}{f(x)+g(Lx)}
\end{equation}
together with its Fenchel--Rockafellar dual \cite{Rock67}
\begin{equation}
\label{e:7039d}
\minimize{v^*\in\GG}{f^*\big({-}L^*v^*\big)+g^*(v^*)}.
\end{equation}
The primal-dual pair \eqref{e:7039p}--\eqref{e:7039d} can be
analyzed through the lens of Rockafellar's saddle formalism
\cite{Roc70b,Rock71d} as follows. Set 
$h\colon\HH\oplus\GG\to\RX\colon (x,y)\mapsto f(x)+g(y)$ and 
$U\colon\HH\oplus\GG\to\GG\colon (x,y)\mapsto Lx-y$, and note 
that $U^*\colon\GG\to\HH\oplus\GG\colon v^*\mapsto(L^*v^*,-v^*)$.
Then, upon defining $\KK=\HH\oplus\GG$ and
introducing the variable $z=(x,y)\in\KK$,
\eqref{e:7039p} is equivalent to
\begin{equation}
\label{e:7038p}
\minimize{{z\in\KK,\, Uz=0}}{h(z)}
\end{equation}
and \eqref{e:7039d} to
\begin{equation}
\label{e:7038d}
\minimize{v^*\in\GG}{h^*\big({-}U^*v^*\big)}.
\end{equation}
The Lagrangian associated with \eqref{e:7038p} is
(see \cite[Example~4']{Rock74} or
\cite[Proposition~19.21]{Livre1}) 
\begin{equation}
\label{e:L}
\begin{aligned}
\EuScript{L}\colon\KK\oplus\GG&\to\RX\\
(z,v^*)&\mapsto
\begin{cases}
h(z)+\scal{Uz}{v^*},&\text{if}\;\:z\in\dom h;\\
\pinf,&\text{otherwise},
\end{cases}
\end{aligned}
\end{equation}
and the associated saddle operator \cite{Roc70b,Rock71d}
is the maximally monotone operator
\begin{equation}
\sad\colon\KK\oplus\GG\to 2^{\KK\oplus\GG}\colon(z,v^*)\mapsto
\partial\EuScript{L}(\cdot,v^*)(z)\times
\partial\big({-}\EuScript{L}(z,\cdot)\big)(v^*)
=\big(\partial h(z)+U^*v^*\big)\times\{{-}Uz\}.
\end{equation}
As shown in \cite{Roc70b}, a zero $(\overline{z},\overline{v}^*)$
of $\sad$ is a saddle point of $\EuScript{L}$, and it has the
property that $\overline{z}$ solves \eqref{e:7038p} and
$\overline{v}^*$ solves \eqref{e:7038d}. Thus, going back to the
original Fenchel--Rockafellar pair
\eqref{e:7039p}--\eqref{e:7039d}, we learn that, if
$(\overline{x},\overline{y},\overline{v}^*)$ is a zero of the
saddle operator
\begin{equation}
\sad\colon\HH\oplus\GG\oplus\GG\to 2^{\HH\oplus\GG\oplus\GG}
\colon(x,y,v^*)\mapsto
\big(\partial f(x)+L^*v^*\big)
\times\big(\partial g(y)-v^*\big)
\times\{{-}Lx+y\},
\end{equation}
then $\overline{x}$ solves \eqref{e:7039p} and
$\overline{v}^*$ solves \eqref{e:7039d}.
As shown in \cite[Section~4.5]{Mono18}, a suitable splitting of
$\sad$ leads to an implementable algorithm to solve
\eqref{e:7039p}--\eqref{e:7039d}.

A generalization of Fenchel--Rockafellar duality to
monotone inclusions was proposed in \cite{Penn00,Robi99} and 
further extended in \cite{Siop13}. Given maximally monotone
operators $A\colon\HH\to 2^{\HH}$ and $B\colon\GG\to 2^{\GG}$,
and a bounded linear operator $L\colon\HH\to\GG$, the primal
problem 
\begin{equation}
\label{e:primal}
\text{find}\;\:\overline{x}\in\HH\;\:\text{such that}\;\:
0\in A\overline{x}+L^*\big(B(L\overline{x})\big)
\end{equation}
is paired with the dual problem 
\begin{equation}
\label{e:dual}
\text{find}\;\:\overline{v}^*\in\GG\;\:\text{such that}\;\:
0\in{-}L\big(A^{-1}(-L^*\overline{v}^*)\big)+B^{-1}\overline{v}^*.
\end{equation}
Following the same pattern as that described above,
let us consider the \emph{saddle operator}
\begin{equation}
\label{e:sad1}
\sad\colon\HH\oplus\GG\oplus\GG\to 2^{\HH\oplus\GG\oplus\GG}
\colon(x,y,v^*)\mapsto(Ax+L^*v^*)
\times(By-v^*)\times\{{-}Lx+y\}.
\end{equation}
It is readily shown that, if 
$(\overline{x},\overline{y},\overline{v}^*)$ is a zero of 
$\sad$, then $\overline{x}$ solves \eqref{e:primal} and
$\overline{v}^*$ solves \eqref{e:dual}. We call the problem of
finding a zero of $\sad$ the \emph{saddle form} of 
\eqref{e:primal}--\eqref{e:dual}. We now introduce a saddle
operator for the general Problem~\ref{prob:1}.

\newpage

\begin{definition}
\label{d:S}
In the setting of Problem~\ref{prob:1}, let
$\XXX=\HHH\oplus\GGG\oplus\GGG\oplus\GGG$.
The \emph{saddle operator} associated with Problem~\ref{prob:1} is
\begin{align}
\label{e:saddle}
\sad\colon\XXX\to2^{\XXX}\colon&
(\boldsymbol{x},\boldsymbol{y},\boldsymbol{z},\boldsymbol{v}^*)
\mapsto\nonumber\\
&\hskip -12mm\Bigg(\bigtimes_{i\in I}
\bigg({-}s_i^*+A_ix_i+C_ix_i+Q_ix_i+R_i\boldsymbol{x}
+\sum_{k\in K}L^*_{ki}v^*_k\bigg),
\bigtimes_{k\in K}\big(B_k^{\MM}y_k
+B_k^{\CC}y_k+B_k^{\LL}y_k-v_k^*\big),\nonumber\\
&\hskip -12mm\;\;\bigtimes_{k\in K}\big(D_k^{\MM}z_k
+D_k^{\CC}z_k+D_k^{\LL}z_k-v_k^*\big),\bigtimes_{k\in K}\bigg\{
r_k+y_k+z_k-\sum_{i\in I}L_{ki}x_i\bigg\}~\Bigg),
\end{align}
and the \emph{saddle form} of Problem~\ref{prob:1} is to 
\begin{equation}
\label{e:sf}
\text{find}\;\:\overline{\boldsymbol{\mathsf{x}}}\in\XXX\;\:
\text{such that}\;\:\boldsymbol{\mathsf{0}}\in
\sad\overline{\boldsymbol{\mathsf{x}}}.
\end{equation}
\end{definition}

Next, we establish some properties of the saddle operator as well
as connections with Problem~\ref{prob:1}.

\begin{proposition}
\label{p:6}
Consider the setting of Problem~\ref{prob:1} 
and Definition~\ref{d:S}. Let $\mathscr{P}$ be the set of
solutions to \eqref{e:1p}, let $\mathscr{D}$ be the set of
solutions to \eqref{e:1d}, and let 
\begin{multline}
\label{e:Z}
\mathsf{Z}=\Bigg\{
(\overline{\boldsymbol{x}},\overline{\boldsymbol{v}}^*)\in
\HHH\oplus\GGG ~\bigg|~
(\forall i\in I)(\forall k\in K)\quad
s^*_i-\sum_{j\in K}L_{ji}^*\overline{v}_j^*\in
A_i\overline{x}_i+C_i\overline{x}_i+Q_i\overline{x}_i
+R_i\overline{\boldsymbol{x}}
\;\:\text{and}\\[-1mm]
\sum_{j\in I}L_{kj}\overline{x}_j-r_k\in
\big(B_k^{\MM}+B_k^{\CC}+B_k^{\LL}\big)^{-1}\overline{v}_k^*
+\big(D_k^{\MM}+D_k^{\CC}+D_k^{\LL}\big)^{-1}\overline{v}_k^*
\Bigg\}
\end{multline}
be the associated \emph{Kuhn--Tucker} set.
Then the following hold:
\begin{enumerate}
\item
\label{p:6i}
$\sad$ is maximally monotone.
\item
\label{p:6i+}
$\zer\sad$ is closed and convex.
\item
\label{p:6ii}
Suppose that $\overline{\boldsymbol{\mathsf{x}}}
=(\overline{\boldsymbol{x}},\overline{\boldsymbol{y}},
\overline{\boldsymbol{z}},\overline{\boldsymbol{v}}^*)
\in\zer\sad$. Then $(\overline{\boldsymbol{x}},
\overline{\boldsymbol{v}}^*)\in
\mathsf{Z}\subset\mathscr{P}\times\mathscr{D}$.
\item
\label{p:6iv}
$\mathscr{D}\neq\emp$ $\Leftrightarrow$
$\zer\sad\neq\emp$ $\Leftrightarrow$ $\mathsf{Z}\neq\emp$
$\Rightarrow$ $\mathscr{P}\neq\emp$.
\item
\label{p:6v}
Suppose that one of the following holds:
\begin{enumerate}[label={\rm[\alph*]}]
\item
\label{p:6va}
$I$ is a singleton.
\item
\label{p:6vb}
For every $k\in K$, $(B_k^{\MM}+B_k^{\CC}+B_k^{\LL})
\infconv(D_k^{\MM}+D_k^{\CC}+D_k^{\LL})$ is
at most single-valued.
\item
\label{p:6vc}
For every $k\in K$,
$(D_k^{\MM}+D_k^{\CC}+D_k^{\LL})^{-1}$ is strictly monotone.
\item
\label{p:6vd}
$I\subset K$, the operators
$((B_k^{\MM}+B_k^{\CC}+B_k^{\LL})
\infconv(D_k^{\MM}+D_k^{\CC}+D_k^{\LL}))_{k\in K\smallsetminus I}$
are at most single-valued, and
$(\forall i\in I)(\forall k\in I)$
$k\neq i$ $\Rightarrow$ $L_{ki}=0$.
\end{enumerate}
Then $\mathscr{P}\neq\emp$ $\Rightarrow$ $\mathsf{Z}\neq\emp$.
\end{enumerate}
\end{proposition}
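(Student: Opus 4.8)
The plan is to treat the five items in order, reducing each to a standard fact about maximally monotone operators or to a block-by-block reading of the four components of $\sad$.

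\textbf{Items \ref{p:6i} and \ref{p:6i+}.} I would write $\sad=\boldsymbol{A}+\boldsymbol{B}$, where $\boldsymbol{A}$ retains only the set-valued ingredients, $\boldsymbol{A}\colon(\boldsymbol{x},\boldsymbol{y},\boldsymbol{z},\boldsymbol{v}^*)\mapsto\bigtimes_{i\in I}A_ix_i\times\bigtimes_{k\in K}B_k^{\MM}y_k\times\bigtimes_{k\in K}D_k^{\MM}z_k\times\{\boldsymbol{0}\}$, and $\boldsymbol{B}$ collects the constant shift, the single-valued operators $C_i,Q_i,B_k^{\CC},B_k^{\LL},D_k^{\CC},D_k^{\LL}$ (all Lipschitzian, since cocoercivity implies Lipschitz continuity), the operator $\boldsymbol{R}$ (monotone and Lipschitzian on $\HHH$ by \ref{prob:1e}), and the linear map $\boldsymbol{M}\colon(\boldsymbol{x},\boldsymbol{y},\boldsymbol{z},\boldsymbol{v}^*)\mapsto\big((\sum_{k\in K}L_{ki}^*v_k^*)_{i\in I},(-v_k^*)_{k\in K},(-v_k^*)_{k\in K},(y_k+z_k-\sum_{i\in I}L_{ki}x_i)_{k\in K}\big)$. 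Then $\boldsymbol{A}$ is maximally monotone, being a direct sum of maximally monotone operators (the zero operator on the fourth factor is maximally monotone), and a one-line computation gives $\langle\boldsymbol{M}\boldsymbol{w},\boldsymbol{w}\rangle=0$ for every $\boldsymbol{w}$, so $\boldsymbol{M}$ is bounded, linear, and monotone; hence $\boldsymbol{B}$ is single-valued, monotone, Lipschitzian, and everywhere defined, thus maximally monotone. Therefore $\sad=\boldsymbol{A}+\boldsymbol{B}$ is maximally monotone by the standard sum theorem \cite[Corollary~25.5]{Livre1}, which is \ref{p:6i}; and \ref{p:6i+} is immediate since the set of zeros of a maximally monotone operator is closed and convex.

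\textbf{Items \ref{p:6ii} and \ref{p:6iv}.} Reading $\boldsymbol{\mathsf{0}}\in\sad\overline{\boldsymbol{\mathsf{x}}}$ block by block gives, for all $i\in I$ and $k\in K$: in block~1, $s_i^*-\sum_{k\in K}L_{ki}^*\overline{v}_k^*\in A_i\overline{x}_i+C_i\overline{x}_i+Q_i\overline{x}_i+R_i\overline{\boldsymbol{x}}$; in block~2, $\overline{y}_k\in(B_k^{\MM}+B_k^{\CC}+B_k^{\LL})^{-1}\overline{v}_k^*$; in block~3, $\overline{z}_k\in(D_k^{\MM}+D_k^{\CC}+D_k^{\LL})^{-1}\overline{v}_k^*$; in block~4, $\overline{y}_k+\overline{z}_k=\sum_{i\in I}L_{ki}\overline{x}_i-r_k$. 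Adding the block~2--4 relations yields the second defining relation of $\mathsf{Z}$, which together with block~1 gives $(\overline{\boldsymbol{x}},\overline{\boldsymbol{v}}^*)\in\mathsf{Z}$. To obtain $\mathsf{Z}\subset\mathscr{P}\times\mathscr{D}$, I would use the elementary equivalence $u^*\in(P\infconv Q)(w)\Leftrightarrow w\in P^{-1}u^*+Q^{-1}u^*$ (from $P\infconv Q=(P^{-1}+Q^{-1})^{-1}$) to rewrite the second relation of $\mathsf{Z}$ as $\overline{v}_k^*\in\big((B_k^{\MM}+B_k^{\CC}+B_k^{\LL})\infconv(D_k^{\MM}+D_k^{\CC}+D_k^{\LL})\big)(\sum_{j\in I}L_{kj}\overline{x}_j-r_k)$; substituting this into the first relation yields \eqref{e:1p}, so $\overline{\boldsymbol{x}}\in\mathscr{P}$, and the pair $(\overline{\boldsymbol{x}},\overline{\boldsymbol{v}}^*)$ itself satisfies \eqref{e:1d}, so $\overline{\boldsymbol{v}}^*\in\mathscr{D}$. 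For \ref{p:6iv}: \ref{p:6ii} gives $\zer\sad\neq\emp\Rightarrow\mathsf{Z}\neq\emp$ and, via $\mathsf{Z}\subset\mathscr{P}\times\mathscr{D}$, $\mathsf{Z}\neq\emp\Rightarrow\mathscr{P}\neq\emp$ and $\mathscr{D}\neq\emp$; conversely, given $(\overline{\boldsymbol{x}},\overline{\boldsymbol{v}}^*)\in\mathsf{Z}$ one picks $\overline{y}_k,\overline{z}_k$ in the two inverse images with $\overline{y}_k+\overline{z}_k=\sum_{j\in I}L_{kj}\overline{x}_j-r_k$ to produce a zero of $\sad$, and given $\overline{\boldsymbol{v}}^*\in\mathscr{D}$ one takes the witness $\boldsymbol{x}$ from \eqref{e:1d} and applies the same parallel-sum identity to land in $\mathsf{Z}$; chaining these implications closes all the equivalences.

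\textbf{Item \ref{p:6v}} is the crux. Fix $\overline{\boldsymbol{x}}\in\mathscr{P}$, set $w_k=\sum_{j\in I}L_{kj}\overline{x}_j-r_k$ and $E_k=(B_k^{\MM}+B_k^{\CC}+B_k^{\LL})\infconv(D_k^{\MM}+D_k^{\CC}+D_k^{\LL})$; then \eqref{e:1p} provides, for each $i\in I$, an $a_i^*\in A_i\overline{x}_i$ and a family $(e_k^{(i)})_{k\in K}$ with $e_k^{(i)}\in E_k(w_k)$ such that $s_i^*=a_i^*+C_i\overline{x}_i+Q_i\overline{x}_i+R_i\overline{\boldsymbol{x}}+\sum_{k\in K}L_{ki}^*e_k^{(i)}$. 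The obstacle is precisely that $e_k^{(i)}$ may a priori depend on $i$, whereas the Kuhn--Tucker set demands a single $\overline{v}_k^*\in E_k(w_k)$ serving all equations at once; each of \ref{p:6va}--\ref{p:6vd} removes this dependence. Under \ref{p:6va} there is only one index $i$. Under \ref{p:6vb} each $E_k$ is single-valued. Under \ref{p:6vc}, $(B_k^{\MM}+B_k^{\CC}+B_k^{\LL})^{-1}$ is monotone and $(D_k^{\MM}+D_k^{\CC}+D_k^{\LL})^{-1}$ is strictly monotone, so their sum is strictly monotone and $E_k$, the inverse of a strictly monotone operator, is single-valued, reducing to \ref{p:6vb}. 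Under \ref{p:6vd}, the operators $E_k$ with $k\in K\smallsetminus I$ are single-valued, and for $k\in I$ the hypothesis $L_{ki}=0$ whenever $k\neq i$ in $I$ forces the only $E_k(w_k)$-term with $k\in I$ in the $i$-th equation to be $L_{ii}^*e_i^{(i)}$, so the choice $\overline{v}_i^*=e_i^{(i)}$ for $i\in I$ causes no conflict between equations. In each case, $\overline{\boldsymbol{v}}^*=(\overline{v}_k^*)_{k\in K}$ satisfies the first defining relation of $\mathsf{Z}$ by construction, while $\overline{v}_k^*\in E_k(w_k)$ rewrites, via the parallel-sum identity, as its second relation, so $(\overline{\boldsymbol{x}},\overline{\boldsymbol{v}}^*)\in\mathsf{Z}$. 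I expect the coordinate bookkeeping under \ref{p:6vd} — tracking which dual coordinates enter which primal equation — to be the most delicate step; everything else is routine.
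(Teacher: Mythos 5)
Your proof is correct and follows essentially the same route as the paper: the same additive splitting of $\sad$ into a set-valued maximally monotone part plus a full-domain single-valued monotone Lipschitz (partly skew) part for \ref{p:6i}--\ref{p:6i+}, the same block-by-block reading of $\boldsymbol{\mathsf{0}}\in\sad\overline{\boldsymbol{\mathsf{x}}}$ combined with the identity $B\infconv D=(B^{-1}+D^{-1})^{-1}$ for \ref{p:6ii}--\ref{p:6iv}, and the same case analysis for \ref{p:6v}. The only cosmetic differences are the grouping of the single-valued terms in \ref{p:6i} and your short self-contained argument that \ref{p:6vc} implies \ref{p:6vb}, where the paper simply cites \cite[Section~4]{Siop13}.
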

\begin{proof}
Define
\begin{equation}
\label{e:8757}
\begin{cases}
\boldsymbol{A}\colon\HHH\to 2^{\HHH}
\colon\boldsymbol{x}\mapsto\boldsymbol{R}\boldsymbol{x}
+\bigtimes_{i\in I}\big(A_ix_i+C_ix_i+Q_ix_i\big)\\
\boldsymbol{B}\colon\GGG\to 2^{\GGG}\colon\boldsymbol{y}\mapsto
\bigtimes_{k\in K}\big(B_k^{\MM}y_k+B_k^{\CC}y_k+B_k^{\LL}y_k\big)
\\
\boldsymbol{D}\colon\GGG\to 2^{\GGG}\colon\boldsymbol{z}\mapsto
\bigtimes_{k\in K}
\big(D_k^{\MM}z_k+D_k^{\CC}z_k+D_k^{\LL}z_k\big)\\
\boldsymbol{L}\colon\HHH\to\GGG\colon\boldsymbol{x}\mapsto
\big(\sum_{i\in I}L_{ki}x_i\big)_{k\in K}\\
\boldsymbol{s}^*=(s_i^*)_{i\in I}\;\:\text{and}\;\:
\boldsymbol{r}=(r_k)_{k\in K}.
\end{cases}
\end{equation}
Then the adjoint of $\boldsymbol{L}$ is
\begin{equation}
\label{e:3041}
\boldsymbol{L}^*\colon\GGG\to\HHH\colon
\boldsymbol{v}^*\mapsto
\Bigg(\Sum_{k\in K}L^*_{ki}v_k^*\Bigg)_{i\in I}.
\end{equation}
Hence, in view of \eqref{e:saddle} and \eqref{e:8757},
\begin{equation}
\label{e:3762}
\sad\colon\XXX\to 2^{\XXX}\colon
(\boldsymbol{x},\boldsymbol{y},\boldsymbol{z},\boldsymbol{v}^*)
\mapsto\big({-}\boldsymbol{s}^*+\boldsymbol{A}\boldsymbol{x}
+\boldsymbol{L}^*\boldsymbol{v}^*\big)
\times\big(\boldsymbol{B}\boldsymbol{y}-\boldsymbol{v}^*\big)
\times\big(\boldsymbol{D}\boldsymbol{z}-\boldsymbol{v}^*\big)
\times\big\{\boldsymbol{r}-\boldsymbol{L}\boldsymbol{x}
+\boldsymbol{y}+\boldsymbol{z}\big\}.
\end{equation}

\ref{p:6i}:
Let us introduce the operators
\begin{equation}
\label{e:4425}
\begin{cases}
\boldsymbol{\mathsf{P}}\colon\XXX\to 2^{\XXX}\colon
(\boldsymbol{x},\boldsymbol{y},\boldsymbol{z},\boldsymbol{v}^*)
\mapsto({-}\boldsymbol{s}^*+\boldsymbol{A}\boldsymbol{x})
\times\boldsymbol{B}\boldsymbol{y}\times
\boldsymbol{D}\boldsymbol{z}\times\{\boldsymbol{r}\}\\
\boldsymbol{\mathsf{W}}\colon\XXX\to\XXX\colon
(\boldsymbol{x},\boldsymbol{y},\boldsymbol{z},\boldsymbol{v}^*)
\mapsto(\boldsymbol{L}^*\boldsymbol{v}^*,
{-}\boldsymbol{v}^*,{-}\boldsymbol{v}^*,
{-}\boldsymbol{L}\boldsymbol{x}+\boldsymbol{y}+\boldsymbol{z}).
\end{cases}
\end{equation}
Using Problem~\ref{prob:1}\ref{prob:1a}--\ref{prob:1c},
we derive from
\cite[Example~20.31, Corollaries~20.28 and~25.5(i)]{Livre1} that,
for every $i\in I$ and every $k\in K$, the operators
$A_i+C_i+Q_i$, $B_k^{\MM}+B_k^{\CC}+B_k^{\LL}$,
and $D_k^{\MM}+D_k^{\CC}+D_k^{\LL}$ are maximally monotone.
At the same time, Problem~\ref{prob:1}\ref{prob:1e}
and \cite[Corollary~20.28]{Livre1} entail that $\boldsymbol{R}$
is maximally monotone.
Therefore, it results from \eqref{e:8757}, 
\cite[Proposition~20.23 and Corollary~25.5(i)]{Livre1},
and \eqref{e:4425} that $\boldsymbol{\mathsf{P}}$ is
maximally monotone. However, since
Problem~\ref{prob:1}\ref{prob:1d} and \eqref{e:4425}
imply that $\boldsymbol{\mathsf{W}}$ is linear and bounded
with $\boldsymbol{\mathsf{W}}^*={-}\boldsymbol{\mathsf{W}}$,
\cite[Example~20.35]{Livre1} asserts that
$\boldsymbol{\mathsf{W}}$ is maximally monotone.
Hence, in view of \cite[Corollary~25.5(i)]{Livre1},
we infer from \eqref{e:3762}--\eqref{e:4425} that
$\sad=\boldsymbol{\mathsf{P}}+\boldsymbol{\mathsf{W}}$
is maximally monotone.

\ref{p:6i+}: This follows from \ref{p:6i} and 
\cite[Proposition~23.39]{Livre1}.

\ref{p:6ii}:
Using \eqref{e:8757} and \eqref{e:3041}, we deduce from \eqref{e:Z}
that
\begin{equation}
\label{e:Z1}
\mathsf{Z}=\menge{(\boldsymbol{x},\boldsymbol{v}^*)
\in\HHH\oplus\GGG}{\boldsymbol{s}^*
-\boldsymbol{L}^*\boldsymbol{v}^*\in\boldsymbol{A}\boldsymbol{x}
\;\:\text{and}\;\:\boldsymbol{L}\boldsymbol{x}-\boldsymbol{r}
\in\boldsymbol{B}^{-1}\boldsymbol{v}^*
+\boldsymbol{D}^{-1}\boldsymbol{v}^*}
\end{equation}
and from \eqref{e:1d} that
\begin{equation}
\label{e:1dd}
\mathscr{D}=\menge{\boldsymbol{v}^*\in\GGG}{
{-}\boldsymbol{r}\in{-}\boldsymbol{L}
\big(\boldsymbol{A}^{-1}(\boldsymbol{s}^*
-\boldsymbol{L}^*\boldsymbol{v}^*)\big)
+\boldsymbol{B}^{-1}\boldsymbol{v}^*
+\boldsymbol{D}^{-1}\boldsymbol{v}^*}.
\end{equation}
Suppose that $(\boldsymbol{x},\boldsymbol{v}^*)\in\mathsf{Z}$.
Then it follows from \eqref{e:Z1} that
$\boldsymbol{x}\in\boldsymbol{A}^{-1}
(\boldsymbol{s}^*-\boldsymbol{L}^*\boldsymbol{v}^*)$
and, in turn, that
${-}\boldsymbol{r}\in{-}\boldsymbol{L}\boldsymbol{x}
+\boldsymbol{B}^{-1}\boldsymbol{v}^*
+\boldsymbol{D}^{-1}\boldsymbol{v}^*\subset
{-}\boldsymbol{L}(\boldsymbol{A}^{-1}(\boldsymbol{s}^*
-\boldsymbol{L}^*\boldsymbol{v}^*))
+\boldsymbol{B}^{-1}\boldsymbol{v}^*
+\boldsymbol{D}^{-1}\boldsymbol{v}^*$.
Thus $\boldsymbol{v}^*\in\mathscr{D}$ by \eqref{e:1dd}.
In addition, \eqref{e:Z} implies that
\begin{equation}
(\forall k\in K)\quad v_k^*\in
\big((B_k^{\MM}+B_k^{\CC}+B_k^{\LL})
\infconv(D_k^{\MM}+D_k^{\CC}+D_k^{\LL})\big)\Bigg(
\Sum_{j\in I}L_{kj}x_j-r_k\Bigg)
\end{equation}
and, therefore, that
\begin{align}
(\forall i\in I)\quad s_i^*
&\in A_ix_i+C_ix_i+Q_ix_i+R_i\boldsymbol{x}
+\Sum_{k\in K}L_{ki}^*v_k^*
\nonumber\\
&\subset A_ix_i+C_ix_i+Q_ix_i+R_i\boldsymbol{x}
\nonumber\\
&\quad\;+\Sum_{k\in K}L_{ki}^*\Bigg(\Big(
\big(B_k^{\MM}+B_k^{\CC}+B_k^{\LL}\big)\infconv
\big(D_k^{\MM}+D_k^{\CC}+D_k^{\LL}\big)\Big)
\Bigg(\Sum_{j\in I}L_{kj}x_j-r_k\Bigg)\Bigg).
\end{align}
Hence, $\boldsymbol{x}\in\mathscr{P}$.
To summarize, we have shown that
$\mathsf{Z}\subset\mathscr{P}\times\mathscr{D}$.
It remains to show that
$(\overline{\boldsymbol{x}},\overline{\boldsymbol{v}}^*)
\in\mathsf{Z}$. Since
$\boldsymbol{\mathsf{0}}\in
\sad\overline{\boldsymbol{\mathsf{x}}}$,
we deduce from \eqref{e:3762} that
$\boldsymbol{s}^*-\boldsymbol{L}^*\overline{\boldsymbol{v}}^*
\in\boldsymbol{A}\overline{\boldsymbol{x}}$,
$\boldsymbol{L}\overline{\boldsymbol{x}}-\boldsymbol{r}
=\overline{\boldsymbol{y}}+\overline{\boldsymbol{z}}$,
$\boldsymbol{0}\in
\boldsymbol{B}\overline{\boldsymbol{y}}
-\overline{\boldsymbol{v}}^*$, and
$\boldsymbol{0}\in\boldsymbol{D}\overline{\boldsymbol{z}}
-\overline{\boldsymbol{v}}^*$. Therefore,
$\boldsymbol{L}\overline{\boldsymbol{x}}-\boldsymbol{r}\in
\boldsymbol{B}^{-1}\overline{\boldsymbol{v}}^*
+\boldsymbol{D}^{-1}\overline{\boldsymbol{v}}^*$
and \eqref{e:Z1} thus yields
$(\overline{\boldsymbol{x}},\overline{\boldsymbol{v}}^*)
\in\mathsf{Z}$.

\ref{p:6iv}:
The implication $\zer\sad\neq\emp$ $\Rightarrow$
$\mathscr{P}\neq\emp$ follows from \ref{p:6ii}.
Next, we derive from \eqref{e:1dd} and \eqref{e:Z1} that
\begin{align}
\mathscr{D}\neq\emp&
\Leftrightarrow(\exi\overline{\boldsymbol{v}}^*\in\GGG)\;\;
{-}\boldsymbol{r}\in
{-}\boldsymbol{L}\big(\boldsymbol{A}^{-1}
(\boldsymbol{s}^*-\boldsymbol{L}^*\overline{\boldsymbol{v}}^*)\big)
+\boldsymbol{B}^{-1}\overline{\boldsymbol{v}}^*
+\boldsymbol{D}^{-1}\overline{\boldsymbol{v}}^*
\nonumber\\
&\Leftrightarrow
\big(\exi(\overline{\boldsymbol{v}}^*,
\overline{\boldsymbol{x}})\in\GGG\oplus\HHH\big)
\;\;{-}\boldsymbol{r}\in{-}\boldsymbol{L}\overline{\boldsymbol{x}}
+\boldsymbol{B}^{-1}\overline{\boldsymbol{v}}^*
+\boldsymbol{D}^{-1}\overline{\boldsymbol{v}}^*
\;\:\text{and}\;\:\overline{\boldsymbol{x}}\in
\boldsymbol{A}^{-1}(\boldsymbol{s}^*
-\boldsymbol{L}^*\overline{\boldsymbol{v}}^*)
\nonumber\\
&\Leftrightarrow
\big(\exi(\overline{\boldsymbol{x}},\overline{\boldsymbol{v}}^*)
\in\HHH\oplus\GGG\big)\;\;
\boldsymbol{s}^*-\boldsymbol{L}^*\overline{\boldsymbol{v}}^*\in
\boldsymbol{A}\overline{\boldsymbol{x}}\;\:\text{and}\;\:
\boldsymbol{L}\overline{\boldsymbol{x}}-\boldsymbol{r}\in
\boldsymbol{B}^{-1}\overline{\boldsymbol{v}}^*
+\boldsymbol{D}^{-1}\overline{\boldsymbol{v}}^*
\nonumber\\
&\Leftrightarrow\mathsf{Z}\neq\emp.
\end{align}
However, \ref{p:6ii} asserts that
$\zer\sad\neq\emp$ $\Rightarrow$ $\mathsf{Z}\neq\emp$.
Therefore, it remains to show that $\mathsf{Z}\neq\emp$
$\Rightarrow$ $\zer\sad\neq\emp$.
Towards this end, suppose that
$(\overline{\boldsymbol{x}},\overline{\boldsymbol{v}}^*)
\in\mathsf{Z}$. Then, by \eqref{e:Z1},
$\boldsymbol{s}^*-\boldsymbol{L}^*\overline{\boldsymbol{v}}^*\in
\boldsymbol{A}\overline{\boldsymbol{x}}$ and
$\boldsymbol{L}\overline{\boldsymbol{x}}-\boldsymbol{r}\in
\boldsymbol{B}^{-1}\overline{\boldsymbol{v}}^*
+\boldsymbol{D}^{-1}\overline{\boldsymbol{v}}^*$.
Hence, $\boldsymbol{0}\in{-}\boldsymbol{s}^*
+\boldsymbol{A}\overline{\boldsymbol{x}}
+\boldsymbol{L}^*\overline{\boldsymbol{v}}^*$
and there exists
$(\overline{\boldsymbol{y}},\overline{\boldsymbol{z}})\in
\GGG\oplus\GGG$ such that
$\overline{\boldsymbol{y}}\in
\boldsymbol{B}^{-1}\overline{\boldsymbol{v}}^*$,
$\overline{\boldsymbol{z}}\in
\boldsymbol{D}^{-1}\overline{\boldsymbol{v}}^*$,
and
$\boldsymbol{L}\overline{\boldsymbol{x}}-\boldsymbol{r}
=\overline{\boldsymbol{y}}+\overline{\boldsymbol{z}}$.
We thus deduce that
$\boldsymbol{0}\in\boldsymbol{B}\overline{\boldsymbol{y}}
-\overline{\boldsymbol{v}}^*$,
$\boldsymbol{0}\in\boldsymbol{D}\overline{\boldsymbol{z}}
-\overline{\boldsymbol{v}}^*$,
and $\boldsymbol{r}-\boldsymbol{L}\overline{\boldsymbol{x}}
+\overline{\boldsymbol{y}}+\overline{\boldsymbol{z}}
=\boldsymbol{0}$. Consequently, \eqref{e:3762} implies that
$(\overline{\boldsymbol{x}},\overline{\boldsymbol{y}},
\overline{\boldsymbol{z}},\overline{\boldsymbol{v}}^*)
\in\zer\sad$.

\ref{p:6v}:
In view of \ref{p:6iv}, it suffices to establish
that $\mathscr{P}\neq\emp$ $\Rightarrow$ $\mathscr{D}\neq\emp$.
Suppose that $\overline{\boldsymbol{x}}\in\mathscr{P}$.

\ref{p:6va}:
Suppose that $I=\{1\}$. We then infer from \eqref{e:1p} that
there exists $\overline{\boldsymbol{v}}^*\in\GGG$ such that
\begin{equation}
\begin{cases}
s_1^*\in A_1\overline{x}_1
+C_1\overline{x}_1+Q_1\overline{x}_1
+R_1\overline{\boldsymbol{x}}
+\Sum_{k\in K}L_{k1}^*\overline{v}_k^*\\
(\forall k\in K)\;\;\overline{v}_k^*\in\big(
(B_k^{\MM}+B_k^{\CC}+B_k^{\LL})\infconv
(D_k^{\MM}+D_k^{\CC}+D_k^{\LL})\big)
(L_{k1}\overline{x}_1-r_k).
\end{cases}
\end{equation}
Therefore, by \eqref{e:1d},
$\overline{\boldsymbol{v}}^*\in\mathscr{D}$.

\ref{p:6vb}: Set
$(\forall k\in K)$
$\overline{v}_k^*=(
(B_k^{\MM}+B_k^{\CC}+B_k^{\LL})\infconv
(D_k^{\MM}+D_k^{\CC}+D_k^{\LL}))
(\sum_{j\in I}L_{kj}\overline{x}_j-r_k)$.
Then $\overline{\boldsymbol{v}}^*$ solves \eqref{e:1d}.

\ref{p:6vc}$\Rightarrow$\ref{p:6vb}: See 
\cite[Section~4]{Siop13}.

\ref{p:6vd}:
Let $i\in I$. It results from our assumption that
\begin{multline}
\label{e:7937}
s_i^*\in A_i\overline{x}_i+C_i\overline{x}_i+Q_i\overline{x}_i
+R_i\overline{\boldsymbol{x}}
+L_{ii}^*\Big(\big((B_i^{\MM}+B_i^{\CC}+B_i^{\LL})\infconv
(D_i^{\MM}+D_i^{\CC}+D_i^{\LL})\big)
(L_{ii}\overline{x}_i-r_i)\Big)
\\
+\Sum_{k\in K\smallsetminus I}L_{ki}^*\Bigg(\Big(
\big(B_k^{\MM}+B_k^{\CC}+B_k^{\LL}\big)\infconv
\big(D_k^{\MM}+D_k^{\CC}+D_k^{\LL}\big)\Big)
\Bigg(\Sum_{j\in I}L_{kj}\overline{x}_j-r_k\Bigg)\Bigg).
\end{multline}
Thus, there exists $\overline{v}_i^*\in\GG_i$ such that
$\overline{v}_i^*
\in((B_i^{\MM}+B_i^{\CC}+B_i^{\LL})\infconv
(D_i^{\MM}+D_i^{\CC}+D_i^{\LL}))(L_{ii}\overline{x}_i-r_i)$
and that
\begin{multline}
s_i^*\in A_i\overline{x}_i+C_i\overline{x}_i+Q_i\overline{x}_i
+R_i\overline{\boldsymbol{x}}
+L_{ii}^*\overline{v}_i^*\\
+\Sum_{k\in K\smallsetminus I}L_{ki}^*\Bigg(\Big(
\big(B_k^{\MM}+B_k^{\CC}+B_k^{\LL}\big)\infconv
\big(D_k^{\MM}+D_k^{\CC}+D_k^{\LL}\big)\Big)
\Bigg(\Sum_{j\in I}L_{kj}\overline{x}_j-r_k\Bigg)\Bigg).
\end{multline}
As a result, upon setting
\begin{equation}
(\forall k\in K\smallsetminus I)\quad
\overline{v}_k^*
=\big((B_k^{\MM}+B_k^{\CC}+B_k^{\LL})\infconv
(D_k^{\MM}+D_k^{\CC}+D_k^{\LL})\big)
\Bigg(\Sum_{j\in I}L_{kj}\overline{x}_j-r_k\Bigg),
\end{equation}
we conclude that
$\overline{\boldsymbol{v}}^*\in\mathscr{D}$.
\end{proof}

\begin{remark}
\label{r:7}
Some noteworthy observations about Proposition~\ref{p:6} are the
following.
\begin{enumerate}
\item
The Kuhn--Tucker set \eqref{e:Z} extends to
Problem~\ref{prob:1} the corresponding notion introduced
for some special cases in \cite{Siop14,Siop11,MaPr18}.
\item
In connection with Proposition~\ref{p:6}\ref{p:6v},
we note that the implication $\mathscr{P}\neq\emp$ $\Rightarrow$
$\mathsf{Z}\neq\emp$ is implicitly used in \cite[Theorems~13 and
15]{MaPr18}, where one requires $\mathsf{Z}\neq\emp$ but merely
assumes $\mathscr{P}\neq\emp$. However, this implication is not
true in general (a similar oversight is found in
\cite{Siop14,Pesq15,Bang16}). Indeed, consider as a special case
of \eqref{e:1p}, the problem of solving the system 
\begin{equation}
\begin{cases}
0\in B_1(x_1+x_2)+B_2(x_1-x_2)\\
0\in B_1(x_1+x_2)-B_2(x_1-x_2)
\end{cases}
\end{equation}
in the Euclidean plane $\RR^2$. Then, by choosing
$B_1=\{0\}^{-1}$ and $B_2=1$, we obtain
$\mathscr{P}=\menge{(x_1,-x_1)}{x_1\in\RR}$,
whereas $\mathsf{Z}=\emp$.
\item
\label{r:7iii}
As stated in Proposition~\ref{p:6}\ref{p:6ii}, any Kuhn--Tucker
point is a solution to \eqref{e:1p}--\eqref{e:1d}. In the simpler
setting considered in \cite{MaPr18}, a splitting algorithm was
devised for finding such a point. However, in the more general
context of Problem~\ref{prob:1}, there does not seem to exist a
path from the Kuhn--Tucker formalism in $\HHH\oplus\GGG$ to an
algorithm that is fully split in the sense of \ref{f:1}. This
motivates our approach, which seeks a zero of the saddle operator
$\sad$ defined on the bigger space $\XXX$ and, thereby, offers more
flexibility.
\item
\label{r:7iv}
Special cases of Problem~\ref{prob:1} can be found in
\cite{Siop14,MaPr18,John20,John19}, where they were solved by
algorithms that proceed by outer approximation of the Kuhn--Tucker
set in $\HHH\oplus\GGG$. In those special cases,
Algorithm~\ref{algo:1} below does not reduce to those of
\cite{Siop14,MaPr18,John20,John19} since it operates by outer
approximation of the set of zeros of the saddle operator $\sad$ in
the bigger space $\XXX$.
\end{enumerate}
\end{remark}

The following operators will induce a decomposition of the saddle
operator that will lead to a splitting algorithm
which complies with our requirements \ref{f:1}--\ref{f:5}.

\begin{definition}
\label{d:CM}
In the setting of Definition~\ref{d:S}, set
\begin{align}
\label{e:1179}
\boldsymbol{\mathsf{M}}\colon\XXX\to2^{\XXX}\colon&
(\boldsymbol{x},\boldsymbol{y},\boldsymbol{z},\boldsymbol{v}^*)
\mapsto\nonumber\\
&\Bigg(
\bigtimes_{i\in I}\bigg({-}s_i^*+A_ix_i+Q_ix_i
+R_i\boldsymbol{x}
+\sum_{k\in K}L^*_{ki}v^*_k\bigg),
\bigtimes_{k\in K}\big(B_k^{\MM}y_k
+B_k^{\LL}y_k-v_k^*\big),\nonumber\\
&\;\;\bigtimes_{k\in K}\big(D_k^{\MM}z_k
+D_k^{\LL}z_k-v_k^*\big),\bigtimes_{k\in K}\bigg\{
r_k+y_k+z_k-\sum_{i\in I}L_{ki}x_i\bigg\}~\Bigg)
\end{align}
and
\begin{equation}
\label{e:2796}
\boldsymbol{\mathsf{C}}\colon\XXX\to\XXX\colon
(\boldsymbol{x},\boldsymbol{y},\boldsymbol{z},\boldsymbol{v}^*)
\mapsto\Big(
\big(C_ix_i\big)_{i\in I},\big(B_k^{\CC}y_k\big)_{k\in K},
\big(D_k^{\CC}z_k\big)_{k\in K},\boldsymbol{0}\Big).
\end{equation}
\end{definition}

\begin{proposition}
\label{p:2}
In the setting of Problem~\ref{prob:1} and of
Definitions~\ref{d:S} and \ref{d:CM}, the following hold:
\begin{enumerate}
\item
\label{p:2i-}
$\sad=\boldsymbol{\mathsf{M}}+\boldsymbol{\mathsf{C}}$.
\item
\label{p:2i}
$\boldsymbol{\mathsf{M}}$ is maximally monotone.
\item
\label{p:2ii}
Set $\alpha=\min\{\alpha_i^{\CC},\beta_k^{\CC},
\delta_k^{\CC}\}_{i\in I,k\in K}$.
Then the following hold:
\begin{enumerate}
\item
\label{p:2iia}
$\boldsymbol{\mathsf{C}}$ is $\alpha$-cocoercive.
\item
\label{p:2iib}
Let $(\boldsymbol{\mathsf{p}},\boldsymbol{\mathsf{p}}^*)\in
\gra\boldsymbol{\mathsf{M}}$ and $\boldsymbol{\mathsf{q}}\in\XXX$. 
Then
$\zer\sad\subset\menge{\boldsymbol{\mathsf{x}}\in\XXX}
{\scal{\boldsymbol{\mathsf{x}}-
\boldsymbol{\mathsf{p}}}{\boldsymbol{\mathsf{p}}^*
+\boldsymbol{\mathsf{C}}\boldsymbol{\mathsf{q}}}\leq
(4\alpha)^{-1}\|\boldsymbol{\mathsf{p}}
-\boldsymbol{\mathsf{q}}\|^2}.$
\end{enumerate}
\end{enumerate}
\end{proposition}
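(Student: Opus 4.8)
The plan is to dispatch the four assertions in turn, the first and third amounting to bookkeeping, the second being a transcription of the argument already used for $\sad$ in Proposition~\ref{p:6}\ref{p:6i}, and the fourth being the inequality that will later feed the outer‑approximation cut. For \ref{p:2i-} I would simply compare \eqref{e:saddle}, \eqref{e:1179}, and \eqref{e:2796} block by block: in the first coordinate block $\boldsymbol{\mathsf{M}}$ supplies ${-}s_i^*+A_ix_i+Q_ix_i+R_i\boldsymbol{x}+\sum_{k\in K}L^*_{ki}v^*_k$ and $\boldsymbol{\mathsf{C}}$ supplies $C_ix_i$; in the second block they supply $B_k^{\MM}y_k+B_k^{\LL}y_k-v_k^*$ and $B_k^{\CC}y_k$; in the third $D_k^{\MM}z_k+D_k^{\LL}z_k-v_k^*$ and $D_k^{\CC}z_k$; in the fourth $\{r_k+y_k+z_k-\sum_{i\in I}L_{ki}x_i\}$ and $\boldsymbol{0}$. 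In each block the sum is exactly the corresponding block of $\sad\boldsymbol{\mathsf{x}}$, so $\sad=\boldsymbol{\mathsf{M}}+\boldsymbol{\mathsf{C}}$.

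For \ref{p:2i}, I would reuse the decomposition of the proof of Proposition~\ref{p:6}\ref{p:6i}, writing $\boldsymbol{\mathsf{M}}=\boldsymbol{\mathsf{P}}'+\boldsymbol{\mathsf{W}}$, where $\boldsymbol{\mathsf{W}}$ is the bounded, linear, skew operator of \eqref{e:4425} (unchanged) and $\boldsymbol{\mathsf{P}}'$ is the operator of \eqref{e:4425} built from the modified direct sums $\boldsymbol{A}'\colon\boldsymbol{x}\mapsto\boldsymbol{R}\boldsymbol{x}+\bigtimes_{i\in I}(A_ix_i+Q_ix_i)$, $\boldsymbol{B}'\colon\boldsymbol{y}\mapsto\bigtimes_{k\in K}(B_k^{\MM}y_k+B_k^{\LL}y_k)$, and $\boldsymbol{D}'\colon\boldsymbol{z}\mapsto\bigtimes_{k\in K}(D_k^{\MM}z_k+D_k^{\LL}z_k)$, i.e.\ those of \eqref{e:8757} with the cocoercive summands deleted. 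Since each $Q_i$, $B_k^{\LL}$, $D_k^{\LL}$ is monotone and Lipschitzian, hence maximally monotone and everywhere defined, \cite[Corollary~25.5(i)]{Livre1} shows $A_i+Q_i$, $B_k^{\MM}+B_k^{\LL}$, and $D_k^{\MM}+D_k^{\LL}$ are maximally monotone, and $\boldsymbol{R}$ is maximally monotone by Problem~\ref{prob:1}\ref{prob:1e} and \cite[Corollary~20.28]{Livre1}; then \cite[Proposition~20.23 and Corollary~25.5(i)]{Livre1} give the maximal monotonicity of $\boldsymbol{\mathsf{P}}'$ (translating by the fixed vectors $-\boldsymbol{s}^*$ and $\boldsymbol{r}$ is immaterial), $\boldsymbol{\mathsf{W}}$ is maximally monotone by \cite[Example~20.35]{Livre1}, and a final appeal to \cite[Corollary~25.5(i)]{Livre1} yields that $\boldsymbol{\mathsf{M}}=\boldsymbol{\mathsf{P}}'+\boldsymbol{\mathsf{W}}$ is maximally monotone.

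For \ref{p:2iia}, by Problem~\ref{prob:1}\ref{prob:1a}--\ref{prob:1c} the operator $\boldsymbol{\mathsf{C}}$ is the Hilbert direct sum of $(C_i)_{i\in I}$, $(B_k^{\CC})_{k\in K}$, $(D_k^{\CC})_{k\in K}$ — cocoercive with constants $\alpha_i^{\CC}$, $\beta_k^{\CC}$, $\delta_k^{\CC}$ — and of the zero operator on $\GGG$, which is cocoercive with every constant in $\RPP$; summing the defining inequalities over the blocks and using $\alpha\leq\min\{\alpha_i^{\CC},\beta_k^{\CC},\delta_k^{\CC}\}_{i\in I,k\in K}$ gives $\scal{\boldsymbol{\mathsf{x}}-\boldsymbol{\mathsf{x}}'}{\boldsymbol{\mathsf{C}}\boldsymbol{\mathsf{x}}-\boldsymbol{\mathsf{C}}\boldsymbol{\mathsf{x}}'}\geq\alpha\|\boldsymbol{\mathsf{C}}\boldsymbol{\mathsf{x}}-\boldsymbol{\mathsf{C}}\boldsymbol{\mathsf{x}}'\|^2$. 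For \ref{p:2iib}, fix $\boldsymbol{\mathsf{x}}\in\zer\sad$; by \ref{p:2i-}, ${-}\boldsymbol{\mathsf{C}}\boldsymbol{\mathsf{x}}\in\boldsymbol{\mathsf{M}}\boldsymbol{\mathsf{x}}$, so monotonicity of $\boldsymbol{\mathsf{M}}$ (from \ref{p:2i}) applied to $(\boldsymbol{\mathsf{x}},{-}\boldsymbol{\mathsf{C}}\boldsymbol{\mathsf{x}})$ and $(\boldsymbol{\mathsf{p}},\boldsymbol{\mathsf{p}}^*)$ gives $\sscal{\boldsymbol{\mathsf{x}}-\boldsymbol{\mathsf{p}}}{\boldsymbol{\mathsf{p}}^*+\boldsymbol{\mathsf{C}}\boldsymbol{\mathsf{x}}}\leq 0$. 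Splitting $\boldsymbol{\mathsf{x}}-\boldsymbol{\mathsf{p}}=(\boldsymbol{\mathsf{x}}-\boldsymbol{\mathsf{q}})+(\boldsymbol{\mathsf{q}}-\boldsymbol{\mathsf{p}})$ in the correction term, $\sscal{\boldsymbol{\mathsf{x}}-\boldsymbol{\mathsf{p}}}{\boldsymbol{\mathsf{p}}^*+\boldsymbol{\mathsf{C}}\boldsymbol{\mathsf{q}}}=\sscal{\boldsymbol{\mathsf{x}}-\boldsymbol{\mathsf{p}}}{\boldsymbol{\mathsf{p}}^*+\boldsymbol{\mathsf{C}}\boldsymbol{\mathsf{x}}}+\scal{\boldsymbol{\mathsf{x}}-\boldsymbol{\mathsf{q}}}{\boldsymbol{\mathsf{C}}\boldsymbol{\mathsf{q}}-\boldsymbol{\mathsf{C}}\boldsymbol{\mathsf{x}}}+\scal{\boldsymbol{\mathsf{q}}-\boldsymbol{\mathsf{p}}}{\boldsymbol{\mathsf{C}}\boldsymbol{\mathsf{q}}-\boldsymbol{\mathsf{C}}\boldsymbol{\mathsf{x}}}$; the first summand is $\leq 0$, the second is $\leq-\alpha\|\boldsymbol{\mathsf{C}}\boldsymbol{\mathsf{x}}-\boldsymbol{\mathsf{C}}\boldsymbol{\mathsf{q}}\|^2$ by \ref{p:2iia}, and the third is $\leq(4\alpha)^{-1}\|\boldsymbol{\mathsf{q}}-\boldsymbol{\mathsf{p}}\|^2+\alpha\|\boldsymbol{\mathsf{C}}\boldsymbol{\mathsf{q}}-\boldsymbol{\mathsf{C}}\boldsymbol{\mathsf{x}}\|^2$ by Cauchy--Schwarz and $st\leq(4\alpha)^{-1}s^2+\alpha t^2$; adding them collapses the $\boldsymbol{\mathsf{C}}$‑terms and yields $\sscal{\boldsymbol{\mathsf{x}}-\boldsymbol{\mathsf{p}}}{\boldsymbol{\mathsf{p}}^*+\boldsymbol{\mathsf{C}}\boldsymbol{\mathsf{q}}}\leq(4\alpha)^{-1}\|\boldsymbol{\mathsf{p}}-\boldsymbol{\mathsf{q}}\|^2$.

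The only step demanding genuine care is \ref{p:2i}: one must check that the decomposition $\boldsymbol{\mathsf{M}}=\boldsymbol{\mathsf{P}}'+\boldsymbol{\mathsf{W}}$ and the cited maximal‑monotonicity results apply verbatim once the cocoercive summands are removed (in particular that $Q_i$, $B_k^{\LL}$, $D_k^{\LL}$ are themselves maximally monotone, so each sum $A_i+Q_i$ etc.\ remains maximally monotone), but this is exactly the bookkeeping already performed for $\sad$. The estimate in \ref{p:2iib} is the one that will drive the separating cut of the outer‑approximation algorithm; it presents no conceptual difficulty, only the need to split $\boldsymbol{\mathsf{x}}-\boldsymbol{\mathsf{p}}$ correctly before applying Young's inequality with the weight $\alpha$ that matches the cocoercivity constant.
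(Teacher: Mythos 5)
Your proposal is correct and follows essentially the same route as the paper: blockwise comparison for \ref{p:2i-}, the skew--plus--maximally--monotone decomposition for \ref{p:2i}, blockwise cocoercivity for \ref{p:2iia}, and monotonicity of $\boldsymbol{\mathsf{M}}$ plus a three-term splitting with Young's inequality (the paper completes the square, which is equivalent) for \ref{p:2iib}. The only cosmetic difference is that the paper obtains \ref{p:2i} by invoking Proposition~\ref{p:6}\ref{p:6i} in the special case $C_i=0$, $B_k^{\CC}=D_k^{\CC}=0$, whereas you re-run that decomposition argument directly, which is the same reasoning unrolled.
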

\begin{proof}
\ref{p:2i-}:
Clear from \eqref{e:saddle}, \eqref{e:1179}, and \eqref{e:2796}.

\ref{p:2i}:
This is a special case of Proposition~\ref{p:6}\ref{p:6i},
where, for every $i\in I$ and every $k\in K$,
$C_i=0$ and $B_k^{\CC}=D_k^{\CC}=0$.

\ref{p:2iia}:
Take $\boldsymbol{\mathsf{x}}
=(\boldsymbol{x},\boldsymbol{y},\boldsymbol{z},\boldsymbol{v}^*)$
and
$\boldsymbol{\mathsf{y}}=
(\boldsymbol{a},\boldsymbol{b},\boldsymbol{c},\boldsymbol{w}^*)$
in $\XXX$. By \eqref{e:2796} and
Problem~\ref{prob:1}\ref{prob:1a}--\ref{prob:1c},
\begin{align}
&\scal{\boldsymbol{\mathsf{x}}-\boldsymbol{\mathsf{y}}}
{\boldsymbol{\mathsf{C}}\boldsymbol{\mathsf{x}}
-\boldsymbol{\mathsf{C}}\boldsymbol{\mathsf{y}}}
\nonumber\\
&\hspace{8mm}
=\Sum_{i\in I}\scal{x_i-a_i}{C_ix_i-C_ia_i}
+\Sum_{k\in K}\big(\scal{y_k-b_k}{B_k^{\CC}y_k-B_k^{\CC}b_k}
+\scal{z_k-c_k}{D_k^{\CC}z_k-D_k^{\CC}c_k}\big)
\nonumber\\
&\hspace{8mm}
\geq\Sum_{i\in I}\alpha_i^{\CC}\|C_ix_i-C_ia_i\|^2
+\Sum_{k\in K}\big(\beta_k^{\CC}\|B_k^{\CC}y_k-B_k^{\CC}b_k\|^2
+\delta_k^{\CC}\|D_k^{\CC}z_k-D_k^{\CC}c_k\|^2\big)
\nonumber\\
&\hspace{8mm}
\geq\alpha\Sum_{i\in I}\|C_ix_i-C_ia_i\|^2
+\alpha\Sum_{k\in K}\big(\|B_k^{\CC}y_k-B_k^{\CC}b_k\|^2
+\|D_k^{\CC}z_k-D_k^{\CC}c_k\|^2\big)
\nonumber\\
&\hspace{8mm}
=\alpha\|\boldsymbol{\mathsf{C}}\boldsymbol{\mathsf{x}}
-\boldsymbol{\mathsf{C}}\boldsymbol{\mathsf{y}}\|^2.
\end{align}

\ref{p:2iib}:
Suppose that $\boldsymbol{\mathsf{z}}\in\zer\sad$.
We deduce from \ref{p:2i-} that
${-}\boldsymbol{\mathsf{C}}\boldsymbol{\mathsf{z}}\in
\boldsymbol{\mathsf{M}}\boldsymbol{\mathsf{z}}$
and from our assumption that
$\boldsymbol{\mathsf{p}}^*\in
\boldsymbol{\mathsf{M}}\boldsymbol{\mathsf{p}}$.
Hence, \ref{p:2i} implies that $\scal{\boldsymbol{\mathsf{z}}
-\boldsymbol{\mathsf{p}}}{\boldsymbol{\mathsf{p}}^*
+\boldsymbol{\mathsf{C}}\boldsymbol{\mathsf{z}}}\leq 0$.
Thus, we infer from \ref{p:2iia}
and the Cauchy--Schwarz inequality that
\begin{align}
\scal{\boldsymbol{\mathsf{z}}
-\boldsymbol{\mathsf{p}}}{\boldsymbol{\mathsf{p}}^*
+\boldsymbol{\mathsf{C}}\boldsymbol{\mathsf{q}}}
&=\scal{\boldsymbol{\mathsf{z}}
-\boldsymbol{\mathsf{p}}}{\boldsymbol{\mathsf{p}}^*
+\boldsymbol{\mathsf{C}}\boldsymbol{\mathsf{z}}}
-\scal{\boldsymbol{\mathsf{z}}
-\boldsymbol{\mathsf{q}}}{
\boldsymbol{\mathsf{C}}\boldsymbol{\mathsf{z}}
-\boldsymbol{\mathsf{C}}\boldsymbol{\mathsf{q}}}
+\scal{\boldsymbol{\mathsf{p}}
-\boldsymbol{\mathsf{q}}}{
\boldsymbol{\mathsf{C}}\boldsymbol{\mathsf{z}}
-\boldsymbol{\mathsf{C}}\boldsymbol{\mathsf{q}}}
\nonumber\\
&\leq{-}\alpha\|\boldsymbol{\mathsf{C}}\boldsymbol{\mathsf{z}}
-\boldsymbol{\mathsf{C}}\boldsymbol{\mathsf{q}}\|^2
+\|\boldsymbol{\mathsf{p}}-\boldsymbol{\mathsf{q}}\|\,
\|\boldsymbol{\mathsf{C}}\boldsymbol{\mathsf{z}}
-\boldsymbol{\mathsf{C}}\boldsymbol{\mathsf{q}}\| 
\nonumber\\
&=(4\alpha)^{-1}\|\boldsymbol{\mathsf{p}}
-\boldsymbol{\mathsf{q}}\|^2
-\Big|\big(2\sqrt{\alpha}\big)^{-1}
\|\boldsymbol{\mathsf{p}}-\boldsymbol{\mathsf{q}}\|
-\sqrt{\alpha}\|\boldsymbol{\mathsf{C}}\boldsymbol{\mathsf{z}}
-\boldsymbol{\mathsf{C}}\boldsymbol{\mathsf{q}}\|\Big|^2
\nonumber\\
&\leq(4\alpha)^{-1}\|\boldsymbol{\mathsf{p}}
-\boldsymbol{\mathsf{q}}\|^2,
\end{align}
which establishes the claim.
\end{proof}

Next, we solve the saddle form \eqref{e:sf} of Problem~\ref{prob:1}
via successive projections onto the outer approximations
constructed in Proposition~\ref{p:2}\ref{p:2iib}.

\begin{proposition}
\label{p:3404}
Consider the setting of Problem~\ref{prob:1} and of
Definitions~\ref{d:S} and \ref{d:CM}, and suppose that 
$\zer\sad\neq\emp$. Set $\alpha=\min\{
\alpha_i^{\CC},\beta_k^{\CC},\delta_k^{\CC}\}_{i\in I,k\in K}$,
let $\boldsymbol{\mathsf{x}}_0\in\XXX$,
let $\varepsilon\in\zeroun$, and iterate
\begin{equation}
\label{e:1306}
\begin{array}{l}
\text{for}\;n=0,1,\ldots\\
\left\lfloor
\begin{array}{l}
(\boldsymbol{\mathsf{p}}_n,\boldsymbol{\mathsf{p}}_n^*)
\in\gra\boldsymbol{\mathsf{M}};\;
\boldsymbol{\mathsf{q}}_n\in\XXX;\\
\boldsymbol{\mathsf{t}}_n^*=\boldsymbol{\mathsf{p}}_n^*
+\boldsymbol{\mathsf{C}}\boldsymbol{\mathsf{q}}_n;\\
\Delta_n=\scal{\boldsymbol{\mathsf{x}}_n
-\boldsymbol{\mathsf{p}}_n}{\boldsymbol{\mathsf{t}}_n^*}
-(4\alpha)^{-1}\|\boldsymbol{\mathsf{p}}_n
-\boldsymbol{\mathsf{q}}_n\|^2;\\
\text{if}\;\Delta_n>0\\
\left\lfloor
\begin{array}{l}
\lambda_n\in\left[\varepsilon,2-\varepsilon\right];\\
\boldsymbol{\mathsf{x}}_{n+1}=
\boldsymbol{\mathsf{x}}_n
-(\lambda_n\Delta_n/\|\boldsymbol{\mathsf{t}}_n^*\|^2)
\,\boldsymbol{\mathsf{t}}_n^*;
\end{array}
\right.\\
\text{else}\\
\left\lfloor
\begin{array}{l}
\boldsymbol{\mathsf{x}}_{n+1}=\boldsymbol{\mathsf{x}}_n.
\end{array}
\right.\\[1mm]
\end{array}
\right.
\end{array}
\end{equation}
Then the following hold:
\begin{enumerate}
\item
\label{p:3404i}
$(\forall\boldsymbol{\mathsf{z}}\in\zer\sad)(\forall n\in\NN)$
$\|\boldsymbol{\mathsf{x}}_{n+1}-\boldsymbol{\mathsf{z}}\|
\leq\|\boldsymbol{\mathsf{x}}_n-\boldsymbol{\mathsf{z}}\|$.
\item
\label{p:3404ii}
$\sum_{n\in\NN}\|\boldsymbol{\mathsf{x}}_{n+1}
-\boldsymbol{\mathsf{x}}_n\|^2<\pinf$.
\item
\label{p:3404iii}
Suppose that $(\boldsymbol{\mathsf{t}}_n^*)_{n\in\NN}$ is bounded.
Then $\varlimsup\Delta_n\leq 0$.
\item
\label{p:3404iv}
Suppose that
$\boldsymbol{\mathsf{x}}_n-\boldsymbol{\mathsf{p}}_n\weakly
\boldsymbol{\mathsf{0}}$, $\boldsymbol{\mathsf{p}}_n
-\boldsymbol{\mathsf{q}}_n\to\boldsymbol{\mathsf{0}}$,
and $\boldsymbol{\mathsf{t}}_n^*\to\boldsymbol{\mathsf{0}}$.
Then $(\boldsymbol{\mathsf{x}}_n)_{n\in\NN}$
converges weakly to a point in $\zer\sad$.
\end{enumerate}
\end{proposition}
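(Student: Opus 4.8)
The plan is to recognize \eqref{e:1306} as an instance of the outer-approximation/relaxed-projection scheme. By Proposition~\ref{p:2}\ref{p:2iib}, for every $n\in\NN$ the closed affine half-space
\[
\boldsymbol{\mathsf{H}}_n=\menge{\boldsymbol{\mathsf{x}}\in\XXX}
{\scal{\boldsymbol{\mathsf{x}}-\boldsymbol{\mathsf{p}}_n}
{\boldsymbol{\mathsf{t}}_n^*}\leq(4\alpha)^{-1}
\|\boldsymbol{\mathsf{p}}_n-\boldsymbol{\mathsf{q}}_n\|^2}
\]
contains $\zer\sad$, and $\Delta_n$ is the signed gap of $\boldsymbol{\mathsf{x}}_n$ relative to $\boldsymbol{\mathsf{H}}_n$. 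First I would observe that $\Delta_n>0$ forces $\boldsymbol{\mathsf{t}}_n^*\neq\boldsymbol{\mathsf{0}}$ (otherwise $\Delta_n={-}(4\alpha)^{-1}\|\boldsymbol{\mathsf{p}}_n-\boldsymbol{\mathsf{q}}_n\|^2\leq 0$), so the update is well posed; moreover in that case $\boldsymbol{\mathsf{x}}_n-(\Delta_n/\|\boldsymbol{\mathsf{t}}_n^*\|^2)\boldsymbol{\mathsf{t}}_n^*=\proj_{\boldsymbol{\mathsf{H}}_n}\boldsymbol{\mathsf{x}}_n$ with $\Delta_n/\|\boldsymbol{\mathsf{t}}_n^*\|=\|\boldsymbol{\mathsf{x}}_n-\proj_{\boldsymbol{\mathsf{H}}_n}\boldsymbol{\mathsf{x}}_n\|$, while if $\Delta_n\leq 0$ then $\boldsymbol{\mathsf{x}}_n\in\boldsymbol{\mathsf{H}}_n$ and $\boldsymbol{\mathsf{x}}_{n+1}=\boldsymbol{\mathsf{x}}_n=\proj_{\boldsymbol{\mathsf{H}}_n}\boldsymbol{\mathsf{x}}_n$. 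Thus in all cases $\boldsymbol{\mathsf{x}}_{n+1}=\boldsymbol{\mathsf{x}}_n+\lambda_n(\proj_{\boldsymbol{\mathsf{H}}_n}\boldsymbol{\mathsf{x}}_n-\boldsymbol{\mathsf{x}}_n)$ for some $\lambda_n\in[\varepsilon,2-\varepsilon]$. The standard Fejér inequality for relaxed projections onto a half-space (see \cite[Section~5.1]{Livre1}) then gives, for every $\boldsymbol{\mathsf{z}}\in\boldsymbol{\mathsf{H}}_n\supset\zer\sad$,
\[
\|\boldsymbol{\mathsf{x}}_{n+1}-\boldsymbol{\mathsf{z}}\|^2
\leq\|\boldsymbol{\mathsf{x}}_n-\boldsymbol{\mathsf{z}}\|^2
-\lambda_n(2-\lambda_n)\|\boldsymbol{\mathsf{x}}_n
-\proj_{\boldsymbol{\mathsf{H}}_n}\boldsymbol{\mathsf{x}}_n\|^2,
\]
which yields \ref{p:3404i}, i.e.\ $(\boldsymbol{\mathsf{x}}_n)_{n\in\NN}$ is Fejér monotone with respect to $\zer\sad$ (and, in particular, bounded).

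For \ref{p:3404ii}, I would fix $\boldsymbol{\mathsf{z}}\in\zer\sad$ and telescope the displayed inequality to obtain $\sum_{n\in\NN}\lambda_n(2-\lambda_n)\|\boldsymbol{\mathsf{x}}_n-\proj_{\boldsymbol{\mathsf{H}}_n}\boldsymbol{\mathsf{x}}_n\|^2\leq\|\boldsymbol{\mathsf{x}}_0-\boldsymbol{\mathsf{z}}\|^2<\pinf$; since $\lambda_n(2-\lambda_n)\geq\varepsilon^2$ and $\|\boldsymbol{\mathsf{x}}_{n+1}-\boldsymbol{\mathsf{x}}_n\|=\lambda_n\|\boldsymbol{\mathsf{x}}_n-\proj_{\boldsymbol{\mathsf{H}}_n}\boldsymbol{\mathsf{x}}_n\|\leq 2\|\boldsymbol{\mathsf{x}}_n-\proj_{\boldsymbol{\mathsf{H}}_n}\boldsymbol{\mathsf{x}}_n\|$, summability of $\|\boldsymbol{\mathsf{x}}_{n+1}-\boldsymbol{\mathsf{x}}_n\|^2$ follows, and in particular $\boldsymbol{\mathsf{x}}_{n+1}-\boldsymbol{\mathsf{x}}_n\to\boldsymbol{\mathsf{0}}$. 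For \ref{p:3404iii}, set $\tau=\sup_{n\in\NN}\|\boldsymbol{\mathsf{t}}_n^*\|<\pinf$; if $\tau=0$ then $\Delta_n\leq 0$ for all $n$, and otherwise, for every $n$ with $\Delta_n>0$, $\|\boldsymbol{\mathsf{x}}_{n+1}-\boldsymbol{\mathsf{x}}_n\|=\lambda_n\Delta_n/\|\boldsymbol{\mathsf{t}}_n^*\|\geq\varepsilon\Delta_n/\tau$, so $0<\Delta_n\leq(\tau/\varepsilon)\|\boldsymbol{\mathsf{x}}_{n+1}-\boldsymbol{\mathsf{x}}_n\|$; combined with \ref{p:3404ii} this gives $\varlimsup\Delta_n\leq 0$.

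For \ref{p:3404iv}, since $(\boldsymbol{\mathsf{x}}_n)_{n\in\NN}$ is Fejér monotone with respect to the nonempty closed convex set $\zer\sad$ (Proposition~\ref{p:6}\ref{p:6i+}), by the weak-convergence principle for Fejér-monotone sequences (\cite[Theorem~5.5]{Livre1}) it suffices to show that every weak sequential cluster point of $(\boldsymbol{\mathsf{x}}_n)_{n\in\NN}$ lies in $\zer\sad$. So let $\boldsymbol{\mathsf{x}}_{k_n}\weakly\boldsymbol{\mathsf{x}}$. By the first two hypotheses, $\boldsymbol{\mathsf{p}}_{k_n}\weakly\boldsymbol{\mathsf{x}}$ and $\boldsymbol{\mathsf{q}}_{k_n}\weakly\boldsymbol{\mathsf{x}}$. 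Since $\boldsymbol{\mathsf{p}}_n^*\in\boldsymbol{\mathsf{M}}\boldsymbol{\mathsf{p}}_n$ and $\boldsymbol{\mathsf{p}}_n^*=\boldsymbol{\mathsf{t}}_n^*-\boldsymbol{\mathsf{C}}\boldsymbol{\mathsf{q}}_n$, Proposition~\ref{p:2}\ref{p:2i-} gives
\[
\boldsymbol{\mathsf{t}}_n^*-\boldsymbol{\mathsf{C}}\boldsymbol{\mathsf{q}}_n+\boldsymbol{\mathsf{C}}\boldsymbol{\mathsf{p}}_n
=\boldsymbol{\mathsf{p}}_n^*+\boldsymbol{\mathsf{C}}\boldsymbol{\mathsf{p}}_n
\in(\boldsymbol{\mathsf{M}}+\boldsymbol{\mathsf{C}})\boldsymbol{\mathsf{p}}_n=\sad\boldsymbol{\mathsf{p}}_n.
\]
Now $\boldsymbol{\mathsf{C}}$ is $\alpha$-cocoercive (Proposition~\ref{p:2}\ref{p:2iia}), hence $\alpha^{-1}$-Lipschitzian, so $\|\boldsymbol{\mathsf{C}}\boldsymbol{\mathsf{p}}_{k_n}-\boldsymbol{\mathsf{C}}\boldsymbol{\mathsf{q}}_{k_n}\|\leq\alpha^{-1}\|\boldsymbol{\mathsf{p}}_{k_n}-\boldsymbol{\mathsf{q}}_{k_n}\|\to 0$; together with $\boldsymbol{\mathsf{t}}_n^*\to\boldsymbol{\mathsf{0}}$ this shows that the right-hand side above, evaluated along $(k_n)_{n\in\NN}$, converges strongly to $\boldsymbol{\mathsf{0}}$. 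Since $\sad$ is maximally monotone (Proposition~\ref{p:6}\ref{p:6i}), $\gra\sad$ is sequentially closed for the weak--strong topology on $\XXX\times\XXX$ \cite[Proposition~20.38]{Livre1}; applying this to $(\boldsymbol{\mathsf{p}}_{k_n},\boldsymbol{\mathsf{t}}_{k_n}^*-\boldsymbol{\mathsf{C}}\boldsymbol{\mathsf{q}}_{k_n}+\boldsymbol{\mathsf{C}}\boldsymbol{\mathsf{p}}_{k_n})\in\gra\sad$ yields $\boldsymbol{\mathsf{0}}\in\sad\boldsymbol{\mathsf{x}}$, i.e.\ $\boldsymbol{\mathsf{x}}\in\zer\sad$, completing the proof.

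The main obstacle is step \ref{p:3404iv}: parts \ref{p:3404i}--\ref{p:3404iii} are essentially bookkeeping around the half-space geometry, whereas identifying the weak cluster points requires the correct algebraic rearrangement, namely passing from $\boldsymbol{\mathsf{p}}_n^*\in\boldsymbol{\mathsf{M}}\boldsymbol{\mathsf{p}}_n$ to $\boldsymbol{\mathsf{p}}_n^*+\boldsymbol{\mathsf{C}}\boldsymbol{\mathsf{p}}_n\in\sad\boldsymbol{\mathsf{p}}_n$ and absorbing the discrepancy $\boldsymbol{\mathsf{C}}\boldsymbol{\mathsf{p}}_n-\boldsymbol{\mathsf{C}}\boldsymbol{\mathsf{q}}_n$ — which is controlled only through the Lipschitz continuity of $\boldsymbol{\mathsf{C}}$, not through any weak continuity — so that the weak--strong closedness of $\gra\sad$ can be invoked; one must also not overlook the well-posedness point that $\boldsymbol{\mathsf{t}}_n^*\neq\boldsymbol{\mathsf{0}}$ whenever $\Delta_n>0$.
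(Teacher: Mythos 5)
Your proof is correct and follows essentially the same route as the paper: you recognize \eqref{e:1306} as a relaxed outer-approximation scheme onto half-spaces containing $\zer\sad$ (via Proposition~\ref{p:2}\ref{p:2iib}), obtain \ref{p:3404i}--\ref{p:3404iii} from the Fej\'er inequality and the displacement bound, and prove \ref{p:3404iv} by passing from $\boldsymbol{\mathsf{p}}_n^*\in\boldsymbol{\mathsf{M}}\boldsymbol{\mathsf{p}}_n$ to $\boldsymbol{\mathsf{t}}_n^*-\boldsymbol{\mathsf{C}}\boldsymbol{\mathsf{q}}_n+\boldsymbol{\mathsf{C}}\boldsymbol{\mathsf{p}}_n\in\sad\boldsymbol{\mathsf{p}}_n$, using Lipschitz continuity of $\boldsymbol{\mathsf{C}}$ and the weak--strong closedness of $\gra\sad$, exactly as in the paper. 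The only cosmetic difference is that you rederive the half-space Fej\'er facts and invoke the general Fej\'er-monotone convergence principle directly, where the paper delegates these to Lemma~\ref{l:yeu}.
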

\begin{proof}
\ref{p:3404i}\&\ref{p:3404ii}:
Proposition~\ref{p:6}\ref{p:6i+} and our assumption ensure
that $\zer\sad$ is a nonempty closed convex subset of $\XXX$.
Now, for every $n\in\NN$, set $\eta_n=
(4\alpha)^{-1}\|\boldsymbol{\mathsf{p}}_n
-\boldsymbol{\mathsf{q}}_n\|^2
+\scal{\boldsymbol{\mathsf{p}}_n}{\boldsymbol{\mathsf{t}}_n^*}$
and $\boldsymbol{\mathsf{H}}_n=
\menge{\boldsymbol{\mathsf{x}}\in\XXX}{
\scal{\boldsymbol{\mathsf{x}}}{
\boldsymbol{\mathsf{t}}_n^*}\leq\eta_n}$.
On the one hand, according to Proposition~\ref{p:2}\ref{p:2iib},
$(\forall n\in\NN)$ $\zer\sad\subset\boldsymbol{\mathsf{H}}_n$.
On the other hand, \eqref{e:1306} gives
$(\forall n\in\NN)$ $\Delta_n=\scal{\boldsymbol{\mathsf{x}}_n}{
\boldsymbol{\mathsf{t}}_n^*}-\eta_n$.
Altogether, \eqref{e:1306} is an instantiation of \eqref{e:1972}.
The claims thus follow from
Lemma~\ref{l:yeu}\ref{l:yeui}\&\ref{l:yeuii}.

\ref{p:3404iii}:
Set $\mu=\sup_{n\in\NN}\|\boldsymbol{\mathsf{t}}_n^*\|$.
For every $n\in\NN$, if $\Delta_n>0$, then
\eqref{e:1306} yields $\Delta_n=
\lambda_n^{-1}\|\boldsymbol{\mathsf{t}}_n^*\|\,
\|\boldsymbol{\mathsf{x}}_{n+1}-\boldsymbol{\mathsf{x}}_n\|\leq
\varepsilon^{-1}\mu\|\boldsymbol{\mathsf{x}}_{n+1}
-\boldsymbol{\mathsf{x}}_n\|$;
otherwise, $\Delta_n\leq 0=\varepsilon^{-1}\mu
\|\boldsymbol{\mathsf{x}}_{n+1}-\boldsymbol{\mathsf{x}}_n\|$.
We therefore invoke \ref{p:3404ii} to get
$\varlimsup\Delta_n\leq\lim\varepsilon^{-1}\mu
\|\boldsymbol{\mathsf{x}}_{n+1}-\boldsymbol{\mathsf{x}}_n\|=0$.

\ref{p:3404iv}:
Let $\boldsymbol{\mathsf{x}}\in\XXX$, let $(k_n)_{n\in\NN}$
be a strictly increasing sequence in $\NN$, and suppose that
$\boldsymbol{\mathsf{x}}_{k_n}\weakly\boldsymbol{\mathsf{x}}$.
Then $\boldsymbol{\mathsf{p}}_{k_n}=(\boldsymbol{\mathsf{p}}_{k_n}
-\boldsymbol{\mathsf{x}}_{k_n})+\boldsymbol{\mathsf{x}}_{k_n}
\weakly\boldsymbol{\mathsf{x}}$. In addition,
\eqref{e:1306} and Proposition~\ref{p:2}\ref{p:2i-} imply that
$(\boldsymbol{\mathsf{p}}_{k_n},\boldsymbol{\mathsf{p}}_{k_n}^*
+\boldsymbol{\mathsf{C}}\boldsymbol{\mathsf{p}}_{k_n})_{n\in\NN}$
lies in $\gra(\boldsymbol{\mathsf{M}}+\boldsymbol{\mathsf{C}})
=\gra\sad$. We also note that, since $\boldsymbol{\mathsf{C}}$ is
$(1/\alpha)$-Lipschitzian by
Proposition~\ref{p:2}\ref{p:2iia}, \eqref{e:1306} yields
$\|\boldsymbol{\mathsf{p}}_n^*
+\boldsymbol{\mathsf{C}}\boldsymbol{\mathsf{p}}_n\|
=\|\boldsymbol{\mathsf{t}}_n^*
-\boldsymbol{\mathsf{C}}\boldsymbol{\mathsf{q}}_n
+\boldsymbol{\mathsf{C}}\boldsymbol{\mathsf{p}}_n\|\leq
\|\boldsymbol{\mathsf{t}}_n^*\|
+\|\boldsymbol{\mathsf{C}}\boldsymbol{\mathsf{p}}_n
-\boldsymbol{\mathsf{C}}\boldsymbol{\mathsf{q}}_n\|
\leq\|\boldsymbol{\mathsf{t}}_n^*\|
+\|\boldsymbol{\mathsf{p}}_n
-\boldsymbol{\mathsf{q}}_n\|/\alpha\to 0$.
Altogether, since $\sad$ is maximally monotone
by Proposition~\ref{p:6}\ref{p:6i}, 
\cite[Proposition~20.38(ii)]{Livre1} yields
$\boldsymbol{\mathsf{x}}\in\zer\sad$. In turn,
Lemma~\ref{l:yeu}\ref{l:yeuiii} guarantees that
$(\boldsymbol{\mathsf{x}}_n)_{n\in\NN}$
converges weakly to a point in $\zer\sad$.
\end{proof}

The next outer approximation scheme is a variant of the previous
one that guarantees strong convergence to a specific zero of the
saddle operator.

\begin{proposition}
\label{p:4738}
Consider the setting of Problem~\ref{prob:1} and of
Definitions~\ref{d:S} and \ref{d:CM}, and suppose that 
$\zer\sad\neq\emp$. Define
\begin{multline}
\label{e:6146}
\begin{aligned}
\Xi\colon\RPP\times\RPP\times\RR\times\RR&\to\RR^2\\
(\Delta,\tau,\varsigma,\chi)&\mapsto
\begin{cases}
(1,\Delta/\tau),&\text{if}\;\:\rho=0;\\
\big(0,(\Delta+\chi)/\tau\big),
&\text{if}\;\:\rho\neq 0\;\:\text{and}\;\:
\chi\Delta\geq\rho;\\
\big(1-\chi\Delta/\rho,\varsigma\Delta/\rho\big),
&\text{if}\;\:\rho\neq 0\;\:\text{and}\;\:
\chi\Delta<\rho,
\end{cases}
\end{aligned}
\\
\text{where}\;\:\rho=\tau\varsigma-\chi^2,
\end{multline}
set $\alpha=\min\{\alpha_i^{\CC},\beta_k^{\CC},
\delta_k^{\CC}\}_{i\in I,k\in K}$,
and let $\boldsymbol{\mathsf{x}}_0\in\XXX$. Iterate
\begin{equation}
\label{e:9060}
\begin{array}{l}
\text{for}\;n=0,1,\ldots\\
\left\lfloor
\begin{array}{l}
(\boldsymbol{\mathsf{p}}_n,\boldsymbol{\mathsf{p}}_n^*)\in
\gra\boldsymbol{\mathsf{M}};\;\boldsymbol{\mathsf{q}}_n\in\XXX;\\
\boldsymbol{\mathsf{t}}_n^*=\boldsymbol{\mathsf{p}}_n^*
+\boldsymbol{\mathsf{C}}\boldsymbol{\mathsf{q}}_n;\\
\Delta_n=\scal{\boldsymbol{\mathsf{x}}_n
-\boldsymbol{\mathsf{p}}_n}{
\boldsymbol{\mathsf{t}}_n^*}-(4\alpha)^{-1}
\|\boldsymbol{\mathsf{p}}_n-\boldsymbol{\mathsf{q}}_n\|^2;\\
\text{if}\;\Delta_n>0\\
\left\lfloor
\begin{array}{l}
\tau_n=\|\boldsymbol{\mathsf{t}}_n^*\|^2;\;
\varsigma_n=\|\boldsymbol{\mathsf{x}}_0
-\boldsymbol{\mathsf{x}}_n\|^2;\;
\chi_n=\scal{\boldsymbol{\mathsf{x}}_0
-\boldsymbol{\mathsf{x}}_n}{\boldsymbol{\mathsf{t}}_n^*};\\
(\kappa_n,\lambda_n)=\Xi(\Delta_n,\tau_n,\varsigma_n,\chi_n);\\
\boldsymbol{\mathsf{x}}_{n+1}
=(1-\kappa_n)\boldsymbol{\mathsf{x}}_0
+\kappa_n\boldsymbol{\mathsf{x}}_n
-\lambda_n\boldsymbol{\mathsf{t}}_n^*;
\end{array}
\right.\\
\text{else}\\
\left\lfloor
\begin{array}{l}
\boldsymbol{\mathsf{x}}_{n+1}=\boldsymbol{\mathsf{x}}_n.
\end{array}
\right.\\[1.5mm]
\end{array}
\right.
\end{array}
\end{equation}
Then the following hold:
\begin{enumerate}
\item
\label{p:4738i}
$(\forall n\in\NN)$ $\|\boldsymbol{\mathsf{x}}_n
-\boldsymbol{\mathsf{x}}_0\|
\leq\|\boldsymbol{\mathsf{x}}_{n+1}-\boldsymbol{\mathsf{x}}_0\|
\leq\|\proj_{\zer\sad}\boldsymbol{\mathsf{x}}_0
-\boldsymbol{\mathsf{x}}_0\|$.
\item
\label{p:4738ii}
$\sum_{n\in\NN}\|\boldsymbol{\mathsf{x}}_{n+1}
-\boldsymbol{\mathsf{x}}_n\|^2<\pinf$.
\item
\label{p:4738iii}
Suppose that $(\boldsymbol{\mathsf{t}}_n^*)_{n\in\NN}$ is bounded.
Then $\varlimsup\Delta_n\leq 0$.
\item
\label{p:4738iv}
Suppose that
$\boldsymbol{\mathsf{x}}_n
-\boldsymbol{\mathsf{p}}_n\weakly\boldsymbol{\mathsf{0}}$,
$\boldsymbol{\mathsf{p}}_n
-\boldsymbol{\mathsf{q}}_n\to\boldsymbol{\mathsf{0}}$,
and $\boldsymbol{\mathsf{t}}_n^*\to\boldsymbol{\mathsf{0}}$.
Then $\boldsymbol{\mathsf{x}}_n\to
\proj_{\zer\sad}\boldsymbol{\mathsf{x}}_0$.
\end{enumerate}
\end{proposition}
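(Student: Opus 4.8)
The plan is to follow the proof of Proposition~\ref{p:3404} almost verbatim: the iteration \eqref{e:9060} is built from exactly the same outer approximations of $\zer\sad$ as \eqref{e:1306}, only the relaxed-projection update being replaced by a Haugazeau-type double projection, so parts \ref{p:4738i}--\ref{p:4738iv} will be read off from the abstract scheme of Lemma~\ref{l:yeu}. First I would record the geometry. By Proposition~\ref{p:6}\ref{p:6i+} and the hypothesis $\zer\sad\neq\emp$, the set $\zer\sad$ is nonempty closed convex, so $\proj_{\zer\sad}\boldsymbol{\mathsf{x}}_0$ is well defined. For every $n\in\NN$ put $\eta_n=(4\alpha)^{-1}\|\boldsymbol{\mathsf{p}}_n-\boldsymbol{\mathsf{q}}_n\|^2+\scal{\boldsymbol{\mathsf{p}}_n}{\boldsymbol{\mathsf{t}}_n^*}$ and $\boldsymbol{\mathsf{H}}_n=\menge{\boldsymbol{\mathsf{x}}\in\XXX}{\scal{\boldsymbol{\mathsf{x}}}{\boldsymbol{\mathsf{t}}_n^*}\leq\eta_n}$; since $(\boldsymbol{\mathsf{p}}_n,\boldsymbol{\mathsf{p}}_n^*)\in\gra\boldsymbol{\mathsf{M}}$ and $\boldsymbol{\mathsf{t}}_n^*=\boldsymbol{\mathsf{p}}_n^*+\boldsymbol{\mathsf{C}}\boldsymbol{\mathsf{q}}_n$, Proposition~\ref{p:2}\ref{p:2iib} gives $\zer\sad\subset\boldsymbol{\mathsf{H}}_n$ for every $n$, while by construction $\Delta_n=\scal{\boldsymbol{\mathsf{x}}_n}{\boldsymbol{\mathsf{t}}_n^*}-\eta_n$, so $\Delta_n>0$ precisely when $\boldsymbol{\mathsf{x}}_n\notin\boldsymbol{\mathsf{H}}_n$, in which case $\boldsymbol{\mathsf{t}}_n^*\neq\boldsymbol{\mathsf{0}}$ and hence $\tau_n>0$ and $\Xi$ is evaluated on its domain.

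For parts \ref{p:4738i} and \ref{p:4738ii}, the key observation is that, when $\Delta_n>0$, the update $\boldsymbol{\mathsf{x}}_{n+1}=(1-\kappa_n)\boldsymbol{\mathsf{x}}_0+\kappa_n\boldsymbol{\mathsf{x}}_n-\lambda_n\boldsymbol{\mathsf{t}}_n^*$ with $(\kappa_n,\lambda_n)=\Xi(\Delta_n,\tau_n,\varsigma_n,\chi_n)$ is exactly the projection of $\boldsymbol{\mathsf{x}}_0$ onto $\boldsymbol{\mathsf{H}}_n\cap\menge{\boldsymbol{\mathsf{x}}\in\XXX}{\scal{\boldsymbol{\mathsf{x}}-\boldsymbol{\mathsf{x}}_n}{\boldsymbol{\mathsf{x}}_0-\boldsymbol{\mathsf{x}}_n}\leq 0}$, the three branches of \eqref{e:6146} accounting for the degenerate configuration $\rho=0$ and for the two non-degenerate cases in which the projection lands on a face or on the edge of this intersection. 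Verifying this correspondence, by matching $\tau_n=\|\boldsymbol{\mathsf{t}}_n^*\|^2$, $\varsigma_n=\|\boldsymbol{\mathsf{x}}_0-\boldsymbol{\mathsf{x}}_n\|^2$, and $\chi_n=\scal{\boldsymbol{\mathsf{x}}_0-\boldsymbol{\mathsf{x}}_n}{\boldsymbol{\mathsf{t}}_n^*}$ with the classical three-point Haugazeau formula (note that $\rho=\tau_n\varsigma_n-\chi_n^2\geq 0$ is the associated Gram determinant), is the one computation of substance, and I expect it to be the main obstacle; it is, however, settled once and for all by Lemma~\ref{l:yeu}. Granting it, \eqref{e:9060} is an instance of the Haugazeau-type scheme of Lemma~\ref{l:yeu}: $\boldsymbol{\mathsf{x}}_{n+1}$ lies in the half-space $\menge{\boldsymbol{\mathsf{x}}}{\scal{\boldsymbol{\mathsf{x}}-\boldsymbol{\mathsf{x}}_n}{\boldsymbol{\mathsf{x}}_0-\boldsymbol{\mathsf{x}}_n}\leq 0}$, so Cauchy--Schwarz gives $\|\boldsymbol{\mathsf{x}}_n-\boldsymbol{\mathsf{x}}_0\|\leq\|\boldsymbol{\mathsf{x}}_{n+1}-\boldsymbol{\mathsf{x}}_0\|$; $\zer\sad$ is contained in $\boldsymbol{\mathsf{H}}_n\cap\{\,\cdot\,\}$ for every $n$, whence $\|\boldsymbol{\mathsf{x}}_{n+1}-\boldsymbol{\mathsf{x}}_0\|\leq\|\proj_{\zer\sad}\boldsymbol{\mathsf{x}}_0-\boldsymbol{\mathsf{x}}_0\|$; and a parallelogram-type estimate yields $\sum_n\|\boldsymbol{\mathsf{x}}_{n+1}-\boldsymbol{\mathsf{x}}_n\|^2<\pinf$. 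In the else branch $\boldsymbol{\mathsf{x}}_{n+1}=\boldsymbol{\mathsf{x}}_n$ and all three assertions are trivial, so \ref{p:4738i} and \ref{p:4738ii} follow.

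Part \ref{p:4738iii} requires a small modification of the argument used for Proposition~\ref{p:3404}\ref{p:3404iii}, since here the relaxation $\lambda_n$ need not be bounded below by a positive constant. Instead, when $\Delta_n>0$ one has $\boldsymbol{\mathsf{x}}_{n+1}\in\boldsymbol{\mathsf{H}}_n$, that is, $\scal{\boldsymbol{\mathsf{x}}_{n+1}}{\boldsymbol{\mathsf{t}}_n^*}\leq\eta_n$, hence $\Delta_n=\scal{\boldsymbol{\mathsf{x}}_n}{\boldsymbol{\mathsf{t}}_n^*}-\eta_n\leq\scal{\boldsymbol{\mathsf{x}}_n-\boldsymbol{\mathsf{x}}_{n+1}}{\boldsymbol{\mathsf{t}}_n^*}\leq\|\boldsymbol{\mathsf{x}}_n-\boldsymbol{\mathsf{x}}_{n+1}\|\,\|\boldsymbol{\mathsf{t}}_n^*\|$; when $\Delta_n\leq 0$ the same bound is trivial because $\boldsymbol{\mathsf{x}}_{n+1}=\boldsymbol{\mathsf{x}}_n$. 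With $(\boldsymbol{\mathsf{t}}_n^*)_{n\in\NN}$ bounded and $\|\boldsymbol{\mathsf{x}}_{n+1}-\boldsymbol{\mathsf{x}}_n\|\to 0$ by \ref{p:4738ii}, this yields $\varlimsup\Delta_n\leq 0$.

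Part \ref{p:4738iv}: the weak cluster-point analysis is verbatim that of Proposition~\ref{p:3404}\ref{p:3404iv}. If $\boldsymbol{\mathsf{x}}_{k_n}\weakly\boldsymbol{\mathsf{x}}$ then $\boldsymbol{\mathsf{p}}_{k_n}\weakly\boldsymbol{\mathsf{x}}$ because $\boldsymbol{\mathsf{x}}_n-\boldsymbol{\mathsf{p}}_n\weakly\boldsymbol{\mathsf{0}}$; the pairs $(\boldsymbol{\mathsf{p}}_{k_n},\boldsymbol{\mathsf{p}}_{k_n}^*+\boldsymbol{\mathsf{C}}\boldsymbol{\mathsf{p}}_{k_n})$ lie in $\gra\sad$ by Proposition~\ref{p:2}\ref{p:2i-}; and $\|\boldsymbol{\mathsf{p}}_n^*+\boldsymbol{\mathsf{C}}\boldsymbol{\mathsf{p}}_n\|\leq\|\boldsymbol{\mathsf{t}}_n^*\|+\|\boldsymbol{\mathsf{p}}_n-\boldsymbol{\mathsf{q}}_n\|/\alpha\to 0$ by the $(1/\alpha)$-Lipschitz continuity of $\boldsymbol{\mathsf{C}}$ from Proposition~\ref{p:2}\ref{p:2ii}. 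Since $\sad$ is maximally monotone by Proposition~\ref{p:6}\ref{p:6i}, \cite[Proposition~20.38(ii)]{Livre1} forces $\boldsymbol{\mathsf{x}}\in\zer\sad$. Thus every weak sequential cluster point of $(\boldsymbol{\mathsf{x}}_n)_{n\in\NN}$ lies in $\zer\sad$; combined with \ref{p:4738i} and \ref{p:4738ii}, the strong-convergence clause of Lemma~\ref{l:yeu} for the Haugazeau scheme then yields $\boldsymbol{\mathsf{x}}_n\to\proj_{\zer\sad}\boldsymbol{\mathsf{x}}_0$, which completes the proof.
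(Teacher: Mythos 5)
Your route is the paper's route: exhibit $\zer\sad$ inside the half-spaces $\boldsymbol{\mathsf{H}}_n=\menge{\boldsymbol{\mathsf{x}}\in\XXX}{\scal{\boldsymbol{\mathsf{x}}}{\boldsymbol{\mathsf{t}}_n^*}\leq\eta_n}$ via Proposition~\ref{p:2}\ref{p:2iib}, note $\Delta_n=\scal{\boldsymbol{\mathsf{x}}_n}{\boldsymbol{\mathsf{t}}_n^*}-\eta_n$, recognize \eqref{e:9060} as the Haugazeau-type outer approximation scheme of the Appendix, and run the same weak-cluster-point argument (maximal monotonicity of $\sad$, Lipschitz continuity of $\boldsymbol{\mathsf{C}}$, \cite[Proposition~20.38(ii)]{Livre1}) before invoking the abstract strong-convergence clause. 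The one flaw is that you repeatedly attribute this abstract scheme to Lemma~\ref{l:yeu}: that lemma is the \emph{relaxed projection} scheme \eqref{e:1972} underlying Proposition~\ref{p:3404}, it contains no Haugazeau update and its conclusion is only weak convergence, so as written it can settle neither the identification of the $\Xi$-update with $\proj_{\boldsymbol{\mathsf{H}}_n\cap G_n}\boldsymbol{\mathsf{x}}_0$ nor the claim $\boldsymbol{\mathsf{x}}_n\to\proj_{\zer\sad}\boldsymbol{\mathsf{x}}_0$. The lemma you need is Lemma~\ref{l:manh}, whose iteration \eqref{e:9237} reproduces the three branches of \eqref{e:6146} verbatim (with $\rho_n=\tau_n\varsigma_n-\chi_n^2$), so the "computation of substance" you anticipate is already packaged there and \eqref{e:9060} is an instance of \eqref{e:9237} with no further work; parts \ref{p:4738i}--\ref{p:4738ii} are Lemma~\ref{l:manh}\ref{l:manhi}\&\ref{l:manhii} and part \ref{p:4738iv} follows from Lemma~\ref{l:manh}\ref{l:manhiii}. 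With that substitution your argument is correct; your treatment of \ref{p:4738iii} (using $\boldsymbol{\mathsf{x}}_{n+1}\in\boldsymbol{\mathsf{H}}_n$, Cauchy--Schwarz, and \ref{p:4738ii}) is a legitimate minor variant of the paper's, which instead bounds $\Delta_n\leq\mu\|\proj_{\boldsymbol{\mathsf{H}}_n}\boldsymbol{\mathsf{x}}_n-\boldsymbol{\mathsf{x}}_n\|$ and uses the summability of $\|\proj_{\boldsymbol{\mathsf{H}}_n}\boldsymbol{\mathsf{x}}_n-\boldsymbol{\mathsf{x}}_n\|^2$ from Lemma~\ref{l:manh}\ref{l:manhii}.
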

\begin{proof}
Set $(\forall n\in\NN)$ $\eta_n=(4\alpha)^{-1}
\|\boldsymbol{\mathsf{p}}_n-\boldsymbol{\mathsf{q}}_n\|^2
+\scal{\boldsymbol{\mathsf{p}}_n}{\boldsymbol{\mathsf{t}}_n^*}$
and $\boldsymbol{\mathsf{H}}_n
=\menge{\boldsymbol{\mathsf{x}}\in\XXX}{
\scal{\boldsymbol{\mathsf{x}}}{\boldsymbol{\mathsf{t}}_n^*}
\leq\eta_n}$. As seen in the proof of Proposition~\ref{p:3404},
$\zer\sad$ is a nonempty closed convex subset of $\XXX$
and, for every $n\in\NN$,
$\zer\sad\subset\boldsymbol{\mathsf{H}}_n$ and $\Delta_n
=\scal{\boldsymbol{\mathsf{x}}_n}{\boldsymbol{\mathsf{t}}_n^*}
-\eta_n$.
This and \eqref{e:6146} make \eqref{e:9060}
an instance of \eqref{e:9237}.

\ref{p:4738i}\&\ref{p:4738ii}:
Apply Lemma~\ref{l:manh}\ref{l:manhi}\&\ref{l:manhii}.

\ref{p:4738iii}:
Set $\mu=\sup_{n\in\NN}\|\boldsymbol{\mathsf{t}}_n^*\|$.
Take $n\in\NN$. Suppose that $\Delta_n>0$. Then,
by construction of $\boldsymbol{\mathsf{H}}_n$,
$\proj_{\boldsymbol{\mathsf{H}}_n}\boldsymbol{\mathsf{x}}_n
=\boldsymbol{\mathsf{x}}_n
-(\Delta_n/\|\boldsymbol{\mathsf{t}}_n^*\|^2)
\,\boldsymbol{\mathsf{t}}_n^*$.
This implies that $\Delta_n=\|\boldsymbol{\mathsf{t}}_n^*\|
\,\|\proj_{\boldsymbol{\mathsf{H}}_n}\boldsymbol{\mathsf{x}}_n
-\boldsymbol{\mathsf{x}}_n\|
\leq\mu\|\proj_{\boldsymbol{\mathsf{H}}_n}\boldsymbol{\mathsf{x}}_n
-\boldsymbol{\mathsf{x}}_n\|$. Next, suppose that
$\Delta_n\leq 0$. 
Then $\boldsymbol{\mathsf{x}}_n\in\boldsymbol{\mathsf{H}}_n$
and therefore $\Delta_n\leq
0=\mu\|\proj_{\boldsymbol{\mathsf{H}}_n}\boldsymbol{\mathsf{x}}_n
-\boldsymbol{\mathsf{x}}_n\|$.
Altogether, $(\forall n\in\NN)$
$\Delta_n\leq\mu\|\proj_{\boldsymbol{\mathsf{H}}_n}
\boldsymbol{\mathsf{x}}_n
-\boldsymbol{\mathsf{x}}_n\|$. Consequently,
Lemma~\ref{l:manh}\ref{l:manhii} yields
$\varlimsup\Delta_n\leq 0$.

\ref{p:4738iv}:
Follow the same procedure as in the proof of
Proposition~\ref{p:3404}\ref{p:3404iv},
invoking Lemma~\ref{l:manh}\ref{l:manhiii} instead of
Lemma~\ref{l:yeu}\ref{l:yeuiii}.	
\end{proof}

\section{Asynchronous block-iterative outer approximation methods}
\label{sec:3}

We exploit the saddle
form of Problem~\ref{prob:1} described in Definition~\ref{d:S} to
obtain splitting algorithms with features \ref{f:1}--\ref{f:5}.
Let us comment on the impact of requirements \ref{f:1}--\ref{f:4}.
\begin{itemize} 
\item[\ref{f:1}] 
For every $i\in I$ and every $k\in K$, 
each single-valued operator $C_i$, $Q_i$, $R_i$, $B_k^{\CC}$,
$B_k^{\LL}$, $D_k^{\CC}$, $D_k^{\LL}$, and $L_{ki}$ must be
activated individually via a forward step, whereas each of the
set-valued operators $A_i$, $B_k^{\MM}$, and $D_k^{\MM}$ must be
activated individually via a backward resolvent step. 
\item[\ref{f:2}] 
At iteration $n$, only
operators indexed by subgroups $I_n\subset I$ and $K_n\subset K$ of
indices need to be involved in the sense that the results of their
evaluations are incorporated. This considerably reduces the
computational load compared to standard methods, which require the
use of all the operators at every iteration. Assumption~\ref{a:1}
below regulates the frequency at which the indices should be chosen
over time.
\item[\ref{f:3}]
When an operator is involved at iteration $n$, its evaluation can
be made at a point based on data available at an earlier
iteration. This makes it possible to initiate a computation at a
given iteration and incorporate its result at a later time.
Assumption~\ref{a:3} below controls the lag allowed in 
the process of using past data.
\item[\ref{f:4}]
Assumption~\ref{a:2} below describes the range allowed for the
various scaling parameters in terms of the cocoercivity and
Lipschitz constants of the operators.
\end{itemize}

\begin{assumption}
\label{a:2}
In the setting of Problem~\ref{prob:1}, set
$\alpha=\min\{\alpha_i^{\CC},\beta_k^{\CC},
\delta_k^{\CC}\}_{i\in I,k\in K}$,
let $\sigma\in\RPP$ and $\varepsilon\in\zeroun$ be such that
\begin{equation}
\label{e:a2i'}
\sigma>1/(4\alpha)\quad\text{and}\quad
1/\varepsilon>\max\big\{\alpha_i^{\LL}+\chi+\sigma,
\beta_k^{\LL}+\sigma,\delta_k^{\LL}+\sigma
\big\}_{i\in I, k\in K},
\end{equation}
and suppose that the following are satisfied:
\begin{enumerate}[label={\rm[\alph*]}]
\item
\label{a:2ii}
For every $i\in I$ and every $n\in\NN$, $\gamma_{i,n}\in
\left[\varepsilon,1/(\alpha_i^{\LL}+\chi+\sigma)\right]$.
\item
\label{a:2iii}
For every $k\in K$ and every $n\in\NN$, 
$\mu_{k,n}\in\left[\varepsilon,1/(\beta_k^{\LL}+\sigma)\right]$,
$\nu_{k,n}\in\left[\varepsilon,1/(\delta_k^{\LL}+\sigma)\right]$, 
and $\sigma_{k,n}\in\left[\varepsilon,1/\varepsilon\right]$.
\item
\label{a:2iv}
For every $i\in I$, $x_{i,0}\in\HH_i$; for every $k\in K$,
$\{y_{k,0},z_{k,0},v_{k,0}^*\}\subset\GG_k$.
\end{enumerate}
\end{assumption}

\begin{assumption}
\label{a:1}
$I$ and $K$ are finite sets, $P\in\NN$,
$(I_n)_{n\in\NN}$ are nonempty subsets of $I$,
and $(K_n)_{n\in\NN}$ are nonempty subsets of $K$ such that
\begin{equation}
\label{e:2393}
I_0=I,\quad K_0=K,\quad\text{and}\quad(\forall n\in\NN)\;\;
\bigcup_{j=n}^{n+P}I_j=I\;\:\text{and}\;\:
\bigcup_{j=n}^{n+P}K_j=K.
\end{equation}
\end{assumption}

\begin{assumption}
\label{a:3}
$I$ and $K$ are finite sets, $T\in\NN$, and,
for every $i\in I$ and every $k\in K$, $(\pi_i(n))_{n\in\NN}$
and $(\omega_k(n))_{n\in\NN}$ are sequences in $\NN$ such that
$(\forall n\in\NN)$ $n-T\leq\pi_i(n)\leq n$ and
$n-T\leq\omega_k(n)\leq n$.
\end{assumption}

Our first algorithm is patterned after the abstract
geometric outer approximation principle described in
Proposition~\ref{p:3404}. As before, bold letters denote product
space elements, e.g., $\boldsymbol{x}_n=(x_{i,n})_{i\in I}\in\HHH$.

\begin{algorithm}
\label{algo:1}
Consider the setting of Problem~\ref{prob:1} and suppose that
Assumption~\ref{a:2}--\ref{a:3} is in force. Let
$(\lambda_n)_{n\in\NN}$ be a sequence in
$\left[\varepsilon,2-\varepsilon\right]$ and iterate
\begin{equation}
\label{e:long1}
\begin{array}{l}
\text{for}\;n=0,1,\ldots\\
\left\lfloor
\begin{array}{l}
\text{for every}\;i\in I_n\\
\left\lfloor
\begin{array}{l}
l_{i,n}^*=Q_ix_{i,\pi_i(n)}
+R_i\boldsymbol{x}_{\pi_i(n)}
+\sum_{k\in K}L_{ki}^*v_{k,\pi_i(n)}^*;\\
a_{i,n}=J_{\gamma_{i,\pi_i(n)}A_i}\big(
x_{i,\pi_i(n)}+\gamma_{i,\pi_i(n)}(s_i^*-l_{i,n}^*
-C_ix_{i,\pi_i(n)})\big);\\
a_{i,n}^*=\gamma_{i,\pi_i(n)}^{-1}(x_{i,\pi_i(n)}
-a_{i,n})-l_{i,n}^*+Q_ia_{i,n};\\
\xi_{i,n}=\|a_{i,n}-x_{i,\pi_i(n)}\|^2;
\end{array}
\right.\\
\text{for every}\;i\in I\smallsetminus I_n\\
\left\lfloor
\begin{array}{l}
a_{i,n}=a_{i,n-1};\;a_{i,n}^*=a_{i,n-1}^*;\;
\xi_{i,n}=\xi_{i,n-1};\\
\end{array}
\right.\\
\text{for every}\;k\in K_n\\
\left\lfloor
\begin{array}{l}
u_{k,n}^*=v_{k,\omega_k(n)}^*-B_k^{\LL}y_{k,\omega_k(n)};\\
w_{k,n}^*=v_{k,\omega_k(n)}^*-D_k^{\LL}z_{k,\omega_k(n)};\\
b_{k,n}=J_{\mu_{k,\omega_k(n)}B_k^{\MM}}\big(y_{k,\omega_k(n)}
+\mu_{k,\omega_k(n)}(u_{k,n}^*-B_k^{\CC}y_{k,\omega_k(n)})
\big);\\
d_{k,n}=J_{\nu_{k,\omega_k(n)}D_k^{\MM}}\big(z_{k,\omega_k(n)}
+\nu_{k,\omega_k(n)}(w_{k,n}^*-D_k^{\CC}z_{k,\omega_k(n)})\big);\\
e_{k,n}^*=\sigma_{k,\omega_k(n)}\big(
\sum_{i\in I}L_{ki}x_{i,\omega_k(n)}
-y_{k,\omega_k(n)}-z_{k,\omega_k(n)}-r_k\big)+v_{k,\omega_k(n)}^*
;\\
q_{k,n}^*=\mu_{k,\omega_k(n)}^{-1}(y_{k,\omega_k(n)}-b_{k,n})
+u_{k,n}^*+B_k^{\LL}b_{k,n}-e_{k,n}^*;\\
t_{k,n}^*=\nu_{k,\omega_k(n)}^{-1}(z_{k,\omega_k(n)}-d_{k,n})
+w_{k,n}^*+D_k^{\LL}d_{k,n}-e_{k,n}^*;\\
\eta_{k,n}=\|b_{k,n}-y_{k,\omega_k(n)}\|^2
+\|d_{k,n}-z_{k,\omega_k(n)}\|^2;\\
e_{k,n}=r_k+b_{k,n}+d_{k,n}-\sum_{i\in I}L_{ki}a_{i,n};
\end{array}
\right.\\
\text{for every}\;k\in K\smallsetminus K_n\\
\left\lfloor
\begin{array}{l}
b_{k,n}=b_{k,n-1};\;
d_{k,n}=d_{k,n-1};\;
e_{k,n}^*=e_{k,n-1}^*;\;
q_{k,n}^*=q_{k,n-1}^*;\;
t_{k,n}^*=t_{k,n-1}^*;\\
\eta_{k,n}=\eta_{k,n-1};\;
e_{k,n}=r_k+b_{k,n}+d_{k,n}-\sum_{i\in I}L_{ki}a_{i,n};
\end{array}
\right.\\
\text{for every}\;i\in I\\
\left\lfloor
\begin{array}{l}
p_{i,n}^*=a_{i,n}^*
+R_i\boldsymbol{a}_n
+\sum_{k\in K}L_{ki}^*e_{k,n}^*;
\end{array}
\right.\\
\begin{aligned}
\Delta_n&=\textstyle
{-}(4\alpha)^{-1}\big(\sum_{i\in I}\xi_{i,n}
+\sum_{k\in K}\eta_{k,n}\big)
+\sum_{i\in I}\scal{x_{i,n}-a_{i,n}}{p_{i,n}^*}\\
&\textstyle
\quad\;+\sum_{k\in K}\big(\scal{y_{k,n}-b_{k,n}}{q_{k,n}^*}
+\scal{z_{k,n}-d_{k,n}}{t_{k,n}^*}
+\scal{e_{k,n}}{v_{k,n}^*-e_{k,n}^*}\big);
\end{aligned}\\
\text{if}\;\Delta_n>0\\
\left\lfloor
\begin{array}{l}
\theta_n=\lambda_n\Delta_n/
\big(\sum_{i\in I}\|p_{i,n}^*\|^2+\sum_{k\in K}\big(
\|q_{k,n}^*\|^2+\|t_{k,n}^*\|^2+\|e_{k,n}\|^2\big)\big);\\
\text{for every}\;i\in I\\
\left\lfloor
\begin{array}{l}
x_{i,n+1}=x_{i,n}-\theta_np_{i,n}^*;
\end{array}
\right.\\
\text{for every}\;k\in K\\
\left\lfloor
\begin{array}{l}
y_{k,n+1}=y_{k,n}-\theta_nq_{k,n}^*;\;
z_{k,n+1}=z_{k,n}-\theta_nt_{k,n}^*;\;
v_{k,n+1}^*=v_{k,n}^*-\theta_ne_{k,n};
\end{array}
\right.\\[1mm]
\end{array}
\right.\\
\text{else}\\
\left\lfloor
\begin{array}{l}
\text{for every}\;i\in I\\
\left\lfloor
\begin{array}{l}
x_{i,n+1}=x_{i,n};
\end{array}
\right.\\
\text{for every}\;k\in K\\
\left\lfloor
\begin{array}{l}
y_{k,n+1}=y_{k,n};\;z_{k,n+1}=z_{k,n};\;v_{k,n+1}^*=v_{k,n}^*.
\end{array}
\right.\\[1mm]
\end{array}
\right.\\[9.5mm]
\end{array}
\right.
\end{array}
\end{equation}
\end{algorithm}

The convergence properties of Algorithm~\ref{algo:1} are 
laid out in the following theorem.

\newpage
\begin{theorem}
\label{t:1}
Consider the setting of Algorithm~\ref{algo:1} and suppose that
the dual problem \eqref{e:1d} has a solution. Then the following 
hold:
\begin{enumerate}
\item
\label{t:1i}
Let $i\in I$. Then $\sum_{n\in\NN}\|x_{i,n+1}-x_{i,n}\|^2<\pinf$.
\item
\label{t:1ii}
Let $k\in K$. Then
$\sum_{n\in\NN}\|y_{k,n+1}-y_{k,n}\|^2<\pinf$,
$\sum_{n\in\NN}\|z_{k,n+1}-z_{k,n}\|^2<\pinf$, and
$\sum_{n\in\NN}\|v_{k,n+1}^*-v_{k,n}^*\|^2<\pinf$.
\item
\label{t:1iii-}
Let $i\in I$ and $k\in K$. Then
$x_{i,n}-a_{i,n}\to 0$, $y_{k,n}-b_{k,n}\to 0$,
$z_{k,n}-d_{k,n}\to 0$, and $v_{k,n}^*-e_{k,n}^*\to 0$.
\item
\label{t:1iii}
There exist a solution $\overline{\boldsymbol{x}}$ to
\eqref{e:1p} and a solution $\overline{\boldsymbol{v}}^*$ to
\eqref{e:1d} such that, for every $i\in I$
and every $k\in K$, $x_{i,n}\weakly\overline{x}_i$,
$a_{i,n}\weakly\overline{x}_i$,
and $v_{k,n}^*\weakly\overline{v}_k^*$.
In addition,
$(\overline{\boldsymbol{x}},\overline{\boldsymbol{v}}^*)$ is a
Kuhn--Tucker point of Problem~\ref{prob:1} in the sense of
\eqref{e:Z}.
\end{enumerate}
\end{theorem}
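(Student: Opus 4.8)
The plan is to identify Algorithm~\ref{algo:1} with the abstract outer approximation scheme~\eqref{e:1306} of Proposition~\ref{p:3404}, applied to the decomposition $\sad=\boldsymbol{\mathsf{M}}+\boldsymbol{\mathsf{C}}$ of Proposition~\ref{p:2}\ref{p:2i-}, and then to read off the conclusions via Proposition~\ref{p:6}\ref{p:6ii}. For every $n\in\NN$, set $\boldsymbol{\mathsf{x}}_n=(\boldsymbol{x}_n,\boldsymbol{y}_n,\boldsymbol{z}_n,\boldsymbol{v}_n^*)$, $\boldsymbol{\mathsf{p}}_n=(\boldsymbol{a}_n,\boldsymbol{b}_n,\boldsymbol{d}_n,\boldsymbol{e}_n^*)$, $\boldsymbol{\mathsf{t}}_n^*=\big((p_{i,n}^*)_{i\in I},(q_{k,n}^*)_{k\in K},(t_{k,n}^*)_{k\in K},(e_{k,n})_{k\in K}\big)$, and let $\boldsymbol{\mathsf{q}}_n\in\XXX$ collect in its first three blocks the (effectively) delayed arguments at which the cocoercive operators are evaluated (for $i\in I_n$, $k\in K_n$ these are $x_{i,\pi_i(n)}$, $y_{k,\omega_k(n)}$, $z_{k,\omega_k(n)}$; for the remaining indices they are the values inherited from the last iteration at which the index was active), and $\boldsymbol{e}_n^*$ in its fourth block. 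Because the indices outside $I_n$ and $K_n$ simply carry over their data, for every $i\in I$ the triple $(a_{i,n},a_{i,n}^*,\xi_{i,n})$ equals the one produced at the largest iteration $\le n$ at which $i$ was active --- whose delay $\pi_i$ is then at most $P+T$ by Assumptions~\ref{a:1} and~\ref{a:3} --- and likewise for $k\in K$; using this, the defining identities of the resolvent steps in~\eqref{e:long1}, and the skew form of the linear part of $\sad$ in~\eqref{e:saddle}, one verifies that $(\boldsymbol{\mathsf{p}}_n,\boldsymbol{\mathsf{p}}_n^*)\in\gra\boldsymbol{\mathsf{M}}$ with $\boldsymbol{\mathsf{p}}_n^*=\boldsymbol{\mathsf{t}}_n^*-\boldsymbol{\mathsf{C}}\boldsymbol{\mathsf{q}}_n$, that $\|\boldsymbol{\mathsf{p}}_n-\boldsymbol{\mathsf{q}}_n\|^2=\sum_{i\in I}\xi_{i,n}+\sum_{k\in K}\eta_{k,n}$, and that $\Delta_n$ and the update in~\eqref{e:long1} coincide with those of~\eqref{e:1306}. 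Since~\eqref{e:1d} has a solution, Proposition~\ref{p:6}\ref{p:6iv} gives $\zer\sad\ne\emp$, so Proposition~\ref{p:3404} applies; its parts~\ref{p:3404i} and~\ref{p:3404ii} show that $(\boldsymbol{\mathsf{x}}_n)_{n\in\NN}$ is bounded and $\sum_n\|\boldsymbol{\mathsf{x}}_{n+1}-\boldsymbol{\mathsf{x}}_n\|^2<\pinf$, which already establishes~\ref{t:1i} and~\ref{t:1ii} and yields $\boldsymbol{\mathsf{x}}_{n+1}-\boldsymbol{\mathsf{x}}_n\to\boldsymbol{\mathsf{0}}$.

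Next I would record two auxiliary facts. First, nonexpansiveness of the resolvents, Lipschitz continuity of the single-valued operators, the bounds on the scaling parameters in Assumption~\ref{a:2}, and boundedness of $(\boldsymbol{\mathsf{x}}_n)_{n\in\NN}$ together with the bounded delays of Assumption~\ref{a:3}, show that $(\boldsymbol{\mathsf{p}}_n)_{n\in\NN}$, $(\boldsymbol{\mathsf{q}}_n)_{n\in\NN}$, and $(\boldsymbol{\mathsf{t}}_n^*)_{n\in\NN}$ are bounded, so Proposition~\ref{p:3404}\ref{p:3404iii} gives $\varlimsup\Delta_n\le 0$. Second, since $I$ and $K$ are finite, $\sum_n\|\boldsymbol{\mathsf{x}}_{n+1}-\boldsymbol{\mathsf{x}}_n\|^2<\pinf$ together with Assumptions~\ref{a:1}--\ref{a:3} forces every ``delayed minus current'' discrepancy to vanish, e.g.\ $x_{i,\pi_i(n)}-x_{i,n}\to 0$, $\boldsymbol{x}_{\pi_i(n)}-\boldsymbol{x}_n\to 0$, $v_{k,\pi_i(n)}^*-v_{k,n}^*\to 0$, and the analogues for the $\omega_k$-delays.

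The main obstacle --- and the heart of the proof --- is to upgrade $\varlimsup\Delta_n\le 0$ into the three hypotheses of Proposition~\ref{p:3404}\ref{p:3404iv}. Expanding $\Delta_n$ through the resolvent identities, I would discard the nonnegative terms supplied by monotonicity of $A_i$, $B_k^{\MM}$, $D_k^{\MM}$, $Q_i$, $B_k^{\LL}$, $D_k^{\LL}$, and $\boldsymbol{R}$, use the skew-symmetry of the linear part so that the $L_{ki}$-couplings cancel up to delay errors, dominate the cocoercive forward terms of $C_i$, $B_k^{\CC}$, $D_k^{\CC}$ by the $(4\alpha)^{-1}$ term, and gather all mismatches between current and delayed evaluation points --- each a product of a bounded sequence with a discrepancy that tends to $0$, absorbed by Young's inequality --- into an error $\varepsilon_n\to 0$. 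The scaling bounds~\eqref{e:a2i'} and Assumption~\ref{a:2}\ref{a:2ii}--\ref{a:2iii}, namely $\gamma_{i,n}\le 1/(\alpha_i^{\LL}+\chi+\sigma)$, $\mu_{k,n}\le 1/(\beta_k^{\LL}+\sigma)$, $\nu_{k,n}\le 1/(\delta_k^{\LL}+\sigma)$ with $\sigma>1/(4\alpha)$, together with $\sigma_{k,n}\ge\varepsilon$, are precisely what make the surviving coefficients strictly positive, leaving an estimate of the form $\Delta_n+\varepsilon_n\ge c\big(\sum_{i\in I}\xi_{i,n}+\sum_{k\in K}\eta_{k,n}+\sum_{k\in K}\|e_{k,n}\|^2\big)$ for some $c\in\RPP$ and some $\varepsilon_n\to 0$. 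Taking $\varlimsup$ and using $\varepsilon_n\to 0$ forces $\xi_{i,n}\to 0$, $\eta_{k,n}\to 0$, and $e_{k,n}\to 0$ for the currently active indices, and the carry-over rules together with Assumption~\ref{a:1} propagate these limits to all indices. In view of the delay estimates this gives $\boldsymbol{\mathsf{p}}_n-\boldsymbol{\mathsf{q}}_n\to\boldsymbol{\mathsf{0}}$ and $\boldsymbol{\mathsf{x}}_n-\boldsymbol{\mathsf{p}}_n\to\boldsymbol{\mathsf{0}}$ in the first three blocks; feeding these limits back into the explicit formulas for $p_{i,n}^*$, $q_{k,n}^*$, $t_{k,n}^*$, and $e_{k,n}$ --- each a bounded multiple of a vanishing resolvent increment plus Lipschitz differences of arguments whose difference tends to $0$ --- yields $\boldsymbol{\mathsf{t}}_n^*\to\boldsymbol{\mathsf{0}}$ and, in particular, $v_{k,n}^*-e_{k,n}^*\to 0$, which completes the fourth block of $\boldsymbol{\mathsf{x}}_n-\boldsymbol{\mathsf{p}}_n\to\boldsymbol{\mathsf{0}}$ and hence proves~\ref{t:1iii-}.

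Finally, having verified $\boldsymbol{\mathsf{x}}_n-\boldsymbol{\mathsf{p}}_n\to\boldsymbol{\mathsf{0}}$, $\boldsymbol{\mathsf{p}}_n-\boldsymbol{\mathsf{q}}_n\to\boldsymbol{\mathsf{0}}$, and $\boldsymbol{\mathsf{t}}_n^*\to\boldsymbol{\mathsf{0}}$, Proposition~\ref{p:3404}\ref{p:3404iv} gives $\boldsymbol{\mathsf{x}}_n\weakly\overline{\boldsymbol{\mathsf{x}}}=(\overline{\boldsymbol{x}},\overline{\boldsymbol{y}},\overline{\boldsymbol{z}},\overline{\boldsymbol{v}}^*)\in\zer\sad$. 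Hence $x_{i,n}\weakly\overline{x}_i$ and $v_{k,n}^*\weakly\overline{v}_k^*$ for all $i\in I$ and $k\in K$, while $a_{i,n}=x_{i,n}-(x_{i,n}-a_{i,n})\weakly\overline{x}_i$; and Proposition~\ref{p:6}\ref{p:6ii} shows that $(\overline{\boldsymbol{x}},\overline{\boldsymbol{v}}^*)\in\mathsf{Z}\subset\mathscr{P}\times\mathscr{D}$, so $\overline{\boldsymbol{x}}$ solves~\eqref{e:1p}, $\overline{\boldsymbol{v}}^*$ solves~\eqref{e:1d}, and $(\overline{\boldsymbol{x}},\overline{\boldsymbol{v}}^*)$ is a Kuhn--Tucker point in the sense of~\eqref{e:Z}, which is~\ref{t:1iii}.
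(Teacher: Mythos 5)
Your overall route is the paper's own: you identify \eqref{e:long1} with the abstract scheme \eqref{e:1306} via the product-space vectors of \eqref{e:8870}, obtain \ref{t:1i}--\ref{t:1ii} from Proposition~\ref{p:3404}\ref{p:3404i}--\ref{p:3404ii}, get $\varlimsup\Delta_n\leq 0$ from boundedness of the auxiliary sequences and Proposition~\ref{p:3404}\ref{p:3404iii}, prove a coercive lower bound on $\Delta_n$ modulo vanishing delay errors, and conclude with Proposition~\ref{p:3404}\ref{p:3404iv} and Proposition~\ref{p:6}\ref{p:6ii}. The architecture is sound, and your target inequality $\Delta_n+\varepsilon_n\geq c\big(\sum_{i\in I}\xi_{i,n}+\sum_{k\in K}\eta_{k,n}+\sum_{k\in K}\|e_{k,n}\|^2\big)$ is attainable: it follows from the paper's estimate \eqref{e:1234} together with the identity \eqref{e:2740}, which expresses $e_{k,n}$ in terms of the four residuals, and your subsequent recovery of $v_{k,n}^*-e_{k,n}^*\to 0$ and $\boldsymbol{\mathsf{t}}_n^*\to\boldsymbol{\mathsf{0}}$ is then valid.

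The one point where your justification of the crucial estimate is miscast is the treatment of $Q_i$, $B_k^{\LL}$, $D_k^{\LL}$, and $\boldsymbol{R}$. In the expansion of $\Delta_n$ these operators enter through terms of the form $-\scal{x_{i,n}-a_{i,n}}{Q_ix_{i,\cdot}-Q_ia_{i,n}}$ (and analogously for the others), which are \emph{nonpositive} by monotonicity; they cannot be ``discarded'' when one seeks a lower bound. They must be dominated via their Lipschitz constants --- this is exactly Lemma~\ref{l:2156}, i.e., the $\sigma$- (resp.\ $(\chi+\sigma)$-) strong monotonicity of $\mu_{k,\cdot}^{-1}\Id-B_k^{\LL}$, $\nu_{k,\cdot}^{-1}\Id-D_k^{\LL}$, $\gamma_{i,\cdot}^{-1}\Id-Q_i$, together with the $-\chi\|\boldsymbol{x}_n-\boldsymbol{a}_n\|^2$ concession for $\boldsymbol{R}$ --- and it is precisely why Assumption~\ref{a:2} caps the proximal parameters at $1/(\alpha_i^{\LL}+\chi+\sigma)$, $1/(\beta_k^{\LL}+\sigma)$, $1/(\delta_k^{\LL}+\sigma)$. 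Monotonicity of $Q_i$, $B_k^{\LL}$, $D_k^{\LL}$, $\boldsymbol{R}$ (and of $A_i$, $B_k^{\MM}$, $D_k^{\MM}$) is used elsewhere, namely to make $\boldsymbol{\mathsf{M}}$ and $\sad$ maximally monotone so that Propositions~\ref{p:2} and~\ref{p:3404} apply at all. Also, the strictly positive coefficient on $\|v_{k,n}^*-e_{k,n}^*\|^2$ comes from the upper bound $\sigma_{k,n}\leq 1/\varepsilon$ (so that $\sigma_{k,n}^{-1}\geq\varepsilon$), not from $\sigma_{k,n}\geq\varepsilon$ as you state. With these corrections, your sketch coincides with the paper's proof.
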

\begin{proof}
We use the notation of Definitions~\ref{d:S} and \ref{d:CM}. 
We first observe that $\zer\sad\neq\emp$ by virtue of
Proposition~\ref{p:6}\ref{p:6iv}. Next, let us verify that
\eqref{e:long1} is a special case of \eqref{e:1306}.
For every $i\in I$, denote by $\overline{\vartheta}_i(n)$
the most recent iteration preceding an iteration $n$ at which the
results of the evaluations of the operators
$A_i$, $C_i$, $Q_i$, and $R_i$ were incorporated,
and by $\vartheta_i(n)$ the iteration at which the
corresponding calculations were initiated, i.e.,
\begin{equation}
\label{e:9214}
\overline{\vartheta}_i(n)=\max\menge{j\in\NN}{j\leq n
\;\:\text{and}\;\:i\in I_j}\quad\text{and}\quad
\vartheta_i(n)=\pi_i\big(\overline{\vartheta}_i(n)\big).
\end{equation}
Similarly, we define
\begin{equation}
\label{e:3372}
(\forall k\in K)(\forall n\in\NN)\quad
\overline{\varrho}_k(n)=\max\menge{j\in\NN}{j\leq n
\;\:\text{and}\;\:k\in K_j}\quad\text{and}\quad
\varrho_k(n)=\omega_k\big(\overline{\varrho}_k(n)\big).
\end{equation}
By virtue of \eqref{e:long1},
\begin{equation}
\label{e:4173}
(\forall i\in I)(\forall n\in\NN)\quad
a_{i,n}=a_{i,\overline{\vartheta}_i(n)},\quad
a_{i,n}^*=a_{i,\overline{\vartheta}_i(n)}^*,\quad
\xi_{i,n}=\xi_{i,\overline{\vartheta}_i(n)},
\end{equation}
and likewise
\begin{equation}
\label{e:6609}
(\forall k\in K)(\forall n\in\NN)\quad
\begin{cases}
b_{k,n}=b_{k,\overline{\varrho}_k(n)},\;\:
d_{k,n}=d_{k,\overline{\varrho}_k(n)},\;\:
\eta_{k,n}=\eta_{k,\overline{\varrho}_k(n)}\\
e_{k,n}^*=e_{k,\overline{\varrho}_k(n)}^*,\;\:
q_{k,n}^*=q_{k,\overline{\varrho}_k(n)}^*,\;\:
t_{k,n}^*=t_{k,\overline{\varrho}_k(n)}^*.
\end{cases}
\end{equation}
To proceed further, set
\begin{equation}
\label{e:8870}
(\forall n\in\NN)\quad
\begin{cases}
\boldsymbol{\mathsf{x}}_n=(\boldsymbol{x}_n,
\boldsymbol{y}_n,\boldsymbol{z}_n,\boldsymbol{v}_n^*)\\
\boldsymbol{\mathsf{p}}_n=(\boldsymbol{a}_n,
\boldsymbol{b}_n,\boldsymbol{d}_n,\boldsymbol{e}_n^*)\\
\boldsymbol{\mathsf{p}}_n^*=\big(
\boldsymbol{p}_n^*-(C_ix_{i,\vartheta_i(n)})_{i\in I},
\boldsymbol{q}_n^*-(B_k^{\CC}y_{k,\varrho_k(n)})_{k\in K},
\boldsymbol{t}_n^*-(D_k^{\CC}z_{k,\varrho_k(n)})_{k\in K},
\boldsymbol{e}_n\big)\\
\boldsymbol{\mathsf{q}}_n=
\big((x_{i,\vartheta_i(n)})_{i\in I},
(y_{k,\varrho_k(n)})_{k\in K},(z_{k,\varrho_k(n)})_{k\in K},
(e_{k,n}^*)_{k\in K}\big)\\
\boldsymbol{\mathsf{t}}_n^*=(
\boldsymbol{p}_n^*,\boldsymbol{q}_n^*,\boldsymbol{t}_n^*,
\boldsymbol{e}_n).
\end{cases}
\end{equation}
For every $i\in I$ and every $n\in\NN$,
it follows from \eqref{e:4173}, \eqref{e:9214}, \eqref{e:long1},
and \cite[Proposition~23.2(ii)]{Livre1} that
\begin{align}
\label{e:6869}
a_{i,n}^*-C_ix_{i,\vartheta_i(n)}
&=a_{i,\overline{\vartheta}_i(n)}^*
-C_ix_{i,\pi_i(\overline{\vartheta}_i(n))}
\nonumber\\
&=\gamma_{i,\pi_i(\overline{\vartheta}_i(n))}^{-1}
\big(x_{i,\pi_i(\overline{\vartheta}_i(n))}
-a_{i,\overline{\vartheta}_i(n)}\big)
-l_{i,\overline{\vartheta}_i(n)}^*
-C_ix_{i,\pi_i(\overline{\vartheta}_i(n))}
+Q_ia_{i,\overline{\vartheta}_i(n)}
\nonumber\\
&\in{-}s_i^*+A_ia_{i,\overline{\vartheta}_i(n)}
+Q_ia_{i,\overline{\vartheta}_i(n)}
\nonumber\\
&={-}s_i^*+A_ia_{i,n}+Q_ia_{i,n}
\end{align}
and, therefore, that
\begin{align}
\label{e:9073}
p_{i,n}^*-C_ix_{i,\vartheta_i(n)}
&=a_{i,n}^*-C_ix_{i,\vartheta_i(n)}
+R_i\boldsymbol{a}_n
+\sum_{k\in K}L_{ki}^*e_{k,n}^*
\nonumber\\
&\in{-}s_i^*+A_ia_{i,n}+Q_ia_{i,n}
+R_i\boldsymbol{a}_n+\sum_{k\in K}L_{ki}^*e_{k,n}^*.
\end{align}
Analogously, we invoke
\eqref{e:6609}, \eqref{e:3372}, and \eqref{e:long1} to obtain
\begin{equation}
\label{e:3549}
(\forall k\in K)(\forall n\in\NN)\quad
q_{k,n}^*-B_k^{\CC}y_{k,\varrho_k(n)}
\in B_k^{\MM}b_{k,n}+B_k^{\LL}b_{k,n}-e_{k,n}^*
\end{equation}
and
\begin{equation}
\label{e:3548}
(\forall k\in K)(\forall n\in\NN)\quad
t_{k,n}^*-D_k^{\CC}z_{k,\varrho_k(n)}
\in D_k^{\MM}d_{k,n}+D_k^{\LL}d_{k,n}-e_{k,n}^*.
\end{equation}
In addition, \eqref{e:long1} states that
\begin{equation}
\label{e:6107}
(\forall k\in K)(\forall n\in\NN)\quad
e_{k,n}=r_k+b_{k,n}+d_{k,n}-\sum_{i\in I}L_{ki}a_{i,n}.
\end{equation}
Hence, using \eqref{e:8870} and \eqref{e:1179}, we deduce that
$(\boldsymbol{\mathsf{p}}_n,
\boldsymbol{\mathsf{p}}_n^*)_{n\in\NN}$ lies in
$\gra\boldsymbol{\mathsf{M}}$.
Next, it results from \eqref{e:8870} and \eqref{e:2796}
that $(\forall n\in\NN)$
$\boldsymbol{\mathsf{t}}_n^*=\boldsymbol{\mathsf{p}}_n^*
+\boldsymbol{\mathsf{C}}\boldsymbol{\mathsf{q}}_n$. Moreover,
for every $n\in\NN$, \eqref{e:long1}--\eqref{e:8870} entail that
\begin{align}
\label{e:4376}
&\Sum_{i\in I}\xi_{i,n}
+\Sum_{k\in K}\eta_{k,n}
\nonumber\\
&\hspace{6mm}
=\Sum_{i\in I}\xi_{i,\overline{\vartheta}_i(n)}
+\Sum_{k\in K}\eta_{k,\overline{\varrho}_k(n)}
\nonumber\\
&\hspace{6mm}
=\Sum_{i\in I}\big\|a_{i,\overline{\vartheta}_i(n)}
-x_{i,\pi_i(\overline{\vartheta}_i(n))}\big\|^2
+\Sum_{k\in K}\Big(\big\|b_{k,\overline{\varrho}_k(n)}
-y_{k,\omega_k(\overline{\varrho}_k(n))}\big\|^2
+\big\|d_{k,\overline{\varrho}_k(n)}
-z_{k,\omega_k(\overline{\varrho}_k(n))}\big\|^2\Big)
\nonumber\\
&\hspace{6mm}
=\Sum_{i\in I}\big\|a_{i,n}-x_{i,\vartheta_i(n)}\big\|^2
+\Sum_{k\in K}\Big(\big\|b_{k,n}-y_{k,\varrho_k(n)}\big\|^2
+\big\|d_{k,n}-z_{k,\varrho_k(n)}\big\|^2\Big)
\nonumber\\
&\hspace{6mm}
=\|\boldsymbol{\mathsf p}_n-\boldsymbol{\mathsf q}_n\|^2
\end{align}
and, in turn, that
\begin{equation}
\label{e:8350}
\Delta_n=\scal{\boldsymbol{\mathsf x}_n
-\boldsymbol{\mathsf p}_n}{
\boldsymbol{\mathsf t}_n^*}-(4\alpha)^{-1}\|\boldsymbol{\mathsf p}_n
-\boldsymbol{\mathsf q}_n\|^2.
\end{equation}
To sum up, \eqref{e:long1} is an instantiation of \eqref{e:1306}.
Therefore, Proposition~\ref{p:3404}\ref{p:3404ii} asserts that
\begin{equation}
\label{e:4932}
\sum_{n\in\NN}\|\boldsymbol{\mathsf x}_{n+1}
-\boldsymbol{\mathsf x}_n\|^2<\pinf.
\end{equation}

\ref{t:1i}\&\ref{t:1ii}:
These follow from \eqref{e:4932} and \eqref{e:8870}.

\ref{t:1iii-}\&\ref{t:1iii}:
Proposition~\ref{p:3404}\ref{p:3404i} implies that
$(\boldsymbol{\mathsf{x}}_n)_{n\in\NN}$ is bounded.
It therefore results from \eqref{e:8870} that
\begin{equation}
\label{e:3218}
(\boldsymbol{x}_n)_{n\in\NN},\;\:
(\boldsymbol{y}_n)_{n\in\NN},\;\:
(\boldsymbol{z}_n)_{n\in\NN},\;\:\text{and}\;\:
(\boldsymbol{v}_n^*)_{n\in\NN}\;\:\text{are bounded}.
\end{equation}
Hence, \eqref{e:6609}, \eqref{e:long1}, \eqref{e:3372},
and Assumption~\ref{a:2}\ref{a:2iii} ensure that
\begin{equation}
\label{e:2095}
(\forall k\in K)\quad
(e_{k,n}^*)_{n\in\NN}=\Bigg(
\sigma_{k,\varrho_k(n)}\bigg(\sum_{i\in I}L_{ki}x_{i,\varrho_k(n)}
-y_{k,\varrho_k(n)}-z_{k,\varrho_k(n)}-r_k\bigg)
+v_{k,\varrho_k(n)}^*\Bigg)_{n\in\NN}\;\:\text{is bounded}.
\end{equation}
Next, we deduce from \eqref{e:3218} and 
Problem~\ref{prob:1}\ref{prob:1e} that
\begin{equation}
(\forall i\in I)\quad\big(
R_i\boldsymbol{x}_{\vartheta_i(n)}\big)_{n\in\NN}\;\:
\text{is bounded}.
\end{equation}
In turn, it follows from \eqref{e:long1}, \eqref{e:3218},
the fact that $(Q_i)_{i\in I}$ and
$(C_i)_{i\in I}$ are Lipschitzian,
and Assumption~\ref{a:2}\ref{a:2ii} that
\begin{equation}
(\forall i\in I)\quad
\Big(x_{i,\vartheta_i(n)}+\gamma_{i,\vartheta_i(n)}
\big(s_i^*-l_{i,\overline{\vartheta}_i(n)}^*
-C_ix_{i,\vartheta_i(n)}\big)\Big)_{n\in\NN}
\;\:\text{is bounded}.
\end{equation}
An inspection of \eqref{e:4173}, \eqref{e:long1},
\eqref{e:9214}, and Lemma~\ref{l:3945} reveals that
\begin{equation}
\label{e:7947}
(\forall i\in I)\quad(a_{i,n})_{n\in\NN}=
\Big(J_{\gamma_{i,\vartheta_i(n)}A_i}
\big(x_{i,\vartheta_i(n)}+\gamma_{i,\vartheta_i(n)}\big(
s_i^*-l_{i,\overline{\vartheta}_i(n)}^*
-C_ix_{i,\vartheta_i(n)}\big)\big)\Big)_{n\in\NN}
\;\:\text{is bounded}.
\end{equation}
Hence, we infer from \eqref{e:4173}, \eqref{e:long1},
\eqref{e:3218}, and Assumption~\ref{a:2}\ref{a:2ii} that
\begin{equation}
\label{e:1633}
(\forall i\in I)\quad(a_{i,n}^*)_{n\in\NN}\;\:
\text{is bounded}.
\end{equation}
Accordingly, by \eqref{e:long1}, \eqref{e:3218},
and Assumption~\ref{a:2}\ref{a:2iii}, 
\begin{equation}
(\forall k\in K)\quad
\Big(y_{k,\varrho_k(n)}+\mu_{k,\varrho_k(n)}\big(
u_{k,\overline{\varrho}_k(n)}^*-B_k^{\CC}y_{k,\varrho_k(n)}\big)
\Big)_{n\in\NN}\;\:\text{is bounded}.
\end{equation}
Therefore, \eqref{e:6609}, \eqref{e:long1}, \eqref{e:3372},
and Lemma~\ref{l:3945} imply that
\begin{equation}
\label{e:2917}
(\forall k\in K)\quad
(b_{k,n})_{n\in\NN}=\Big(J_{\mu_{k,\varrho_k(n)}B_k^{\MM}}\big(
y_{k,\varrho_k(n)}+\mu_{k,\varrho_k(n)}\big(
u_{k,\overline{\varrho}_k(n)}^*-B_k^{\CC}y_{k,\varrho_k(n)}\big)
\big)\Big)_{n\in\NN}\;\:\text{is bounded}.
\end{equation}
Thus, \eqref{e:6609}, \eqref{e:long1}, \eqref{e:3218},
\eqref{e:2095}, and Assumption~\ref{a:2}\ref{a:2iii} yield
\begin{equation}
\label{e:9429}
(\boldsymbol{q}_n^*)_{n\in\NN}\;\:\text{is bounded}.
\end{equation}
Likewise,
\begin{equation}
\label{e:3619}
(\boldsymbol{d}_n)_{n\in\NN}\;\:\text{and}\;\:
(\boldsymbol{t}_n^*)_{n\in\NN}\;\:\text{are bounded}.
\end{equation}
We deduce from \eqref{e:6107}, \eqref{e:2917}, \eqref{e:3619},
and \eqref{e:7947} that
\begin{equation}
\label{e:7878}
(\boldsymbol{e}_n)_{n\in\NN}\;\:\text{is bounded}.
\end{equation}
On the other hand, \eqref{e:long1}, \eqref{e:1633},
\eqref{e:7947}, Problem~\ref{prob:1}\ref{prob:1e}, 
and \eqref{e:2095} imply that
\begin{equation}
(\boldsymbol{p}_n^*)_{n\in\NN}\;\:\text{is bounded}.
\end{equation}
Hence, we infer from \eqref{e:8870} and
\eqref{e:9429}--\eqref{e:7878} that
$(\boldsymbol{\mathsf{t}}_n^*)_{n\in\NN}$ is bounded.
Consequently, \eqref{e:8350} and
Proposition~\ref{p:3404}\ref{p:3404iii} yield
\begin{equation}
\label{e:9758}
\varlimsup\big(
\scal{\boldsymbol{\mathsf{x}}_n
-\boldsymbol{\mathsf{p}}_n}{\boldsymbol{\mathsf{t}}_n^*}
-(4\alpha)^{-1}\|\boldsymbol{\mathsf{p}}_n
-\boldsymbol{\mathsf{q}}_n\|^2\big)
=\varlimsup\Delta_n\leq 0.
\end{equation}
Let $\boldsymbol{L}$ and $\boldsymbol{\mathsf{W}}$
be as in \eqref{e:8757} and \eqref{e:4425}.
For every $n\in\NN$, set
\begin{equation}
\label{e:3958}
\begin{cases}
(\forall i\in I)\;\;
E_{i,n}=\gamma_{i,\vartheta_i(n)}^{-1}\Id-Q_i\\
(\forall k\in K)\;\;
F_{k,n}=\mu_{k,\varrho_k(n)}^{-1}\Id-B_k^{\LL},\;\:
G_{k,n}=\nu_{k,\varrho_k(n)}^{-1}\Id-D_k^{\LL}\\
\boldsymbol{\mathsf{E}}_n\colon\XXX\to\XXX\colon
(\boldsymbol{x},\boldsymbol{y},\boldsymbol{z},\boldsymbol{v}^*)
\mapsto
\big((E_{i,n}x_i)_{i\in I},(F_{k,n}y_k)_{k\in K},
(G_{k,n}z_k)_{k\in K},(\sigma_{k,\varrho_k(n)}^{-1}
v_k^*)_{k\in K}\big)
\end{cases}
\end{equation}
and
\begin{equation}
\label{e:3514}
\begin{cases}
\widetilde{\boldsymbol{\mathsf x}}_n=
\big((x_{i,\vartheta_i(n)})_{i\in I},
(y_{k,\varrho_k(n)})_{k\in K},(z_{k,\varrho_k(n)})_{k\in K},
(v_{k,\varrho_k(n)}^*)_{k\in K}\big)\\
\boldsymbol{\mathsf{v}}_n^*=
\boldsymbol{\mathsf{E}}_n\boldsymbol{\mathsf{x}}_n
-\boldsymbol{\mathsf{E}}_n\boldsymbol{\mathsf{p}}_n,\;\:
\boldsymbol{\mathsf{w}}_n^*=
\boldsymbol{\mathsf{W}}\boldsymbol{\mathsf{p}}_n
-\boldsymbol{\mathsf{W}}\boldsymbol{\mathsf{x}}_n\\
\boldsymbol{\mathsf{r}}_n^*
=\big((R_i\boldsymbol{a}_n
-R_i\boldsymbol{x}_n)_{i\in I},
\boldsymbol{0},\boldsymbol{0},\boldsymbol{0}\big),\;\:
\widetilde{\boldsymbol{\mathsf{r}}}_n^*
=\big((R_i\boldsymbol{a}_n
-R_i\boldsymbol{x}_{\vartheta_i(n)})_{i\in I},
\boldsymbol{0},\boldsymbol{0},\boldsymbol{0}\big)\\
\boldsymbol{\mathsf l}_n^*=
\Big(
\big({-}\sum_{k\in K}L_{ki}^*v_{k,\vartheta_i(n)}^*\big)_{i\in I},
\big(v_{k,\varrho_k(n)}^*\big)_{k\in K},
\big(v_{k,\varrho_k(n)}^*\big)_{k\in K},\\
\hspace{60mm}
\big(\sum_{i\in I}L_{ki}x_{i,\varrho_k(n)}
-y_{k,\varrho_k(n)}-z_{k,\varrho_k(n)}\big)_{k\in K}\Big).
\end{cases}
\end{equation}
In view of Problem~\ref{prob:1}\ref{prob:1a}--\ref{prob:1c}
and Assumption~\ref{a:2}\ref{a:2ii}\&\ref{a:2iii},
we deduce from Lemma~\ref{l:2156} that
\begin{equation}
\label{e:6983}
(\forall n\in\NN)\quad
\begin{cases}
\text{the operators}\;\:
(E_{i,n})_{i\in I}\;\:\text{are $(\chi+\sigma)$-strongly monotone}
\\
\text{the operators}\;\:
(F_{k,n})_{k\in K}\:\:\text{and}\;\:(G_{k,n})_{k\in K}\;\:
\text{are $\sigma$-strongly monotone},
\end{cases}
\end{equation}
and from \eqref{e:3958} that there exists $\kappa\in\RPP$
such that
\begin{equation}
\label{e:5297}
\text{the operators}\;\:
(\boldsymbol{\mathsf E}_n)_{n\in\NN}\;\:
\text{are $\kappa$-Lipschitzian}.
\end{equation}
It results from \eqref{e:4173},
\eqref{e:long1}, \eqref{e:9214}, and \eqref{e:3958} that
\begin{align}
\label{e:5004}
(\forall i\in I)(\forall n\in\NN)\quad a_{i,n}^*
&=a_{i,\overline{\vartheta}_i(n)}^*
\nonumber\\
&=\Big(\gamma_{i,\pi_i(\overline{\vartheta}_i(n))}^{-1}
x_{i,\pi_i(\overline{\vartheta}_i(n))}
-Q_ix_{i,\pi_i(\overline{\vartheta}_i(n))}\Big)
-\Big(\gamma_{i,\pi_i(\overline{\vartheta}_i(n))}^{-1}
a_{i,\overline{\vartheta}_i(n)}
-Q_ia_{i,\overline{\vartheta}_i(n)}\Big)
\nonumber\\
&\quad\;
-R_i\boldsymbol{x}_{\pi_i(\overline{\vartheta}_i(n))}
-\sum_{k\in K}L_{ki}^*v_{k,\pi_i(\overline{\vartheta}_i(n))}^*
\nonumber\\
&=E_{i,n}x_{i,\vartheta_i(n)}-E_{i,n}a_{i,n}
-R_i\boldsymbol{x}_{\vartheta_i(n)}
-\Sum_{k\in K}L_{ki}^*v_{k,\vartheta_i(n)}^*
\end{align}
and, therefore, that
\begin{align}
\label{e:8714}
(\forall i\in I)(\forall n\in\NN)\quad
p_{i,n}^*
&=a_{i,n}^*+R_i\boldsymbol{a}_n
+\sum_{k\in K}L_{ki}^*e_{k,n}^*
\nonumber\\
&=E_{i,n}x_{i,\vartheta_i(n)}-E_{i,n}a_{i,n}
+R_i\boldsymbol{a}_n
-R_i\boldsymbol{x}_{\vartheta_i(n)}
-\Sum_{k\in K}L_{ki}^*v_{k,\vartheta_i(n)}^*
+\sum_{k\in K}L_{ki}^*e_{k,n}^*.
\end{align}
At the same time, \eqref{e:6609}, \eqref{e:long1},
\eqref{e:3372}, and \eqref{e:3958} entail that
\begin{align}
(\forall k\in K)(\forall n\in\NN)\quad q_{k,n}^*
&=q_{k,\overline{\varrho}_k(n)}^*
\nonumber\\
&=\Big(\mu_{k,\omega_k(\overline{\varrho}_k(n))}^{-1}
y_{k,\omega_k(\overline{\varrho}_k(n))}
-B_k^{\LL}y_{k,\omega_k(\overline{\varrho}_k(n))}\Big)
\nonumber\\
&\quad\;-\Big(\mu_{k,\omega_k(\overline{\varrho}_k(n))}^{-1}
b_{k,\overline{\varrho}_k(n)}
-B_k^{\LL}b_{k,\overline{\varrho}_k(n)}\Big)
+v_{k,\omega_k(\overline{\varrho}_k(n))}^*
-e_{k,\overline{\varrho}_k(n)}^*
\nonumber\\
&=F_{k,n}y_{k,\varrho_k(n)}-F_{k,n}b_{k,n}+v_{k,\varrho_k(n)}^*
-e_{k,n}^*
\end{align}
and that
\begin{equation}
\label{e:5246}
(\forall k\in K)(\forall n\in\NN)\quad
t_{k,n}^*=G_{k,n}z_{k,\varrho_k(n)}-G_{k,n}d_{k,n}
+v_{k,\varrho_k(n)}^*-e_{k,n}^*.
\end{equation}
Further, we derive from \eqref{e:6609}, \eqref{e:long1},
and \eqref{e:3372} that
\begin{equation}
(\forall k\in K)(\forall n\in\NN)\quad
r_k=\sigma_{k,\varrho_k(n)}^{-1}v_{k,\varrho_k(n)}^*
-\sigma_{k,\varrho_k(n)}^{-1}e_{k,n}^*
-y_{k,\varrho_k(n)}-z_{k,\varrho_k(n)}
+\sum_{i\in I}L_{ki}x_{i,\varrho_k(n)}
\end{equation}
and, in turn, from \eqref{e:6107} that
\begin{multline}
\label{e:2740}
(\forall k\in K)(\forall n\in\NN)\quad
e_{k,n}
=\sigma_{k,\varrho_k(n)}^{-1}v_{k,\varrho_k(n)}^*
-\sigma_{k,\varrho_k(n)}^{-1}e_{k,n}^*
-y_{k,\varrho_k(n)}-z_{k,\varrho_k(n)}
\\
+\sum_{i\in I}L_{ki}x_{i,\varrho_k(n)}
+b_{k,n}+d_{k,n}-\sum_{i\in I}L_{ki}a_{i,n}.
\end{multline}
Altogether, it follows from \eqref{e:8870},
\eqref{e:8714}--\eqref{e:5246}, \eqref{e:2740},
\eqref{e:3958}, \eqref{e:3514},
\eqref{e:4425}, and \eqref{e:3041} that
\begin{equation}
\label{e:2842}
(\forall n\in\NN)\quad\boldsymbol{\mathsf{t}}_n^*=
\boldsymbol{\mathsf{E}}_n\widetilde{\boldsymbol{\mathsf{x}}}_n
-\boldsymbol{\mathsf{E}}_n\boldsymbol{\mathsf{p}}_n
+\widetilde{\boldsymbol{\mathsf{r}}}_n^*
+\boldsymbol{\mathsf{l}}_n^*
+\boldsymbol{\mathsf{W}}\boldsymbol{\mathsf{p}}_n.
\end{equation}
Next, in view of \eqref{e:4932}, \eqref{e:9214}, \eqref{e:3372},
and Assumption~\ref{a:1}--\ref{a:3}, we learn from
Lemma~\ref{l:5368} that
\begin{equation}
\label{e:8706}
(\forall i\in I)(\forall k\in K)\quad
\begin{cases}
\boldsymbol{x}_{\vartheta_i(n)}-\boldsymbol{x}_n\to
\boldsymbol{0},\;\:
\boldsymbol{x}_{\varrho_k(n)}-\boldsymbol{x}_n\to
\boldsymbol{0},\;\:\text{and}\;\:
\boldsymbol{v}_{\vartheta_i(n)}^*-\boldsymbol{v}_n^*\to
\boldsymbol{0}\\
\boldsymbol{y}_{\varrho_k(n)}-\boldsymbol{y}_n\to
\boldsymbol{0},\;\:
\boldsymbol{z}_{\varrho_k(n)}-\boldsymbol{z}_n\to
\boldsymbol{0},\;\:\text{and}\;\:
\boldsymbol{v}_{\varrho_k(n)}^*-\boldsymbol{v}_n^*\to
\boldsymbol{0}.
\end{cases}
\end{equation}
Thus, \eqref{e:3514}, \eqref{e:4425}, \eqref{e:3041},
and \eqref{e:8757} yield
\begin{equation}
\label{e:6047}
\boldsymbol{\mathsf{l}}_n^*
+\boldsymbol{\mathsf{W}}\boldsymbol{\mathsf{x}}_n
\to\boldsymbol{\mathsf{0}},
\end{equation}
while Problem~\ref{prob:1}\ref{prob:1e} gives
\begin{equation}
(\forall i\in I)\quad
\|R_i\boldsymbol{x}_{\vartheta_i(n)}
-R_i\boldsymbol{x}_n\|
\leq\chi\|\boldsymbol{x}_{\vartheta_i(n)}-\boldsymbol{x}_n\|
\to 0.
\end{equation}
On the other hand, we infer from \eqref{e:5297}, \eqref{e:3514},
and \eqref{e:8706} that
\begin{equation}
\label{e:8506}
\|\boldsymbol{\mathsf{E}}_n\widetilde{\boldsymbol{\mathsf{x}}}_n
-\boldsymbol{\mathsf{E}}_n\boldsymbol{\mathsf{x}}_n\|
\leq\kappa\|\widetilde{\boldsymbol{\mathsf{x}}}_n
-\boldsymbol{\mathsf{x}}_n\|\to 0.
\end{equation}
Combining \eqref{e:2842}, \eqref{e:3514}, and
\eqref{e:6047}--\eqref{e:8506}, we obtain
\begin{equation}
\label{e:9860}
\boldsymbol{\mathsf{t}}_n^*-\big(\boldsymbol{\mathsf{v}}_n^*
+\boldsymbol{\mathsf{r}}_n^*
+\boldsymbol{\mathsf{w}}_n^*\big)
=\boldsymbol{\mathsf{l}}_n^*
+\boldsymbol{\mathsf{W}}\boldsymbol{\mathsf{x}}_n
+\boldsymbol{\mathsf{E}}_n\widetilde{\boldsymbol{\mathsf{x}}}_n
-\boldsymbol{\mathsf{E}}_n\boldsymbol{\mathsf{x}}_n
+\widetilde{\boldsymbol{\mathsf{r}}}_n^*
-\boldsymbol{\mathsf{r}}_n^*
\to\boldsymbol{\mathsf{0}}.
\end{equation}
Now set
\begin{equation}
\label{e:qnt}
(\forall n\in\NN)\quad
\widetilde{\boldsymbol{\mathsf{q}}}_n=
(\boldsymbol{x}_n,\boldsymbol{y}_n,\boldsymbol{z}_n,
\boldsymbol{e}_n^*).
\end{equation}
Then $(\widetilde{\boldsymbol{\mathsf{q}}}_n)_{n\in\NN}$
is bounded by virtue of \eqref{e:3218} and
\eqref{e:2095}.
On the one hand, \eqref{e:8870}, \eqref{e:2095},
\eqref{e:7947}, \eqref{e:2917}, and \eqref{e:3619} imply that
$(\boldsymbol{\mathsf{p}}_n)_{n\in\NN}$ is bounded.
On the other hand, \eqref{e:8870} and \eqref{e:8706} give
\begin{equation}
\label{e:5101}
\widetilde{\boldsymbol{\mathsf{q}}}_n
-\boldsymbol{\mathsf{q}}_n\to\boldsymbol{\mathsf{0}}.
\end{equation}
Therefore, appealing to the Cauchy--Schwarz inequality, we obtain
\begin{equation}
\label{e:5202}
\big|\scal{\boldsymbol{\mathsf{p}}_n
-\widetilde{\boldsymbol{\mathsf{q}}}_n
}{\widetilde{\boldsymbol{\mathsf{q}}}_n
-\boldsymbol{\mathsf{q}}_n}\big|
\leq\bigg(\sup_{m\in\NN}\|\boldsymbol{\mathsf{p}}_m\|
+\sup_{m\in\NN}\|\widetilde{\boldsymbol{\mathsf{q}}}_m\|\bigg)
\|\widetilde{\boldsymbol{\mathsf{q}}}_n
-\boldsymbol{\mathsf{q}}_n\|\to 0
\end{equation}
and, by \eqref{e:9860},
\begin{equation}
\label{e:5203}
\big|\scal{\boldsymbol{\mathsf{x}}_n-\boldsymbol{\mathsf{p}}_n}{
\boldsymbol{\mathsf{t}}_n^*
-(\boldsymbol{\mathsf{v}}_n^*+\boldsymbol{\mathsf{r}}_n^*
+\boldsymbol{\mathsf{w}}_n^*)
}\big|\leq\bigg(\sup_{m\in\NN}\|\boldsymbol{\mathsf{x}}_m\|
+\sup_{m\in\NN}\|\boldsymbol{\mathsf{p}}_m\|
\bigg)\|\boldsymbol{\mathsf{t}}_n^*
-(\boldsymbol{\mathsf{v}}_n^*+\boldsymbol{\mathsf{r}}_n^*
+\boldsymbol{\mathsf{w}}_n^*)\|\to 0.
\end{equation}
However, since
$\boldsymbol{\mathsf{W}}^*={-}\boldsymbol{\mathsf{W}}$
by \eqref{e:4425}, it results from \eqref{e:3514} that
$(\forall n\in\NN)$ 
$\scal{\boldsymbol{\mathsf{x}}_n-\boldsymbol{\mathsf{p}}_n}{
\boldsymbol{\mathsf{w}}_n^*}=0$.
Thus, by \eqref{e:9758} and \eqref{e:5101}--\eqref{e:5203},
\begin{align}
\label{e:5224}
0
&\geq\varlimsup\big(
\scal{\boldsymbol{\mathsf{x}}_n
-\boldsymbol{\mathsf{p}}_n}{\boldsymbol{\mathsf{t}}_n^*}
-(4\alpha)^{-1}\|\boldsymbol{\mathsf{p}}_n
-\boldsymbol{\mathsf{q}}_n\|^2\big)
\nonumber\\
&=\varlimsup\big(
\scal{\boldsymbol{\mathsf{x}}_n-\boldsymbol{\mathsf{p}}_n}{
\boldsymbol{\mathsf{v}}_n^*+\boldsymbol{\mathsf{r}}_n^*
+\boldsymbol{\mathsf{w}}_n^*}
+\scal{\boldsymbol{\mathsf{x}}_n-\boldsymbol{\mathsf{p}}_n}{
\boldsymbol{\mathsf{t}}_n^*
-(\boldsymbol{\mathsf{v}}_n^*+\boldsymbol{\mathsf{r}}_n^*
+\boldsymbol{\mathsf{w}}_n^*)
}-(4\alpha)^{-1}\|\boldsymbol{\mathsf{p}}_n
-\boldsymbol{\mathsf{q}}_n\|^2\big)
\nonumber\\
&=\varlimsup\big(
\scal{\boldsymbol{\mathsf{x}}_n-\boldsymbol{\mathsf{p}}_n}{
\boldsymbol{\mathsf{v}}_n^*+\boldsymbol{\mathsf{r}}_n^*}
-(4\alpha)^{-1}\big(
\|\boldsymbol{\mathsf{p}}_n
-\widetilde{\boldsymbol{\mathsf{q}}}_n\|^2
+2\scal{\boldsymbol{\mathsf{p}}_n
-\widetilde{\boldsymbol{\mathsf{q}}}_n
}{\widetilde{\boldsymbol{\mathsf{q}}}_n
-\boldsymbol{\mathsf{q}}_n}
+\|\widetilde{\boldsymbol{\mathsf{q}}}_n
-\boldsymbol{\mathsf{q}}_n\|^2
\big)\big)
\nonumber\\
&=\varlimsup\big(
\scal{\boldsymbol{\mathsf{x}}_n-\boldsymbol{\mathsf{p}}_n}{
\boldsymbol{\mathsf{v}}_n^*+\boldsymbol{\mathsf{r}}_n^*}
-(4\alpha)^{-1}\|\boldsymbol{\mathsf{p}}_n
-\widetilde{\boldsymbol{\mathsf{q}}}_n\|^2\big).
\end{align}
On the other hand, we deduce from \eqref{e:3514}, \eqref{e:8870},
\eqref{e:3958}, \eqref{e:6983}, Assumption~\ref{a:2}\ref{a:2iii},
the Cauchy--Schwarz inequality, Problem~\ref{prob:1}\ref{prob:1e},
and \eqref{e:qnt} that, for every $n\in\NN$,
\begin{align}
\label{e:1234}
&\scal{\boldsymbol{\mathsf{x}}_n-\boldsymbol{\mathsf{p}}_n}{
\boldsymbol{\mathsf{v}}_n^*+\boldsymbol{\mathsf{r}}_n^*}
-(4\alpha)^{-1}\|\boldsymbol{\mathsf{p}}_n
-\widetilde{\boldsymbol{\mathsf{q}}}_n\|^2
\nonumber\\
&\hspace{5mm}
=\scal{\boldsymbol{\mathsf{x}}_n-\boldsymbol{\mathsf{p}}_n}{
\boldsymbol{\mathsf{E}}_n\boldsymbol{\mathsf{x}}_n
-\boldsymbol{\mathsf{E}}_n\boldsymbol{\mathsf{p}}_n
}
+\scal{\boldsymbol{\mathsf{x}}_n-\boldsymbol{\mathsf{p}}_n}{
\boldsymbol{\mathsf{r}}_n^*}
-(4\alpha)^{-1}\|\boldsymbol{\mathsf{p}}_n
-\widetilde{\boldsymbol{\mathsf{q}}}_n\|^2
\nonumber\\
&\hspace{5mm}
=\sum_{i\in I}\scal{x_{i,n}-a_{i,n}}{E_{i,n}x_{i,n}
-E_{i,n}a_{i,n}}
+\sum_{k\in K}\scal{y_{k,n}-b_{k,n}}{F_{k,n}y_{k,n}
-F_{k,n}b_{k,n}}
\nonumber\\
&\hspace{5mm}
\quad\;+\sum_{k\in K}\scal{z_{k,n}-d_{k,n}}{G_{k,n}z_{k,n}
-G_{k,n}d_{k,n}}+\sum_{k\in K}
\sigma_{k,\varrho_k(n)}^{-1}\|v_{k,n}^*-e_{k,n}^*\|^2
\nonumber\\
&\hspace{5mm}
\quad\;+\scal{\boldsymbol{x}_n-\boldsymbol{a}_n}
{\boldsymbol{R}\boldsymbol{a}_n-\boldsymbol{R}\boldsymbol{x}_n}
-(4\alpha)^{-1}\|\boldsymbol{\mathsf{p}}_n
-\widetilde{\boldsymbol{\mathsf{q}}}_n\|^2
\nonumber\\
&\hspace{5mm}
\geq(\chi+\sigma)\|\boldsymbol{x}_n-\boldsymbol{a}_n\|^2
+\sigma\|\boldsymbol{y}_n-\boldsymbol{b}_n\|^2
+\sigma\|\boldsymbol{z}_n-\boldsymbol{d}_n\|^2
\nonumber\\
&\hspace{5mm}
\quad\;
+\varepsilon\|\boldsymbol{v}_n^*-\boldsymbol{e}_n^*\|^2
-\|\boldsymbol{x}_n-\boldsymbol{a}_n\|\,
\|\boldsymbol{R}\boldsymbol{a}_n-\boldsymbol{R}\boldsymbol{x}_n\|
-(4\alpha)^{-1}\|\boldsymbol{\mathsf{p}}_n
-\widetilde{\boldsymbol{\mathsf{q}}}_n\|^2
\nonumber\\
&\hspace{5mm}
\geq(\chi+\sigma)\|\boldsymbol{x}_n-\boldsymbol{a}_n\|^2
+\sigma\|\boldsymbol{y}_n-\boldsymbol{b}_n\|^2
+\sigma\|\boldsymbol{z}_n-\boldsymbol{d}_n\|^2
\nonumber\\
&\hspace{5mm}
\quad\;
+\varepsilon\|\boldsymbol{v}_n^*-\boldsymbol{e}_n^*\|^2
-\chi\|\boldsymbol{x}_n-\boldsymbol{a}_n\|^2
-(4\alpha)^{-1}\|\boldsymbol{\mathsf{p}}_n
-\widetilde{\boldsymbol{\mathsf{q}}}_n\|^2
\nonumber\\
&\hspace{5mm}
=\big(\sigma-(4\alpha)^{-1}\big)
\big(\|\boldsymbol{x}_n-\boldsymbol{a}_n\|^2
+\|\boldsymbol{y}_n-\boldsymbol{b}_n\|^2
+\|\boldsymbol{z}_n-\boldsymbol{d}_n\|^2\big)
+\varepsilon\|\boldsymbol{v}_n^*-\boldsymbol{e}_n^*\|^2.
\end{align}
Hence, since $\sigma>1/(4\alpha)$ by \eqref{e:a2i'},
taking the limit superior in \eqref{e:1234}
and invoking \eqref{e:5224} yield
\begin{equation}
\label{e:3513}
\boldsymbol{x}_n-\boldsymbol{a}_n\to\boldsymbol{0},\;\:
\boldsymbol{y}_n-\boldsymbol{b}_n\to\boldsymbol{0},\;\:
\boldsymbol{z}_n-\boldsymbol{d}_n\to\boldsymbol{0},\;\:
\text{and}\;\:
\boldsymbol{v}_n^*-\boldsymbol{e}_n^*\to\boldsymbol{0},
\end{equation}
which establishes \ref{t:1iii-}.
In turn, \eqref{e:8870} and \eqref{e:5297} force
\begin{equation}
\label{e:9421}
\boldsymbol{\mathsf{x}}_n
-\boldsymbol{\mathsf{p}}_n\to\boldsymbol{\mathsf{0}}
\quad\text{and}\quad
\|\boldsymbol{\mathsf{E}}_n\boldsymbol{\mathsf{x}}_n
-\boldsymbol{\mathsf{E}}_n\boldsymbol{\mathsf{p}}_n\|
\leq\kappa\|\boldsymbol{\mathsf{x}}_n
-\boldsymbol{\mathsf p}_n\|\to 0
\end{equation}
and \eqref{e:8706} thus yields
$\boldsymbol{\mathsf{p}}_n-\boldsymbol{\mathsf{q}}_n
\to\boldsymbol{\mathsf{0}}$.
Further, we infer from \eqref{e:3514}, \eqref{e:3513},
and Problem~\ref{prob:1}\ref{prob:1e} that
\begin{equation}
\label{e:725}
\|\boldsymbol{\mathsf{r}}_n^*\|^2
=\|\boldsymbol{R}\boldsymbol{a}_n
-\boldsymbol{R}\boldsymbol{x}_n\|^2
\leq\chi^2\|\boldsymbol{a}_n-\boldsymbol{x}_n\|^2\to 0.
\end{equation}
Altogether, it follows from \eqref{e:3514}, \eqref{e:9860},
\eqref{e:9421}, and \eqref{e:725} that
\begin{equation}
\boldsymbol{\mathsf{t}}_n^*=
\big(\boldsymbol{\mathsf{t}}_n^*-\big(\boldsymbol{\mathsf{v}}_n^*
+\boldsymbol{\mathsf{r}}_n^*
+\boldsymbol{\mathsf{w}}_n^*\big)
\big)
+\big(\boldsymbol{\mathsf{E}}_n\boldsymbol{\mathsf{x}}_n
-\boldsymbol{\mathsf{E}}_n\boldsymbol{\mathsf{p}}_n\big)
+\boldsymbol{\mathsf{W}}(\boldsymbol{\mathsf{p}}_n
-\boldsymbol{\mathsf{x}}_n)
+\boldsymbol{\mathsf{r}}_n^*
\to\boldsymbol{\mathsf{0}}.
\end{equation}
Hence, Proposition~\ref{p:3404}\ref{p:3404iv}
guarantees that there exists
$\overline{\boldsymbol{\mathsf{x}}}
=(\overline{\boldsymbol{x}},\overline{\boldsymbol{y}},
\overline{\boldsymbol{z}},\overline{\boldsymbol{v}}^*)\in\zer\sad$
such that $\boldsymbol{\mathsf{x}}_n\weakly
\overline{\boldsymbol{\mathsf{x}}}$.
This and \eqref{e:3513} imply that, for every $i\in I$
and every $k\in K$,
$x_{i,n}\weakly\overline{x}_i$, $a_{i,n}\weakly\overline{x}_i$,
and $v_{k,n}^*\weakly\overline{v}_k^*$.
Finally, Proposition~\ref{p:6}\ref{p:6ii} asserts that
$(\overline{\boldsymbol{x}},\overline{\boldsymbol{v}}^*)$
lies in the set of Kuhn--Tucker
points \eqref{e:Z}, that $\overline{\boldsymbol{x}}$
solves \eqref{e:1p}, and that
$\overline{\boldsymbol{v}}^*$ solves \eqref{e:1d}.
\end{proof}

Some infinite-dimensional applications require strong 
convergence of the iterates; see, e.g., \cite{Sico10,Atto16}. 
This will be guaranteed by the
following variant of Algorithm~\ref{algo:1}, which hinges on the
principle outlined in Proposition~\ref{p:4738}.

\begin{algorithm}
\label{algo:3}
Consider the setting of Problem~\ref{prob:1},
define $\Xi$ as in \eqref{e:6146}, and suppose that
Assumption~\ref{a:2}--\ref{a:3} is in force. Iterate
\pagebreak[1]
\begin{equation}
\begin{array}{l}
\text{for}\;n=0,1,\ldots\\
\left\lfloor
\begin{array}{l}
\text{for every}\;i\in I_n\\
\left\lfloor
\begin{array}{l}
l_{i,n}^*=Q_ix_{i,\pi_i(n)}
+R_i\boldsymbol{x}_{\pi_i(n)}
+\sum_{k\in K}L_{ki}^*v_{k,\pi_i(n)}^*;\\
a_{i,n}=J_{\gamma_{i,\pi_i(n)}A_i}\big(
x_{i,\pi_i(n)}+\gamma_{i,\pi_i(n)}(s_i^*-l_{i,n}^*
-C_ix_{i,\pi_i(n)})\big);\\
a_{i,n}^*=\gamma_{i,\pi_i(n)}^{-1}(x_{i,\pi_i(n)}
-a_{i,n})-l_{i,n}^*+Q_ia_{i,n};\\
\xi_{i,n}=\|a_{i,n}-x_{i,\pi_i(n)}\|^2;
\end{array}
\right.\\
\text{for every}\;i\in I\smallsetminus I_n\\
\left\lfloor
\begin{array}{l}
a_{i,n}=a_{i,n-1};\;a_{i,n}^*=a_{i,n-1}^*;\;
\xi_{i,n}=\xi_{i,n-1};\\
\end{array}
\right.\\
\text{for every}\;k\in K_n\\
\left\lfloor
\begin{array}{l}
u_{k,n}^*=v_{k,\omega_k(n)}^*-B_k^{\LL}y_{k,\omega_k(n)};\\
w_{k,n}^*=v_{k,\omega_k(n)}^*-D_k^{\LL}z_{k,\omega_k(n)};\\
b_{k,n}=J_{\mu_{k,\omega_k(n)}B_k^{\MM}}\big(y_{k,\omega_k(n)}
+\mu_{k,\omega_k(n)}(u_{k,n}^*-B_k^{\CC}y_{k,\omega_k(n)})
\big);\\
d_{k,n}=J_{\nu_{k,\omega_k(n)}D_k^{\MM}}\big(z_{k,\omega_k(n)}
+\nu_{k,\omega_k(n)}(w_{k,n}^*-D_k^{\CC}z_{k,\omega_k(n)})\big);\\
e_{k,n}^*=\sigma_{k,\omega_k(n)}\big(
\sum_{i\in I}L_{ki}x_{i,\omega_k(n)}
-y_{k,\omega_k(n)}-z_{k,\omega_k(n)}-r_k\big)+v_{k,\omega_k(n)}^*
;\\
q_{k,n}^*=\mu_{k,\omega_k(n)}^{-1}(y_{k,\omega_k(n)}-b_{k,n})
+u_{k,n}^*+B_k^{\LL}b_{k,n}-e_{k,n}^*;\\
t_{k,n}^*=\nu_{k,\omega_k(n)}^{-1}(z_{k,\omega_k(n)}-d_{k,n})
+w_{k,n}^*+D_k^{\LL}d_{k,n}-e_{k,n}^*;\\
\eta_{k,n}=\|b_{k,n}-y_{k,\omega_k(n)}\|^2
+\|d_{k,n}-z_{k,\omega_k(n)}\|^2;\\
e_{k,n}=r_k+b_{k,n}+d_{k,n}-\sum_{i\in I}L_{ki}a_{i,n};
\end{array}
\right.\\
\text{for every}\;k\in K\smallsetminus K_n\\
\left\lfloor
\begin{array}{l}
b_{k,n}=b_{k,n-1};\;
d_{k,n}=d_{k,n-1};\;
e_{k,n}^*=e_{k,n-1}^*;\;
q_{k,n}^*=q_{k,n-1}^*;\;
t_{k,n}^*=t_{k,n-1}^*;\\
\eta_{k,n}=\eta_{k,n-1};\;
e_{k,n}=r_k+b_{k,n}+d_{k,n}-\sum_{i\in I}L_{ki}a_{i,n};
\end{array}
\right.\\
\text{for every}\;i\in I\\
\left\lfloor
\begin{array}{l}
p_{i,n}^*=a_{i,n}^*
+R_i\boldsymbol{a}_n
+\sum_{k\in K}L_{ki}^*e_{k,n}^*;
\end{array}
\right.\\
\begin{aligned}
\Delta_n&=\textstyle
{-}(4\alpha)^{-1}\big(\sum_{i\in I}\xi_{i,n}
+\sum_{k\in K}\eta_{k,n}\big)
+\sum_{i\in I}\scal{x_{i,n}-a_{i,n}}{p_{i,n}^*}\\
&\textstyle
\quad\;+\sum_{k\in K}\big(\scal{y_{k,n}-b_{k,n}}{q_{k,n}^*}
+\scal{z_{k,n}-d_{k,n}}{t_{k,n}^*}
+\scal{e_{k,n}}{v_{k,n}^*-e_{k,n}^*}\big);
\end{aligned}\\
\text{if}\;\Delta_n>0\\
\left\lfloor
\begin{array}{l}
\tau_n=\sum_{i\in I}\|p_{i,n}^*\|^2+\sum_{k\in K}\big(
\|q_{k,n}^*\|^2+\|t_{k,n}^*\|^2+\|e_{k,n}\|^2\big);\\
\begin{aligned}
\varsigma_n&=\textstyle
\sum_{i\in I}\|x_{i,0}-x_{i,n}\|^2\\
&\textstyle\quad\;
+\sum_{k\in K}\big(\|y_{k,0}-y_{k,n}\|^2
+\|z_{k,0}-z_{k,n}\|^2+\|v_{k,0}^*-v_{k,n}^*\|^2\big);
\end{aligned}\\
\begin{aligned}
\chi_n&=\textstyle
\sum_{i\in I}\scal{x_{i,0}-x_{i,n}}{p_{i,n}^*}\\
&\textstyle\quad\;
+\sum_{k\in K}\big(\scal{y_{k,0}-y_{k,n}}{q_{k,n}^*}
+\scal{z_{k,0}-z_{k,n}}{t_{k,n}^*}
+\scal{e_{k,n}}{v_{k,0}^*-v_{k,n}^*}\big);
\end{aligned}\\
(\kappa_n,\lambda_n)=\Xi(\Delta_n,\tau_n,\varsigma_n,\chi_n);\\
\text{for every}\;i\in I\\
\left\lfloor
\begin{array}{l}
x_{i,n+1}=(1-\kappa_n)x_{i,0}+\kappa_nx_{i,n}-\lambda_np_{i,n}^*;
\end{array}
\right.\\
\text{for every}\;k\in K\\
\left\lfloor
\begin{array}{l}
y_{k,n+1}=(1-\kappa_n)y_{k,0}+\kappa_ny_{k,n}-\lambda_nq_{k,n}^*;\\
z_{k,n+1}=(1-\kappa_n)z_{k,0}+\kappa_nz_{k,n}-\lambda_nt_{k,n}^*;\\
v_{k,n+1}^*=(1-\kappa_n)v_{k,0}^*
+\kappa_nv_{k,n}^*-\lambda_ne_{k,n};
\end{array}
\right.\\[6mm]
\end{array}
\right.\\
\text{else}\\
\left\lfloor
\begin{array}{l}
\text{for every}\;i\in I\\
\left\lfloor
\begin{array}{l}
x_{i,n+1}=x_{i,n};
\end{array}
\right.\\
\text{for every}\;k\in K\\
\left\lfloor
\begin{array}{l}
y_{k,n+1}=y_{k,n};\;z_{k,n+1}=z_{k,n};\;v_{k,n+1}^*=v_{k,n}^*.
\end{array}
\right.\\[1mm]
\end{array}
\right.\\[9.5mm]
\end{array}
\right.
\end{array}
\end{equation}
\end{algorithm}

\newpage
\begin{theorem}
\label{t:2}
Consider the setting of Algorithm~\ref{algo:3} and suppose that
the dual problem \eqref{e:1d} has a solution. Then the following
hold:
\begin{enumerate}
\item
\label{t:2i}
Let $i\in I$. Then $\sum_{n\in\NN}\|x_{i,n+1}-x_{i,n}\|^2<\pinf$.
\item
\label{t:2ii}
Let $k\in K$. Then $\sum_{n\in\NN}\|y_{k,n+1}-y_{k,n}\|^2<\pinf$,
$\sum_{n\in\NN}\|z_{k,n+1}-z_{k,n}\|^2<\pinf$, and
$\sum_{n\in\NN}\|v_{k,n+1}^*-v_{k,n}^*\|^2<\pinf$.
\item
\label{t:2iii-}
Let $i\in I$ and $k\in K$. Then
$x_{i,n}-a_{i,n}\to 0$, $y_{k,n}-b_{k,n}\to 0$,
$z_{k,n}-d_{k,n}\to 0$, and $v_{k,n}^*-e_{k,n}^*\to 0$.
\item
\label{t:2iii}
There exist a solution $\overline{\boldsymbol{x}}$ to \eqref{e:1p} 
and a solution $\overline{\boldsymbol{v}}^*$ to \eqref{e:1d} 
such that, for every $i\in I$ and every $k\in K$,
$x_{i,n}\to\overline{x}_i$, $a_{i,n}\to\overline{x}_i$, and
$v_{k,n}^*\to\overline{v}_k^*$.
In addition, $(\overline{\boldsymbol{x}},
\overline{\boldsymbol{v}}^*)$ is a Kuhn--Tucker point
of Problem~\ref{prob:1} in the sense of \eqref{e:Z}.
\end{enumerate}
\end{theorem}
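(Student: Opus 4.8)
The plan is to mirror the proof of Theorem~\ref{t:1} step by step, with Proposition~\ref{p:4738} playing the role that Proposition~\ref{p:3404} played there. First, since \eqref{e:1d} has a solution, Proposition~\ref{p:6}\ref{p:6iv} gives $\zer\sad\neq\emp$, so the standing hypothesis of Proposition~\ref{p:4738} is in force. I would then introduce, exactly as in \eqref{e:9214}--\eqref{e:3372}, the delayed-index maps $\overline{\vartheta}_i$, $\vartheta_i$, $\overline{\varrho}_k$, $\varrho_k$, together with the aggregated quantities $\boldsymbol{\mathsf{x}}_n$, $\boldsymbol{\mathsf{p}}_n$, $\boldsymbol{\mathsf{p}}_n^*$, $\boldsymbol{\mathsf{q}}_n$, $\boldsymbol{\mathsf{t}}_n^*$ of \eqref{e:8870}. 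Because the instructions of Algorithm~\ref{algo:3} that produce $a_{i,n}$, $a_{i,n}^*$, $\xi_{i,n}$, $b_{k,n}$, $d_{k,n}$, $e_{k,n}^*$, $q_{k,n}^*$, $t_{k,n}^*$, $\eta_{k,n}$, $e_{k,n}$, $p_{i,n}^*$, and $\Delta_n$ coincide verbatim with those of Algorithm~\ref{algo:1}, the argument of the proof of Theorem~\ref{t:1} leading to \eqref{e:8350} carries over unchanged: $(\boldsymbol{\mathsf{p}}_n,\boldsymbol{\mathsf{p}}_n^*)_{n\in\NN}$ lies in $\gra\boldsymbol{\mathsf{M}}$, one has $\boldsymbol{\mathsf{t}}_n^*=\boldsymbol{\mathsf{p}}_n^*+\boldsymbol{\mathsf{C}}\boldsymbol{\mathsf{q}}_n$, and $\Delta_n=\scal{\boldsymbol{\mathsf{x}}_n-\boldsymbol{\mathsf{p}}_n}{\boldsymbol{\mathsf{t}}_n^*}-(4\alpha)^{-1}\|\boldsymbol{\mathsf{p}}_n-\boldsymbol{\mathsf{q}}_n\|^2$. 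The only difference lies in the update block, which now reads $(\kappa_n,\lambda_n)=\Xi(\Delta_n,\tau_n,\varsigma_n,\chi_n)$ with $\tau_n=\|\boldsymbol{\mathsf{t}}_n^*\|^2$, $\varsigma_n=\|\boldsymbol{\mathsf{x}}_0-\boldsymbol{\mathsf{x}}_n\|^2$, $\chi_n=\scal{\boldsymbol{\mathsf{x}}_0-\boldsymbol{\mathsf{x}}_n}{\boldsymbol{\mathsf{t}}_n^*}$, followed by $\boldsymbol{\mathsf{x}}_{n+1}=(1-\kappa_n)\boldsymbol{\mathsf{x}}_0+\kappa_n\boldsymbol{\mathsf{x}}_n-\lambda_n\boldsymbol{\mathsf{t}}_n^*$; hence the iteration of Algorithm~\ref{algo:3} is an instantiation of \eqref{e:9060}. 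Proposition~\ref{p:4738}\ref{p:4738ii} then yields $\sum_{n\in\NN}\|\boldsymbol{\mathsf{x}}_{n+1}-\boldsymbol{\mathsf{x}}_n\|^2<\pinf$, and in view of \eqref{e:8870} this establishes \ref{t:2i} and \ref{t:2ii}.

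For \ref{t:2iii-} and \ref{t:2iii}, I would start from the observation that Proposition~\ref{p:4738}\ref{p:4738i} forces $(\boldsymbol{\mathsf{x}}_n)_{n\in\NN}$ to stay in the closed ball of center $\boldsymbol{\mathsf{x}}_0$ and radius $\|\proj_{\zer\sad}\boldsymbol{\mathsf{x}}_0-\boldsymbol{\mathsf{x}}_0\|$, hence to be bounded. From there, the entire boundedness cascade \eqref{e:3218}--\eqref{e:7878}, the identity \eqref{e:2842}, the limits \eqref{e:6047}--\eqref{e:9860} derived via Lemmas~\ref{l:2156} and \ref{l:5368}, and the resulting boundedness of $(\boldsymbol{\mathsf{t}}_n^*)_{n\in\NN}$ go through word for word, so Proposition~\ref{p:4738}\ref{p:4738iii} gives $\varlimsup\Delta_n\leq 0$. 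Feeding this into the strong-monotonicity estimate \eqref{e:5224}--\eqref{e:1234} — which uses only \eqref{e:6983}, Assumption~\ref{a:2}, the inequality $\sigma>1/(4\alpha)$ from \eqref{e:a2i'}, and Problem~\ref{prob:1}\ref{prob:1e}, all unchanged — produces $\boldsymbol{x}_n-\boldsymbol{a}_n\to\boldsymbol{0}$, $\boldsymbol{y}_n-\boldsymbol{b}_n\to\boldsymbol{0}$, $\boldsymbol{z}_n-\boldsymbol{d}_n\to\boldsymbol{0}$, and $\boldsymbol{v}_n^*-\boldsymbol{e}_n^*\to\boldsymbol{0}$, and therefore, exactly as in the displays following \eqref{e:3513}, $\boldsymbol{\mathsf{x}}_n-\boldsymbol{\mathsf{p}}_n\to\boldsymbol{\mathsf{0}}$, $\boldsymbol{\mathsf{p}}_n-\boldsymbol{\mathsf{q}}_n\to\boldsymbol{\mathsf{0}}$, and $\boldsymbol{\mathsf{t}}_n^*\to\boldsymbol{\mathsf{0}}$. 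At this point Proposition~\ref{p:4738}\ref{p:4738iv} applies and delivers the strong limit $\boldsymbol{\mathsf{x}}_n\to\proj_{\zer\sad}\boldsymbol{\mathsf{x}}_0$; writing $\overline{\boldsymbol{\mathsf{x}}}=(\overline{\boldsymbol{x}},\overline{\boldsymbol{y}},\overline{\boldsymbol{z}},\overline{\boldsymbol{v}}^*):=\proj_{\zer\sad}\boldsymbol{\mathsf{x}}_0\in\zer\sad$ and combining with the limits just obtained yields $x_{i,n}\to\overline{x}_i$, $a_{i,n}\to\overline{x}_i$, and $v_{k,n}^*\to\overline{v}_k^*$ for every $i\in I$ and every $k\in K$, while Proposition~\ref{p:6}\ref{p:6ii} identifies $(\overline{\boldsymbol{x}},\overline{\boldsymbol{v}}^*)$ as a Kuhn--Tucker point in the sense of \eqref{e:Z}, with $\overline{\boldsymbol{x}}$ solving \eqref{e:1p} and $\overline{\boldsymbol{v}}^*$ solving \eqref{e:1d}.

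I do not expect a genuine obstacle: the very purpose of isolating Proposition~\ref{p:4738} (and, for Theorem~\ref{t:1}, Proposition~\ref{p:3404}) is to confine the only difference between the two algorithms — the relaxed-projection update versus the $\Xi$-based update — to an abstract scheme, so that every estimate downstream of the definition of $\Delta_n$ is common to both. The single point that merits care is checking that all intermediate displays invoked above involve only $\boldsymbol{\mathsf{p}}_n$, $\boldsymbol{\mathsf{p}}_n^*$, $\boldsymbol{\mathsf{q}}_n$, $\boldsymbol{\mathsf{t}}_n^*$, $\Delta_n$, and the $\ell^2$-summability of the increments, none of which is affected by changing the update rule; this is precisely why the proof reduces to replacing each appeal to a part of Proposition~\ref{p:3404} in the proof of Theorem~\ref{t:1} by the corresponding part of Proposition~\ref{p:4738}.
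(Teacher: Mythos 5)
Your proposal is correct and follows exactly the route the paper takes: its proof of Theorem~\ref{t:2} is precisely ``proceed as in the proof of Theorem~\ref{t:1}, replacing each appeal to Proposition~\ref{p:3404} by the corresponding part of Proposition~\ref{p:4738},'' which is what you have carried out, including the correct identification of Algorithm~\ref{algo:3} as an instance of \eqref{e:9060} and the use of Proposition~\ref{p:4738}\ref{p:4738i} for boundedness and \ref{p:4738iv} for the strong limit $\proj_{\zer\sad}\boldsymbol{\mathsf{x}}_0$.
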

\begin{proof}
Proceed as in the proof of Theorem~\ref{t:1}
and use Proposition~\ref{p:4738} instead of
Proposition~\ref{p:3404}.
\end{proof}

\section{Applications}
\label{sec:4}

In nonlinear analysis and optimization, problems with multiple
variables occur in areas such as 
game theory \cite{Atto08,Bric13,YiPa19},
evolution inclusions \cite{Sico10},
traffic equilibrium \cite{Sico10,Fuku96},
domain decomposition \cite{Atto16},
machine learning \cite{Bach12,Bric19},
image recovery \cite{Bric09,Jmiv11},
infimal-convolution regularization \cite{Siop13}, 
statistics \cite{Ejst20,Bien20},
neural networks \cite{Neur20},
and variational inequalities \cite{Fuku96}.
The numerical methods used in the above papers are limited to
special cases of Problem~\ref{prob:1} and they do not perform block
iterations and they operate in synchronous mode. The methods
presented in Theorems~\ref{t:1}~and~\ref{t:2} provide a unified
treatment of these problems as well as extensions, within a
considerably more flexible algorithmic framework. In this section,
we illustrate this in the context of variational inequalities and
multivariate minimization. Below we present only the applications
of Theorem~\ref{t:1} as similar applications of Theorem~\ref{t:2}
follow using similar arguments. 

\newpage
\subsection{Application to variational inequalities}
\label{sec:41}

The standard variational inequality problem associated with a
closed convex subset $D$ of a real Hilbert space $\GG$ and a
maximally monotone operator $B\colon\GG\to\GG$ is to 
\begin{equation}
\label{e:2z3}
\text{find}\;\:\overline{y}\in D
\;\:\text{such that}\;\:
(\forall y\in D)\;\;\scal{\overline{y}-y}{B\overline{y}}\leq 0.
\end{equation}
Classical methods require the ability to project onto $D$ and
specific assumptions on $B$ such as cocoercivity, Lipschitz
continuity, or the ability to compute the resolvent
\cite{Livre1,Facc03,Tsen00}. 
Let us consider a refined version of \eqref{e:2z3} in which $B$
and $D$ are decomposed into basic components, and for which these
classical methods are not applicable. 

\begin{problem}
\label{prob:8}
Let $I$ be a nonempty finite set and let
$(\HH_i)_{i\in I}$ and $\GG$ be real Hilbert spaces.
For every $i\in I$, let $E_i$ and $F_i$
be closed convex subsets of $\HH_i$ such that
$E_i\cap F_i\neq\emp$ and let $L_i\colon\HH_i\to\GG$ be linear and
bounded. In addition, let 
$B^\MM\colon\GG\to 2^\GG$ be at most single-valued and
maximally monotone, let $B^\CC\colon\GG\to\GG$
be cocoercive with constant $\beta^\CC\in\RPP$, and let
$B^\LL\colon\GG\to\GG$ be Lipschitzian with constant
$\beta^\LL\in\RP$. The objective is to
\begin{equation}
\label{e:2327}
\text{find}\;\:\overline{y}\in\sum_{i\in I}L_i(E_i\cap F_i)
\;\:\text{such that}\;\:
\bigg(\forall y\in\sum_{i\in I}L_i(E_i\cap F_i)\bigg)\;\;
\sscal{\overline{y}-y}{B^\MM\overline{y}+B^\CC\overline{y}
+B^\LL\overline{y}}\leq 0.
\end{equation}
\end{problem}

To motivate our analysis, let us consider an illustration of
\eqref{e:2327}.

\begin{example}
\label{ex:24}
Let $I$ be a nonempty finite set and let $(\ZZ_i)_{i\in I}$ and
$\KK$ be real Hilbert spaces. 
For every $i\in I$, let $S_i\subset\ZZ_i$ be 
closed and convex, and let $M_i\colon\ZZ_i\to\KK$ be
linear and bounded. 
In addition, let $f\in\Gamma_0(\KK)$ be G\^ateaux differentiable
on $\dom\partial f$,
let $\varphi\colon\KK\to\RR$ be convex and differentiable with a
Lipschitzian gradient, let $\WW$ be a real Hilbert space, let
$g\in\Gamma_0(\WW)$ be such that $g^*$ is G\^ateaux
differentiable on $\dom\partial g^*$,
let $D$ be a closed convex subset of $\WW$ such that
\begin{equation}
\label{e:gd}
0\in\sri(D-\dom g^*),
\end{equation}
let $h\in\Gamma_0(\WW)$ be
strongly convex, and let $L\colon\KK\to\WW$ be linear and bounded.
Note that, by \cite[Theorem~18.15]{Livre1},
$h^*$ is differentiable on $\WW$ and $\nabla h^*$ is cocoercive.
The objective is to solve the Kuhn--Tucker problem
\begin{multline}
\label{e:d4}
\text{find}\;\:(\overline{x},\overline{v}^*)\in\KK\oplus\WW
\;\:\text{such that}\\
\begin{bmatrix}
0\\
0
\end{bmatrix}
\in
\underbrace{\begin{bmatrix}
\nabla f&0\\
0&\nabla g^*\\
\end{bmatrix}}_{\text{monotone}}
\begin{bmatrix}
\overline{x}\\
\overline{v}^*
\end{bmatrix}
+
\underbrace{\begin{bmatrix}
\nabla\varphi&0\\
0&\nabla h^*\\
\end{bmatrix}}_{\text{cocoercive}}
\begin{bmatrix}
\overline{x}\\
\overline{v}^*
\end{bmatrix}
+
\underbrace{\begin{bmatrix}
0&L^*\\
-L&0\\
\end{bmatrix}}_{\text{Lipschitzian}}
\begin{bmatrix}
\overline{x}\\
\overline{v}^*
\end{bmatrix}
+
\underbrace{\begin{bmatrix}
N_C&0\\
0&N_D\\
\end{bmatrix}}_{\text{normal cone}}
\begin{bmatrix}
\overline{x}\\
\overline{v}^*
\end{bmatrix},
\end{multline}
where it is assumed that
\begin{equation}
\label{e:d5}
C=\sum_{i\in I}M_i(S_i)\;\:\text{is closed and}\;\:
0\in\sri(C-\dom f).
\end{equation}
Since $\dom h^*=\WW$, we deduce from \eqref{e:gd} and
\cite[Proposition~15.7(i)]{Livre1}
that $g\infconv h\infconv\sigma_D\in\Gamma_0(\WW)$. 
It follows from standard convex calculus \cite{Livre1} that a
solution $(\overline{x},\overline{v}^*)$ to \eqref{e:d4} provides a
solution $\overline{x}$ to
\begin{equation}
\label{e:2329}
\minimize{x\in C}
{f(x)+\big(g\infconv h\infconv\sigma_D\big)(Lx)+\varphi(x)},
\end{equation}
as well as a solution $\overline{v}^*$ to the associated
Fenchel--Rockafellar dual 
\begin{equation}
\label{e:2375}
\minimize{v^*\in D}
{\big((f+\varphi)^*\infconv\sigma_C\big)(-L^*v^*)
+g^*(v^*)+h^*(v^*)}.
\end{equation}
To see that \eqref{e:d4}--\eqref{e:d5} is a special case of
Problem~\ref{prob:8}, set $\GG=\KK\oplus\WW$ and 
\begin{equation}
\label{e:24j}
(\forall i\in I)\quad
L_i\colon\HH_i=\ZZ_i\oplus\WW\to\GG\colon
(z_i,v^*)\mapsto(M_iz_i,v^*/\card I),\;\;
E_i=S_i\times D,\;\;\text{and}\;\; F_i=\ZZ_i\times\WW.
\end{equation}
Note that
\begin{equation}
\label{e:CD}
C\times D=\sum_{i\in I}L_i(E_i\cap F_i).
\end{equation}
Further, in view of \cite[Proposition~17.31(i)]{Livre1},
let us define
\begin{equation}
\label{e:2r}
\begin{cases}
B^\MM\colon\GG\to 2^{\GG}\colon(x,v^*)\mapsto
\partial(f\oplus g^*)(x,v^*)=
\begin{cases}
\big(\nabla f(x),\nabla g^*(v^*)\big),&\text{if}\;\:
(x,v^*)\in\dom\partial f\times\dom\partial g^*;\\
\emp,&\text{otherwise}
\end{cases}
\\
B^\CC\colon\GG\to\GG\colon(x,v^*)\mapsto
\big(\nabla\varphi(x),\nabla h^*(v^*)\big)\\
B^\LL\colon\GG\to\GG\colon (x,v^*)\mapsto (L^*v^*,{-}Lx).
\end{cases}
\end{equation}
Then $B^\MM$ is maximally monotone \cite[Theorem~20.25]{Livre1}, 
$B^\CC$ is cocoercive \cite[Corollary~18.17]{Livre1}, and 
$B^\LL$ is a skew bounded linear operator, hence monotone and
Lipschitzian \cite[Example~20.35]{Livre1}. 
In turn, combining \eqref{e:24j} and \eqref{e:2r},
we conclude that \eqref{e:d4} can be written as
\begin{equation}
\text{find}\;\:(\overline{x},\overline{v}^*)\in\KK\oplus\WW
\;\:\text{such that}\;\:
(0,0)\in B^\MM(\overline{x},\overline{v}^*)
+B^\CC(\overline{x},\overline{v}^*)
+B^\LL(\overline{x},\overline{v}^*)
+N_{C\times D}(\overline{x},\overline{v}^*)
\end{equation}
which, in the light of \eqref{e:CD}, fits the format of
\eqref{e:2327}. Special cases of \eqref{e:2329} involving
minimization over Minkowski sum of sets are found in areas such as
signal and image processing \cite{Aujo05,Smms05,Onos14},
location and network problems \cite{Namm14}, as well as
robotics and computational mechanics \cite{Wang20}.
\end{example}

We are going to reformulate Problem~\ref{prob:8} as
a realization of Problem~\ref{prob:1} and solve it via a
block-iterative method derived from Algorithm~\ref{algo:1}. 
In addition, our approach employs the individual
projection operators onto the sets $(E_i)_{i\in I}$ and 
$(F_i)_{i\in I}$, and the resolvents of the operator $B^\MM$.
We are not aware of any method which features such flexibility. 
For instance, consider the special case discussed in
\cite[Section~4]{Fuku96}, where $\GG=\RR^N$, $B^\CC=B^\LL=0$,
$T\colon\RR^N\to\RR^M$ is a linear operator, and,
for every $i\in I$, $\HH_i=\RR^N$, $L_i=\Id$,
$E_i=T^{-1}(\{d_i\})$ for some $d_i\in\RR^M$,
and $F_i=\RP^N$. There, the evaluations of all the projectors 
$(\proj_{E_i\cap F_i})_{i\in I}$ are required at every iteration.
Note that there are no closed-form expressions for 
$(\proj_{E_i\cap F_i})_{i\in I}$ in general.

\begin{corollary}
\label{c:5}
Consider the setting of Problem~\ref{prob:8}.
Let $\sigma\in\left]1/(4\beta^{\CC}),\pinf\right[$,
$\varepsilon\in\left]0,\min\{1,1/(\beta^{\LL}
+\sigma)\}\right[$,
and $K=I\cup\{\overline{k}\}$, where
$\overline{k}\notin I$. Suppose that Assumption~\ref{a:1} is in
force, together with the following:
\begin{enumerate}[label={\rm[\alph*]}]
\item
For every $i\in I$ and every $n\in\NN$, 
$\{\gamma_{i,n},\mu_{i,n},\nu_{i,n}\}\subset
\left[\varepsilon,1/\sigma\right]$ and 
$\sigma_{i,n}\in\left[\varepsilon,1/\varepsilon\right]$.
\item
For every $n\in\NN$, 
$\lambda_n\in\left[\varepsilon,2-\varepsilon\right]$,
$\mu_{\overline{k},n}\in
\left[\varepsilon,1/(\beta^{\LL}+\sigma)\right]$,
$\nu_{\overline{k},n}\in\left[\varepsilon,1/\sigma\right]$, 
and $\sigma_{\overline{k},n}\in
\left[\varepsilon,1/\varepsilon\right]$.
\item
For every $i\in I$, 
$\{x_{i,0},y_{i,0},z_{i,0},v_{i,0}^*\}\subset\HH_i$;
$\{y_{\overline{k},0},z_{\overline{k},0},
v_{\overline{k},0}^*\}\subset\GG$.
\end{enumerate}
Iterate
\begin{equation}
\label{e:7133}
\begin{array}{l}
\text{for}\;n=0,1,\ldots\\
\left\lfloor
\begin{array}{l}
\text{for every}\;i\in I_n\\
\left\lfloor
\begin{array}{l}
l_{i,n}^*=v_{i,n}^*+L_i^*v_{\overline{k},n}^*;\\
a_{i,n}=\proj_{E_i}\big(x_{i,n}-\gamma_{i,n}l_{i,n}^*\big);\\
a_{i,n}^*=\gamma_{i,n}^{-1}(x_{i,n}-a_{i,n})-l_{i,n}^*;\\
\xi_{i,n}=\|a_{i,n}-x_{i,n}\|^2;
\end{array}
\right.\\
\text{for every}\;i\in I\smallsetminus I_n\\
\left\lfloor
\begin{array}{l}
a_{i,n}=a_{i,n-1};\;a_{i,n}^*=a_{i,n-1}^*;\;
\xi_{i,n}=\xi_{i,n-1};\\
\end{array}
\right.\\
\text{for every}\;k\in K_n\\
\left\lfloor
\begin{array}{l}
\text{if}\;k\in I\\
\left\lfloor
\begin{array}{l}
b_{k,n}=\proj_{F_k}\big(y_{k,n}+\mu_{k,n}v_{k,n}^*\big);\\
e_{k,n}^*=\sigma_{k,n}(x_{k,n}-y_{k,n}-z_{k,n})+v_{k,n}^*;\\
q_{k,n}^*=\mu_{k,n}^{-1}(y_{k,n}-b_{k,n})+v_{k,n}^*-e_{k,n}^*;\\
e_{k,n}=b_{k,n}-a_{k,n};
\end{array}
\right.\\
\text{if}\;k=\overline{k}\\
\left\lfloor
\begin{array}{l}
u_{k,n}^*=v_{k,n}^*-B^{\LL}y_{k,n};\\
b_{k,n}=J_{\mu_{k,n}B^{\MM}}\big(y_{k,n}
+\mu_{k,n}(u_{k,n}^*-B^{\CC}y_{k,n})\big);\\
e_{k,n}^*=\sigma_{k,n}\big(\sum_{i\in I}L_ix_{i,n}
-y_{k,n}-z_{k,n}\big)+v_{k,n}^*;\\
q_{k,n}^*=\mu_{k,n}^{-1}(y_{k,n}-b_{k,n})
+u_{k,n}^*+B^{\LL}b_{k,n}-e_{k,n}^*;\\
e_{k,n}=b_{k,n}-\sum_{i\in I}L_ia_{i,n};
\end{array}
\right.\\
t_{k,n}^*=\nu_{k,n}^{-1}z_{k,n}+v_{k,n}^*-e_{k,n}^*;\\
\eta_{k,n}=\|b_{k,n}-y_{k,n}\|^2+\|z_{k,n}\|^2;
\end{array}
\right.\\
\text{for every}\;k\in K\smallsetminus K_n\\
\left\lfloor
\begin{array}{l}
b_{k,n}=b_{k,n-1};\;
e_{k,n}^*=e_{k,n-1}^*;\;
q_{k,n}^*=q_{k,n-1}^*;\;
t_{k,n}^*=t_{k,n-1}^*;\;
\eta_{k,n}=\eta_{k,n-1};\\
\text{if}\;k\in I\\
\left\lfloor
\begin{array}{l}
e_{k,n}=b_{k,n}-a_{k,n};
\end{array}
\right.\\
\text{if}\;k=\overline{k}\\
\left\lfloor
\begin{array}{l}
e_{k,n}=b_{k,n}-\sum_{i\in I}L_ia_{i,n};
\end{array}
\right.\\[2mm]
\end{array}
\right.\\
\text{for every}\;i\in I\\
\left\lfloor
\begin{array}{l}
p_{i,n}^*=a_{i,n}^*+e_{i,n}^*+L_i^*e_{\overline{k},n}^*;
\end{array}
\right.\\
\begin{aligned}
\Delta_n&=\textstyle
{-}(4\beta^\CC)^{-1}\big(\sum_{i\in I}\xi_{i,n}
+\sum_{k\in K}\eta_{k,n}\big)
+\sum_{i\in I}\scal{x_{i,n}-a_{i,n}}{p_{i,n}^*}\\
&\textstyle
\quad\;+\sum_{k\in K}\big(\scal{y_{k,n}-b_{k,n}}{q_{k,n}^*}
+\scal{z_{k,n}}{t_{k,n}^*}
+\scal{e_{k,n}}{v_{k,n}^*-e_{k,n}^*}\big);
\end{aligned}\\
\text{if}\;\Delta_n>0\\
\left\lfloor
\begin{array}{l}
\theta_n=\lambda_n\Delta_n/
\big(\sum_{i\in I}\|p_{i,n}^*\|^2+\sum_{k\in K}\big(
\|q_{k,n}^*\|^2+\|t_{k,n}^*\|^2+\|e_{k,n}\|^2\big)\big);\\
\text{for every}\;i\in I\\
\left\lfloor
\begin{array}{l}
x_{i,n+1}=x_{i,n}-\theta_np_{i,n}^*;
\end{array}
\right.\\
\text{for every}\;k\in K\\
\left\lfloor
\begin{array}{l}
y_{k,n+1}=y_{k,n}-\theta_nq_{k,n}^*;\;
z_{k,n+1}=z_{k,n}-\theta_nt_{k,n}^*;\;
v_{k,n+1}^*=v_{k,n}^*-\theta_ne_{k,n};
\end{array}
\right.\\[1mm]
\end{array}
\right.\\
\text{else}\\
\left\lfloor
\begin{array}{l}
\text{for every}\;i\in I\\
\left\lfloor
\begin{array}{l}
x_{i,n+1}=x_{i,n};
\end{array}
\right.\\
\text{for every}\;k\in K\\
\left\lfloor
\begin{array}{l}
y_{k,n+1}=y_{k,n};\;z_{k,n+1}=z_{k,n};\;v_{k,n+1}^*=v_{k,n}^*.
\end{array}
\right.\\[1mm]
\end{array}
\right.\\[9.5mm]
\end{array}
\right.
\end{array}
\end{equation}
Furthermore, suppose that \eqref{e:2327} has a solution and that
\begin{equation}
\label{e:3437}
(\forall i\in I)\quad N_{E_i\cap F_i}=N_{E_i}+N_{F_i}.
\end{equation}
Then there exists
$(\overline{x}_i)_{i\in I}\in\bigoplus_{i\in I}\HH_i$
such that $\sum_{i\in I}L_i\overline{x}_i$ solves \eqref{e:2327}
and, for every $i\in I$, $x_{i,n}\weakly\overline{x}_i$ and
$a_{i,n}\weakly\overline{x}_i$.
\end{corollary}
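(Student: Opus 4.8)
The plan is to exhibit Problem~\ref{prob:8} as a specific instance of Problem~\ref{prob:1}, to check that Algorithm~\ref{algo:1} then collapses to \eqref{e:7133}, and to read off the conclusion from Theorem~\ref{t:1}. For the reformulation I keep the primal index set $I$, take $K=I\cup\{\overline{k}\}$, and set $\GG_k=\HH_k$ for $k\in I$ and $\GG_{\overline{k}}=\GG$. For every $i\in I$ I put $A_i=N_{E_i}$, $C_i=Q_i=0$, $R_i=0$; for every $k\in I$, $B_k^{\MM}=N_{F_k}$ and $B_k^{\CC}=B_k^{\LL}=0$; $B_{\overline{k}}^{\MM}=B^{\MM}$, $B_{\overline{k}}^{\CC}=B^{\CC}$, $B_{\overline{k}}^{\LL}=B^{\LL}$; for every $k\in K$, $D_k^{\MM}=N_{\{0\}}$ and $D_k^{\CC}=D_k^{\LL}=0$; $L_{kk}=\Id$ for $k\in I$, $L_{\overline{k}i}=L_i$ for $i\in I$, and $L_{ki}=0$ for all other pairs; and $s_i^*=0$, $r_k=0$. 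All the zero operators are cocoercive with any positive constant, so I may take every free cocoercivity constant equal to $\beta^{\CC}$, whence $\alpha=\beta^{\CC}$; moreover $\chi=0$, every Lipschitz constant $\alpha_i^{\LL}$, $\delta_k^{\LL}$ and $\beta_k^{\LL}$ ($k\in I$) vanishes, and $\beta_{\overline{k}}^{\LL}=\beta^{\LL}$. With these identifications one verifies directly that Problem~\ref{prob:1}\ref{prob:1a}--\ref{prob:1e} holds, that the parameter constraints in Corollary~\ref{c:5}\,[a]--[c] are exactly those of Assumption~\ref{a:2} (in particular $\sigma>1/(4\beta^{\CC})=1/(4\alpha)$ and $1/\varepsilon>\beta^{\LL}+\sigma=\max\{\alpha_i^{\LL}+\chi+\sigma,\beta_k^{\LL}+\sigma,\delta_k^{\LL}+\sigma\}_{i\in I,k\in K}$), that Assumption~\ref{a:1} is the standing hypothesis, and that Assumption~\ref{a:3} holds with $T=0$, i.e., $\pi_i\colon n\mapsto n$ and $\omega_k\colon n\mapsto n$.

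Next I would check that \eqref{e:7133} is literally \eqref{e:long1} for this instance. This is bookkeeping: $J_{\gamma_{i,n}A_i}=\proj_{E_i}$, $J_{\mu_{k,n}B_k^{\MM}}=\proj_{F_k}$ for $k\in I$, and $J_{\nu_{k,n}D_k^{\MM}}\equiv\proj_{\{0\}}\equiv 0$, so $d_{k,n}\equiv 0$; the forward terms $C_ix_{i,n}$, $Q_ix_{i,n}$, $R_i\boldsymbol{x}_n$, $B_k^{\CC}$ and $B_k^{\LL}$ (for $k\in I$), $D_k^{\CC}$ and $D_k^{\LL}$ all drop out; and because $L_{ki}=0$ whenever $k,i\in I$ with $k\neq i$, the sums $\sum_{k\in K}L_{ki}^*(\cdot)$ and $\sum_{i\in I}L_{ki}(\cdot)$ reduce to the two-term expressions $v_{i,n}^*+L_i^*v_{\overline{k},n}^*$, $x_{k,n}$ or $\sum_{i\in I}L_ix_{i,n}$, etc., appearing in \eqref{e:7133}. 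The scalars $\xi_{i,n}$, $\eta_{k,n}$, $\Delta_n$ and the update maps then coincide verbatim (with $\alpha=\beta^{\CC}$).

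The only substantive point is the solvability hypothesis of Theorem~\ref{t:1}: we are given that \eqref{e:2327} has a solution and must deduce that \eqref{e:1d} does, and this is where \eqref{e:3437} enters. First, since $D_k^{\MM}+D_k^{\CC}+D_k^{\LL}=N_{\{0\}}$ and $N_{\{0\}}^{-1}$ is the constant map $0$, one has $(B_k^{\MM}+B_k^{\CC}+B_k^{\LL})\infconv(D_k^{\MM}+D_k^{\CC}+D_k^{\LL})=B_k^{\MM}+B_k^{\CC}+B_k^{\LL}$, so \eqref{e:1p} for this instance reads: for every $i\in I$, $0\in N_{E_i}\overline{x}_i+N_{F_i}\overline{x}_i+L_i^*\big((B^{\MM}+B^{\CC}+B^{\LL})\overline{y}\big)$ with $\overline{y}=\sum_{j\in I}L_j\overline{x}_j$; by \eqref{e:3437} this is the same as $-L_i^*(B^{\MM}+B^{\CC}+B^{\LL})\overline{y}\in N_{E_i\cap F_i}\overline{x}_i$, and testing against arbitrary $x_i\in E_i\cap F_i$ while freezing the other blocks shows that \eqref{e:1p} is solvable iff \eqref{e:2327} is (the decomposition $\overline{y}=\sum_iL_i\overline{x}_i$ with $\overline{x}_i\in E_i\cap F_i$ being available precisely because $\overline{y}$ ranges over $\sum_iL_i(E_i\cap F_i)$). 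Second, the instance falls under the last alternative of Proposition~\ref{p:6}\ref{p:6v}: $I\subset K$, $K\smallsetminus I=\{\overline{k}\}$, the operator $(B^{\MM}+B^{\CC}+B^{\LL})\infconv N_{\{0\}}=B^{\MM}+B^{\CC}+B^{\LL}$ is single-valued, and $L_{ki}=0$ for distinct $k,i\in I$; hence $\mathscr{P}\neq\emp\Rightarrow\mathsf{Z}\neq\emp$, and Proposition~\ref{p:6}\ref{p:6iv} turns this into $\mathscr{P}\neq\emp\Rightarrow\mathscr{D}\neq\emp$. So \eqref{e:2327} solvable implies \eqref{e:1d} solvable, and Theorem~\ref{t:1} applies.

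Finally, Theorem~\ref{t:1} furnishes $\overline{\boldsymbol{x}}=(\overline{x}_i)_{i\in I}$ solving \eqref{e:1p} with $x_{i,n}\weakly\overline{x}_i$ and $a_{i,n}\weakly\overline{x}_i$ for every $i\in I$, and the equivalence in the previous paragraph (once more via \eqref{e:3437}) shows that $\sum_{i\in I}L_i\overline{x}_i$ solves \eqref{e:2327}, which is the assertion. I expect the main effort to be the verification in the second paragraph that \eqref{e:7133} is the present specialization of \eqref{e:long1}; the one conceptually substantive step is the passage between the variational inequality \eqref{e:2327} and the inclusion \eqref{e:1p}, together with the check of Proposition~\ref{p:6}\ref{p:6v}, which is unavoidable because Theorem~\ref{t:1} requires dual rather than merely primal solvability.
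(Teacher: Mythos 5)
Your proposal is correct and follows essentially the same route as the paper: the same specialization of Problem~\ref{prob:1} (normal cones $N_{E_i}$, $N_{F_k}$, the extra index $\overline{k}$ carrying $B^{\MM}+B^{\CC}+B^{\LL}$, and $D_k^{\MM}=\{0\}^{-1}$ killing the parallel sums), the same verification that \eqref{e:7133} is the corresponding instance of \eqref{e:long1} with $\pi_i=\omega_k=\mathrm{Id}$, the same equivalence between \eqref{e:2327} and the primal inclusion via \eqref{e:3437} and $N_{\bigtimes_i(E_i\cap F_i)}=\bigtimes_iN_{E_i\cap F_i}$, and the same appeal to Proposition~\ref{p:6}\ref{p:6v}\ref{p:6vd} to pass from primal to dual solvability before invoking Theorem~\ref{t:1}\ref{t:1iii}.
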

\begin{proof}
Set $\HHH=\bigoplus_{i\in I}\HH_i$. Let us consider the problem
\begin{equation}
\label{e:3877p}
\text{find}\;\:\overline{\boldsymbol{x}}\in\HHH
\;\:\text{such that}\;\:
(\forall i\in I)\;\;
0\in N_{E_i}\overline{x}_i+N_{F_i}\overline{x}_i+
L_i^*(B^\MM+B^\CC+B^\LL)
\Bigg(\sum_{j\in I}L_j\overline{x}_j\Bigg)
\end{equation}
together with the associated dual problem
\begin{equation}
\label{e:3877d}
\text{find}\;\:
\big(\overline{\boldsymbol{x}}^*,\overline{v}^*\big)
\in\HHH\oplus\GG\;\:\text{such that}\;\:
\big(\exi\boldsymbol{x}\in\HHH\big)\;\;
\begin{cases}
(\forall i\in I)\;\;
{-}\overline{x}_i^*-L_i^*\overline{v}^*\in N_{E_i}x_i
\;\:\text{and}\;\:\overline{x}_i^*\in N_{F_i}x_i\\
\overline{v}^*
=(B^\MM+B^\CC+B^\LL)\big(\sum_{j\in I}L_jx_j\big).
\end{cases}
\end{equation}
Denote by $\mathscr{P}$ and $\mathscr{D}$ the sets of
solutions to \eqref{e:3877p} and \eqref{e:3877d}, respectively.
We observe that the primal-dual problem
\eqref{e:3877p}--\eqref{e:3877d} is a special case
of Problem~\ref{prob:1} with
\begin{equation}
(\forall i\in I)\quad A_i=N_{E_i},\quad
C_i=Q_i=0,\quad R_i=0,
\quad\text{and}\quad s_i^*=0,
\end{equation}
and
\begin{equation}
\label{e:9834}
(\forall k\in K)\quad
\begin{cases}
\GG_k=\HH_k,\;\:B_k^{\MM}=N_{F_k},\;\:
B_k^{\CC}=B_k^{\LL}=0\;\:\text{if}\;\:k\in I;\\
\GG_{\overline{k}}=\GG,\;\:
B_{\overline{k}}^{\MM}=B^{\MM},\;\:
B_{\overline{k}}^{\CC}=B^{\CC},\;\:
B_{\overline{k}}^{\LL}=B^{\LL}\\
D_k^{\MM}=\{0\}^{-1},\;\:D_k^{\CC}=D_k^{\LL}=0,\;\:r_k=0\\
(\forall j\in I)\;\; L_{kj}=
\begin{cases}
\Id,&\text{if}\;\:k=j;\\
0,&\text{if}\;\:k\in I\;\:\text{and}\;\:k\neq j;\\
L_j,&\text{if}\;\:k=\overline{k}.
\end{cases}
\end{cases}
\end{equation}
Further, we have
\begin{equation}
\begin{cases}
(\forall i\in I)(\forall n\in\NN)\;\:
J_{\gamma_{i,n}A_i}=\proj_{E_i}\\
(\forall k\in K)(\forall n\in\NN)\;\:
J_{\nu_{k,n}D_k^{\MM}}=0
\;\:\text{and}\;\:J_{\mu_{k,n}B_k^{\MM}}=
\begin{cases}
\proj_{F_k},&\text{if}\;\:k\in I;\\
J_{\mu_{k,n}B^{\MM}},&\text{if}\;\:k=\overline{k}.
\end{cases}
\end{cases}
\end{equation}
Therefore, \eqref{e:7133} is a realization of
Algorithm~\ref{algo:1}
in the context of \eqref{e:3877p}--\eqref{e:3877d}.
Now define
$\boldsymbol{D}=\bigtimes_{i\in I}(E_i\cap F_i)$ and
$\boldsymbol{L}\colon\HHH\to\GG\colon
\boldsymbol{x}\mapsto\sum_{i\in I}L_ix_i$.
Then $\boldsymbol{L}^*\colon\GG\to\HHH\colon
y^*\mapsto(L_i^*y^*)_{i\in I}$.
Hence, by \eqref{e:2327}, \cite[Proposition~16.9]{Livre1},
and \eqref{e:3437},
\begin{align}
\label{e:6368}
&(\forall\overline{y}\in\GG)\quad
\overline{y}\;\:\text{solves \eqref{e:2327}}
\nonumber\\
&\hspace{22mm}
\Leftrightarrow
(\exi\overline{\boldsymbol{x}}\in\boldsymbol{D})\;\:
\begin{cases}
\overline{y}=\boldsymbol{L}\overline{\boldsymbol{x}}\\
(\forall\boldsymbol{x}\in\boldsymbol{D})\;\;
\sscal{\boldsymbol{L}\overline{\boldsymbol{x}}
-\boldsymbol{L}\boldsymbol{x}}{
(B^{\MM}+B^{\CC}+B^{\LL})(
\boldsymbol{L}\overline{\boldsymbol{x}})}\leq 0
\end{cases}
\nonumber\\
&\hspace{22mm}
\Leftrightarrow
(\exi\overline{\boldsymbol{x}}\in\boldsymbol{D})\;\:
\begin{cases}
\overline{y}=\boldsymbol{L}\overline{\boldsymbol{x}}\\
(\forall\boldsymbol{x}\in\boldsymbol{D})\;\;
\sscal{\overline{\boldsymbol{x}}-\boldsymbol{x}}{
\boldsymbol{L}^*\big((B^{\MM}+B^{\CC}+B^{\LL})
(\boldsymbol{L}\overline{\boldsymbol{x}})\big)
}\leq 0
\end{cases}
\nonumber\\
&\hspace{22mm}
\Leftrightarrow
(\exi\overline{\boldsymbol{x}}\in\HHH)\;\:
\begin{cases}
\overline{y}=\boldsymbol{L}\overline{\boldsymbol{x}}\\
\boldsymbol{0}\in
N_{\boldsymbol{D}}\overline{\boldsymbol{x}}
+\boldsymbol{L}^*\big((B^{\MM}+B^{\CC}+B^{\LL})
(\boldsymbol{L}\overline{\boldsymbol{x}})\big)
\end{cases}
\nonumber\\
&\hspace{22mm}
\Leftrightarrow
(\exi\overline{\boldsymbol{x}}\in\HHH)\;\:
\begin{cases}
\overline{y}=\boldsymbol{L}\overline{\boldsymbol{x}}\\
(\forall i\in I)\;\;
0\in N_{E_i\cap F_i}\overline{x}_i
+L_i^*(B^\MM+B^\CC+B^\LL)
\big(\sum_{j\in I}L_j\overline{x}_j\big)
\end{cases}
\nonumber\\
&\hspace{22mm}
\Leftrightarrow
(\exi\overline{\boldsymbol{x}}\in\HHH)\;\:
\begin{cases}
\overline{y}=\boldsymbol{L}\overline{\boldsymbol{x}}\\
(\forall i\in I)\;\;
0\in N_{E_i}\overline{x}_i+N_{F_i}\overline{x}_i
+L_i^*(B^\MM+B^\CC+B^\LL)
\big(\sum_{j\in I}L_j\overline{x}_j\big)
\end{cases}
\nonumber\\
&\hspace{22mm}
\Leftrightarrow
(\exi\overline{\boldsymbol{x}}\in\mathscr{P})\;\;
\overline{y}=\boldsymbol{L}\overline{\boldsymbol{x}}.
\end{align}
In turn, $\mathscr{P}\neq\emp$ since \eqref{e:2327} has a 
solution. Therefore, in view of \eqref{e:9834}, 
Proposition~\ref{p:6}\ref{p:6v}\ref{p:6vd}
yields $\mathscr{D}\neq\emp$. As a result,
Theorem~\ref{t:1}\ref{t:1iii} asserts that there exists
$(\overline{x}_i)_{i\in I}\in\mathscr{P}$ such that,
for every $i\in I$, $x_{i,n}\weakly\overline{x}_i$ and
$a_{i,n}\weakly\overline{x}_i$.
Finally, using \eqref{e:6368},
we conclude that $\sum_{i\in I}L_i\overline{x}_i$
solves \eqref{e:2327}.
\end{proof}

\begin{remark}
\label{r:j1}
Theorem~\ref{t:1} allows us to tackle other types of variational
inequalities. For instance, let $(\HH_i)_{i\in I}$ be a finite
family of real Hilbert spaces and set 
$\HHH=\bigoplus_{i\in I}\HH_i$. 
For every $i\in I$, let $\varphi_i\in\Gamma_0(\HH_i)$
and let $R_i\colon\HHH\to\HH_i$
be such that Problem~\ref{prob:1}\ref{prob:1e} holds.
The objective is to
\begin{equation}
\label{e:j1}
\text{find}\;\:\overline{\boldsymbol{x}}\in\HHH
\;\:\text{such that}\;\:(\forall i\in I)\;\;
0\in\partial\varphi_i(\overline{x}_i)
+R_i\overline{\boldsymbol{x}}.
\end{equation}
This simple instantiation of Problem~\ref{prob:1} shows up in
neural networks \cite{Neur20} and in game theory
\cite{Atto08,Bric13}. Thanks to Theorem~\ref{t:1}, it can be
solved using an asynchronous block-iterative strategy, which is not
possible with current splitting techniques such as those of
\cite{MaPr18,John20}. 
\end{remark}

\subsection{Application to multivariate minimization}
\label{sec:45}

We consider a composite multivariate minimization problem involving
various types of convex functions and combinations between them.

\begin{problem}
\label{prob:2}
Let $(\HH_i)_{i\in I}$ and $(\GG_k)_{k\in K}$ be finite families of
real Hilbert spaces, and set $\HHH=\bigoplus_{i\in I}\HH_i$ and
$\GGG=\bigoplus_{k\in K}\GG_k$. For every $i\in I$ and every 
$k\in K$, let $f_i\in\Gamma_0(\HH_i)$, let $\alpha_i\in\RPP$, let
$\varphi_i\colon\HH_i\to\RR$ be convex and differentiable
with a $(1/\alpha_i)$-Lipschitzian gradient,
let $g_k\in\Gamma_0(\GG_k)$, let $h_k\in\Gamma_0(\GG_k)$,
let $\beta_k\in\RPP$, let $\psi_k\colon\GG_k\to\RR$
be convex and differentiable with a $(1/\beta_k)$-Lipschitzian
gradient, and suppose that $L_{ki}\colon\HH_i\to\GG_k$
is linear and bounded. In addition, 
let $\chi\in\RP$ and let $\Theta\colon\HHH\to\RR$
be convex and differentiable with a $\chi$-Lipschitzian gradient.
The objective is to
\begin{equation}
\label{e:min-p}
\minimize{\boldsymbol{x}\in\HHH}{
\Theta(\boldsymbol{x})
+\sum_{i\in I}\big(f_i(x_i)+\varphi_i(x_i)\big)
+\sum_{k\in K}\big((g_k+\psi_k)\infconv h_k\big)\Bigg(
\sum_{j\in I}L_{kj}x_j\Bigg)}.
\end{equation}
\end{problem}

Special cases of Problem~\ref{prob:2} are found in various
contexts, e.g., \cite{Bric09,Jmiv11,Siop13,MaPr18,Hint06,John20}.
Formulation \eqref{e:min-p} brings together these disparate
problems and the following algorithm makes it possible to solve
them in an asynchronous block-iterative fashion in full generality.

\newpage

\begin{algorithm}
\label{algo:2}
Consider the setting of Problem~\ref{prob:2} and suppose that
Assumption~\ref{a:1}--\ref{a:3} is in force. Set
$\alpha=\min\{\alpha_i,\beta_k\}_{i\in I,k\in K}$, let
$\sigma\in\left]1/(4\alpha),\pinf\right[$, and let
$\varepsilon\in\left]0,\min\{1,1/(\chi+\sigma)\}\right[$. For
every $i\in I$, every $k\in K$, and every $n\in\NN$, let 
$\gamma_{i,n}\in\left[\varepsilon,1/(\chi+\sigma)\right]$, let 
$\{\mu_{k,n},\nu_{k,n}\}\subset
\left[\varepsilon,1/\sigma\right]$, let
$\sigma_{k,n}\in\left[\varepsilon,1/\varepsilon\right]$,
and let $\lambda_n\in\left[\varepsilon,2-\varepsilon\right]$. 
In addition, let $\boldsymbol{x}_{0}\in\HHH$ and 
$\{\boldsymbol{y}_{0},\boldsymbol{z}_{0},
\boldsymbol{v}_{0}^*\}\subset\GGG$.
Iterate
\begin{equation}
\label{e:9951}
\begin{array}{l}
\text{for}\;n=0,1,\ldots\\
\left\lfloor
\begin{array}{l}
\text{for every}\;i\in I_n\\
\left\lfloor
\begin{array}{l}
l_{i,n}^*=\nabla_{\!i}\,\Theta(\boldsymbol{x}_{\pi_i(n)})
+\sum_{k\in K}L_{ki}^*v_{k,\pi_i(n)}^*;\\
a_{i,n}=\prox_{\gamma_{i,\pi_i(n)}f_i}\big(
x_{i,\pi_i(n)}-\gamma_{i,\pi_i(n)}\big(l_{i,n}^*
+\nabla\varphi_i(x_{i,\pi_i(n)})\big)\big);\\
a_{i,n}^*=\gamma_{i,\pi_i(n)}^{-1}(x_{i,\pi_i(n)}
-a_{i,n})-l_{i,n}^*;\\
\xi_{i,n}=\|a_{i,n}-x_{i,\pi_i(n)}\|^2;
\end{array}
\right.\\
\text{for every}\;i\in I\smallsetminus I_n\\
\left\lfloor
\begin{array}{l}
a_{i,n}=a_{i,n-1};\;a_{i,n}^*=a_{i,n-1}^*;\;
\xi_{i,n}=\xi_{i,n-1};\\
\end{array}
\right.\\
\text{for every}\;k\in K_n\\
\left\lfloor
\begin{array}{l}
b_{k,n}=\prox_{\mu_{k,\omega_k(n)}g_k}\big(y_{k,\omega_k(n)}
+\mu_{k,\omega_k(n)}\big(v_{k,\omega_k(n)}^*
-\nabla\psi_k(y_{k,\omega_k(n)})\big)\big);\\
d_{k,n}=\prox_{\nu_{k,\omega_k(n)}h_k}\big(z_{k,\omega_k(n)}
+\nu_{k,\omega_k(n)}v_{k,\omega_k(n)}^*\big);\\
e_{k,n}^*=\sigma_{k,\omega_k(n)}\big(
\sum_{i\in I}L_{ki}x_{i,\omega_k(n)}
-y_{k,\omega_k(n)}-z_{k,\omega_k(n)}\big)+v_{k,\omega_k(n)}^*
;\\
q_{k,n}^*=\mu_{k,\omega_k(n)}^{-1}(y_{k,\omega_k(n)}-b_{k,n})
+v_{k,\omega_k(n)}^*-e_{k,n}^*;\\
t_{k,n}^*=\nu_{k,\omega_k(n)}^{-1}(z_{k,\omega_k(n)}-d_{k,n})
+v_{k,\omega_k(n)}^*-e_{k,n}^*;\\
\eta_{k,n}=\|b_{k,n}-y_{k,\omega_k(n)}\|^2
+\|d_{k,n}-z_{k,\omega_k(n)}\|^2;\\
e_{k,n}=b_{k,n}+d_{k,n}-\sum_{i\in I}L_{ki}a_{i,n};
\end{array}
\right.\\
\text{for every}\;k\in K\smallsetminus K_n\\
\left\lfloor
\begin{array}{l}
b_{k,n}=b_{k,n-1};\;
d_{k,n}=d_{k,n-1};\;
e_{k,n}^*=e_{k,n-1}^*;\;
q_{k,n}^*=q_{k,n-1}^*;\;
t_{k,n}^*=t_{k,n-1}^*;\\
\eta_{k,n}=\eta_{k,n-1};\;
e_{k,n}=b_{k,n}+d_{k,n}-\sum_{i\in I}L_{ki}a_{i,n};
\end{array}
\right.\\
\text{for every}\;i\in I\\
\left\lfloor
\begin{array}{l}
p_{i,n}^*=a_{i,n}^*
+\nabla_{\!i}\,\Theta(\boldsymbol{a}_n)
+\sum_{k\in K}L_{ki}^*e_{k,n}^*;
\end{array}
\right.\\
\begin{aligned}
\Delta_n&=\textstyle
{-}(4\alpha)^{-1}\big(\sum_{i\in I}\xi_{i,n}
+\sum_{k\in K}\eta_{k,n}\big)
+\sum_{i\in I}\scal{x_{i,n}-a_{i,n}}{p_{i,n}^*}\\
&\textstyle
\quad\;+\sum_{k\in K}\big(\scal{y_{k,n}-b_{k,n}}{q_{k,n}^*}
+\scal{z_{k,n}-d_{k,n}}{t_{k,n}^*}
+\scal{e_{k,n}}{v_{k,n}^*-e_{k,n}^*}\big);
\end{aligned}\\
\text{if}\;\Delta_n>0\\
\left\lfloor
\begin{array}{l}
\theta_n=\lambda_n\Delta_n/
\big(\sum_{i\in I}\|p_{i,n}^*\|^2+\sum_{k\in K}\big(
\|q_{k,n}^*\|^2+\|t_{k,n}^*\|^2+\|e_{k,n}\|^2\big)\big);\\
\text{for every}\;i\in I\\
\left\lfloor
\begin{array}{l}
x_{i,n+1}=x_{i,n}-\theta_np_{i,n}^*;
\end{array}
\right.\\
\text{for every}\;k\in K\\
\left\lfloor
\begin{array}{l}
y_{k,n+1}=y_{k,n}-\theta_nq_{k,n}^*;\;
z_{k,n+1}=z_{k,n}-\theta_nt_{k,n}^*;\;
v_{k,n+1}^*=v_{k,n}^*-\theta_ne_{k,n};
\end{array}
\right.\\[1mm]
\end{array}
\right.\\
\text{else}\\
\left\lfloor
\begin{array}{l}
\text{for every}\;i\in I\\
\left\lfloor
\begin{array}{l}
x_{i,n+1}=x_{i,n};
\end{array}
\right.\\
\text{for every}\;k\in K\\
\left\lfloor
\begin{array}{l}
y_{k,n+1}=y_{k,n};\;z_{k,n+1}=z_{k,n};\;v_{k,n+1}^*=v_{k,n}^*.
\end{array}
\right.\\[1mm]
\end{array}
\right.\\[10mm]
\end{array}
\right.
\end{array}
\end{equation}
\end{algorithm}

\begin{corollary}
\label{c:12}
Consider the setting of Algorithm~\ref{algo:2}. Suppose that
\begin{equation}
\label{e:9069}
(\forall k\in K)\quad\epi(g_k+\psi_k)+\epi h_k\;\:
\text{is closed}
\end{equation}
and that Problem~\ref{prob:2} admits a Kuhn--Tucker point, that
is, there exist $\widetilde{\boldsymbol{x}}\in\HHH$
and $\widetilde{\boldsymbol{v}}^*\in\GGG$ such that
\begin{equation}
\label{e:1484}
(\forall i\in I)(\forall k\in K)\quad
\begin{cases}
{-}\sum_{j\in K}L_{ji}^*\widetilde{v}_j^*\in
\partial f_i(\widetilde{x}_i)+\nabla\varphi_i(\widetilde{x}_i)
+\nabla_{\!i}\,\Theta(\widetilde{\boldsymbol{x}})\\
\sum_{j\in I}L_{kj}\widetilde{x}_j\in
\partial\big(g_k^*\infconv\psi_k^*\big)(\widetilde{v}_k^*)
+\partial h_k^*(\widetilde{v}_k^*).
\end{cases}
\end{equation}
Then there exists a solution $\overline{\boldsymbol{x}}$ to 
\eqref{e:min-p} such that, for every $i\in I$,
$x_{i,n}\weakly\overline{x}_i$ and
$a_{i,n}\weakly\overline{x}_i$.
\end{corollary}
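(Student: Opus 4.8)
The plan is to recognize Problem~\ref{prob:2} as the instance of Problem~\ref{prob:1} obtained by setting, for every $i\in I$ and every $k\in K$, $A_i=\partial f_i$, $C_i=\nabla\varphi_i$, $Q_i=0$, $R_i=\nabla_{\!i}\,\Theta$, $s_i^*=0$, $B_k^{\MM}=\partial g_k$, $B_k^{\CC}=\nabla\psi_k$, $B_k^{\LL}=0$, $D_k^{\MM}=\partial h_k$, $D_k^{\CC}=D_k^{\LL}=0$, $r_k=0$, and keeping the operators $L_{ki}$. By the Baillon--Haddad theorem, $\nabla\varphi_i$ is $\alpha_i$-cocoercive and $\nabla\psi_k$ is $\beta_k$-cocoercive, while $\nabla\Theta$ is monotone and $\chi$-Lipschitzian; the zero operators are monotone and $0$-Lipschitzian, the subdifferentials are maximally monotone, and $D_k^{\CC}=0$ may be assigned an arbitrarily large cocoercivity constant, so Problem~\ref{prob:1}\ref{prob:1a}--\ref{prob:1e} hold and $\alpha=\min\{\alpha_i^{\CC},\beta_k^{\CC},\delta_k^{\CC}\}_{i\in I,k\in K}=\min\{\alpha_i,\beta_k\}_{i\in I,k\in K}$. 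Since $J_{\gamma\partial f_i}=\prox_{\gamma f_i}$, $J_{\mu\partial g_k}=\prox_{\mu g_k}$, $J_{\nu\partial h_k}=\prox_{\nu h_k}$, and since $Q_i$, $B_k^{\LL}$, $D_k^{\LL}$ and the $r_k$ all vanish, I would check term by term that \eqref{e:9951} reduces to the instantiation \eqref{e:long1} of Algorithm~\ref{algo:1}; with $\alpha_i^{\LL}=\beta_k^{\LL}=\delta_k^{\LL}=0$, the parameter constraints imposed here become precisely those of Assumption~\ref{a:2}, and Assumptions~\ref{a:1} and~\ref{a:3} are assumed outright.

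Next I would unravel the Kuhn--Tucker set \eqref{e:Z} for this instance. Because $\psi_k$ is finite-valued and continuous, $\partial(g_k+\psi_k)=\partial g_k+\nabla\psi_k$ and $(g_k+\psi_k)^*=g_k^*\einfconv\psi_k^*$, whence $(B_k^{\MM}+B_k^{\CC}+B_k^{\LL})^{-1}=(\partial(g_k+\psi_k))^{-1}=\partial(g_k+\psi_k)^*=\partial(g_k^*\infconv\psi_k^*)$; likewise $(D_k^{\MM}+D_k^{\CC}+D_k^{\LL})^{-1}=(\partial h_k)^{-1}=\partial h_k^*$. Substituting these identities together with $s_i^*=0$ and $r_k=0$ into \eqref{e:Z} shows that $\mathsf{Z}$ is exactly the set of pairs $(\widetilde{\boldsymbol{x}},\widetilde{\boldsymbol{v}}^*)$ satisfying \eqref{e:1484}; hence the hypothesis gives $\mathsf{Z}\neq\emp$, and Proposition~\ref{p:6}\ref{p:6iv} yields that the dual problem \eqref{e:1d} has a solution. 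Theorem~\ref{t:1}\ref{t:1iii} then provides a solution $\overline{\boldsymbol{x}}$ to \eqref{e:1p} and a solution $\overline{\boldsymbol{v}}^*$ to \eqref{e:1d} with $x_{i,n}\weakly\overline{x}_i$ and $a_{i,n}\weakly\overline{x}_i$ for every $i\in I$, and with $(\overline{\boldsymbol{x}},\overline{\boldsymbol{v}}^*)\in\mathsf{Z}$.

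It then remains to convert $(\overline{\boldsymbol{x}},\overline{\boldsymbol{v}}^*)\in\mathsf{Z}$ into the statement that $\overline{\boldsymbol{x}}$ minimizes the objective of \eqref{e:min-p}. Set $\ell_k=(g_k+\psi_k)\infconv h_k$ and $\boldsymbol{L}_k\colon\HHH\to\GG_k\colon\boldsymbol{x}\mapsto\sum_{j\in I}L_{kj}x_j$. By \eqref{e:9069}, $\ell_k\in\Gamma_0(\GG_k)$, the infimal convolution defining $\ell_k$ is exact, and $\ell_k^*=(g_k+\psi_k)^*+h_k^*$. The second condition in \eqref{e:Z} furnishes a decomposition $\boldsymbol{L}_k\overline{\boldsymbol{x}}=a_k+b_k$ with $a_k\in\partial(g_k+\psi_k)^*(\overline{v}_k^*)$ and $b_k\in\partial h_k^*(\overline{v}_k^*)$; adding the Fenchel--Young equalities at $(a_k,\overline{v}_k^*)$ and at $(b_k,\overline{v}_k^*)$ and comparing the outcome with $\ell_k(\boldsymbol{L}_k\overline{\boldsymbol{x}})\leq(g_k+\psi_k)(a_k)+h_k(b_k)$ and $\ell_k(\boldsymbol{L}_k\overline{\boldsymbol{x}})+\ell_k^*(\overline{v}_k^*)\geq\scal{\boldsymbol{L}_k\overline{\boldsymbol{x}}}{\overline{v}_k^*}$ forces $\overline{v}_k^*\in\partial\ell_k(\boldsymbol{L}_k\overline{\boldsymbol{x}})$, hence $\boldsymbol{L}_k^*\overline{v}_k^*\in\partial(\ell_k\circ\boldsymbol{L}_k)(\overline{\boldsymbol{x}})$. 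The first condition in \eqref{e:Z} reads ${-}\sum_{j\in K}L_{ji}^*\overline{v}_j^*-\nabla_{\!i}\,\Theta(\overline{\boldsymbol{x}})-\nabla\varphi_i(\overline{x}_i)\in\partial f_i(\overline{x}_i)$ for every $i\in I$, and $\bigtimes_{i\in I}\partial f_i(\overline{x}_i)=\partial\big(\bigoplus_{i\in I}f_i\big)(\overline{\boldsymbol{x}})$. Adding these contributions together with the gradient of the finite differentiable part $\Theta+\bigoplus_{i\in I}\varphi_i$, invoking the elementary superadditivity of the subdifferential and Fermat's rule, I would conclude that $\overline{\boldsymbol{x}}$ minimizes \eqref{e:min-p}, which together with $x_{i,n}\weakly\overline{x}_i$ and $a_{i,n}\weakly\overline{x}_i$ completes the argument. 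The main obstacle is precisely this last step: one must first use \eqref{e:9069} to guarantee that each $\ell_k$ is proper, lower semicontinuous, and exactly attained before the Fenchel--Young manipulation is legitimate, and then arrange the identities so that only the trivial (superadditive) direction of the subdifferential sum and chain rules is needed, since no qualification condition beyond \eqref{e:9069} is at hand.
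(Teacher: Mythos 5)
Your proposal is correct, and its overall scaffolding coincides with the paper's: the same instantiation of Problem~\ref{prob:1} ($A_i=\partial f_i$, $C_i=\nabla\varphi_i$, $R_i=\nabla_{\!i}\,\Theta$, $B_k^{\MM}=\partial g_k$, $B_k^{\CC}=\nabla\psi_k$, $D_k^{\MM}=\partial h_k$, remaining data zero), the same verification of the hypotheses via Baillon--Haddad and the prox/resolvent identities, the same identification of \eqref{e:1484} with the Kuhn--Tucker set \eqref{e:Z} so that Proposition~\ref{p:6}\ref{p:6iv} yields a dual solution, and the same appeal to Theorem~\ref{t:1}\ref{t:1iii}. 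Where you genuinely diverge is the final translation of the Kuhn--Tucker relations into optimality for \eqref{e:min-p}: the paper establishes the operator identity $(\partial g_k+\nabla\psi_k)\infconv\partial h_k=\partial\big((g_k+\psi_k)\einfconv h_k\big)$ in \eqref{e:1807}, which rests on \eqref{e:9069} through \cite[Theorem~1]{Bura05} and Fenchel--Moreau, and only then invokes the elementary sum/chain inclusions and Fermat's rule; you instead argue pointwise, writing $\sum_{j\in I}L_{kj}\overline{x}_j=a_k+b_k$ with $a_k\in\partial(g_k+\psi_k)^*(\overline{v}_k^*)$, $b_k\in\partial h_k^*(\overline{v}_k^*)$, and combining two Fenchel--Young equalities with $\ell_k^*=(g_k+\psi_k)^*+h_k^*$ to get $\overline{v}_k^*\in\partial\ell_k\big(\sum_{j\in I}L_{kj}\overline{x}_j\big)$ directly, before the same superadditivity-plus-Fermat finish. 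Your route is more elementary and in fact cheaper: exactness and lower semicontinuity of $\ell_k$ play no real role in your computation, and the finiteness of $\ell_k^*(\overline{v}_k^*)$ forced by the Kuhn--Tucker relation already excludes the value $-\infty$, so the closedness hypothesis \eqref{e:9069} is essentially bypassed, whereas the paper needs it because it works at the level of maximally monotone operator identities (which in exchange gives the cleaner structural statement that \eqref{e:min-p} is an exact instance of \eqref{e:2020p}). One attribution slip worth fixing: $\ell_k\in\Gamma_0(\GG_k)$ does not follow from \eqref{e:9069} alone --- closedness of $\epi(g_k+\psi_k)+\epi h_k$ is compatible with $\ell_k\equiv-\infty$ (take $g_k=\mathrm{id}$, $\psi_k=0$, $h_k=-\mathrm{id}$ on $\RR$, whose epigraphs sum to the closed set $\RR^2$); properness requires $\dom(g_k+\psi_k)^*\cap\dom h_k^*\neq\emp$, which here comes from \eqref{e:1484}, exactly as the paper records in \eqref{e:6588}. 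Since your Fenchel--Young bookkeeping supplies that finiteness anyway, this is a misattribution rather than a gap.
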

\begin{proof}
Set
\begin{equation}
\label{e:3906a}
\begin{cases}
(\forall i\in I)\;\; A_i=\partial f_i,\;\:C_i=\nabla\varphi_i,
\;\:\text{and}\;\:R_i=\nabla_{\!i}\,\Theta\\
(\forall k\in K)\;\; B_k^{\MM}=\partial g_k,\;\:
B_k^{\CC}=\nabla\psi_k,\;\:\text{and}\;\:
D_k^{\MM}=\partial h_k.
\end{cases}
\end{equation}
First, \cite[Theorem~20.25]{Livre1} asserts that
the operators $(A_i)_{i\in I}$, $(B_k^{\MM})_{k\in K}$, and
$(D_k^{\MM})_{k\in K}$ are maximally monotone.
Second, it follows from \cite[Corollary~18.17]{Livre1} 
that, for every $i\in I$, $C_i$ is $\alpha_i$-cocoercive
and, for every $k\in K$, $B_k^{\CC}$ is $\beta_k$-cocoercive.
Third, in view of \eqref{e:3906a} and
\cite[Proposition~17.7]{Livre1}, $\boldsymbol{R}=\nabla\Theta$ is
monotone and $\chi$-Lipschitzian. Now consider the problem
\begin{multline}
\label{e:2020p}
\text{find}\;\:\overline{\boldsymbol{x}}\in\HHH\;\:
\text{such that}\\
(\forall i\in I)\;\; 0\in
A_i\overline{x}_i+C_i\overline{x}_i
+R_i\overline{\boldsymbol{x}}
+\Sum_{k\in K}L_{ki}^*
\Bigg(\Big(\big(B_k^{\MM}+B_k^{\CC}\big)\infconv
D_k^{\MM}\Big)
\Bigg(\Sum_{j\in I}L_{kj}\overline{x}_j\Bigg)\Bigg)
\end{multline}
together with its dual
\begin{multline}
\label{e:2020d}
\text{find}\;\:\overline{\boldsymbol{v}}^*\in\GGG
\;\:\text{such that}\\
\big(\exi\boldsymbol{x}\in\HHH\big)
(\forall i\in I)(\forall k\in K)\;\;
\begin{cases}
{-}\Sum_{j\in K}L_{ji}^*\overline{v}_j^*\in
A_ix_i+C_ix_i+R_i\boldsymbol{x}\\
\overline{v}_k^*\in
\Big(\big(B_k^{\MM}+B_k^{\CC}\big)\infconv D_k^{\MM}\Big)
\Bigg(\Sum_{j\in I}L_{kj}x_j\Bigg).
\end{cases}
\end{multline}
Denote by $\mathscr{P}$ and $\mathscr{D}$ the sets of
solutions to \eqref{e:2020p} and \eqref{e:2020d}, respectively.
We observe that, by \eqref{e:3906a}
and \cite[Example~23.3]{Livre1},
Algorithm~\ref{algo:2} is an application of
Algorithm~\ref{algo:1} to the primal-dual problem
\eqref{e:2020p}--\eqref{e:2020d}.
Furthermore, it results from \eqref{e:1484} and 
Proposition~\ref{p:6}\ref{p:6iv}
that $\mathscr{D}\neq\emp$.
According to Theorem~\ref{t:1}\ref{t:1iii},
there exist $\overline{\boldsymbol{x}}\in\mathscr{P}$
and $\overline{\boldsymbol{v}}^*\in\mathscr{D}$
such that, for every $i\in I$ and every $k\in K$,
\begin{equation}
\label{e:3401}
x_{i,n}\weakly\overline{x}_i,\quad
a_{i,n}\weakly\overline{x}_i,
\quad\text{and}\quad
\begin{cases}
{-}\Sum_{j\in K}L_{ji}^*\overline{v}_j^*\in
A_i\overline{x}_i+C_i\overline{x}_i
+R_i\overline{\boldsymbol{x}}\\
\overline{v}_k^*\in
\Big(\big(B_k^{\MM}+B_k^{\CC}\big)\infconv D_k^{\MM}\Big)
\Bigg(\Sum_{j\in I}L_{kj}\overline{x}_j\Bigg).
\end{cases}
\end{equation}
It remains to show that $\overline{\boldsymbol{x}}$ solves
\eqref{e:min-p}. Define
\begin{equation}
\begin{cases}
\label{e:3906}
\boldsymbol{f}=\bigoplus_{i\in I}f_i,
\;\:\boldsymbol{\varphi}=\bigoplus_{i\in I}\varphi_i,
\;\:\boldsymbol{g}=\bigoplus_{k\in K}g_k,
\;\:\boldsymbol{h}=\bigoplus_{k\in K}h_k,\;\:
\text{and}\;\:\boldsymbol{\psi}=\bigoplus_{k\in K}\psi_k\\
\boldsymbol{L}\colon\HHH\to\GGG\colon\boldsymbol{x}\mapsto\big(
\sum_{i\in I}L_{ki}x_i\big)_{k\in K}.
\end{cases}
\end{equation}
We deduce from \cite[Theorem~15.3]{Livre1} that
$(\forall k\in K)$ $(g_k+\psi_k)^*=g_k^*\einfconv\psi_k^*$.
In turn, \eqref{e:1484} implies that
\begin{equation}
\label{e:6588}
(\forall k\in K)\quad
\emp\neq\dom\big(g_k^*\einfconv\psi_k^*\big)\cap\dom h_k^*
=\dom(g_k+\psi_k)^*\cap\dom h_k^*.
\end{equation}
On the other hand, since the sets
$(\epi(g_k+\psi_k)+\epi h_k)_{k \in K}$ are convex,
it follows from \eqref{e:9069} and
\cite[Theorem~3.34]{Livre1} that they are weakly closed.
Therefore, \cite[Theorem~1]{Bura05}
and the Fenchel--Moreau theorem \cite[Theorem~13.37]{Livre1}
imply that
\begin{equation}
\label{e:6589}
(\forall k\in K)\quad
\big((g_k+\psi_k)^*+h_k^*\big)^*
=(g_k+\psi_k)^{**}\einfconv h_k^{**}
=(g_k+\psi_k)\einfconv h_k.
\end{equation}
Hence, we derive from \eqref{e:3906a},
\cite[Corollaries~16.48(iii) and 16.30]{Livre1},
\eqref{e:6589}, and \cite[Proposition~16.42]{Livre1} that
\begin{align}
\label{e:1807}
(\forall k\in K)\quad
\big(B_k^{\MM}+B_k^{\CC}\big)\infconv D_k^{\MM}
&=\big(\partial g_k+\nabla\psi_k\big)\infconv(\partial h_k)
\nonumber\\
&=\Big(\big(\partial(g_k+\psi_k)\big)^{-1}+
(\partial h_k)^{-1}\Big)^{-1}
\nonumber\\
&=\big(\partial(g_k+\psi_k)^*+\partial h_k^*\big)^{-1}
\nonumber\\
&=\Big(\partial\big((g_k+\psi_k)^*+h_k^*\big)\Big)^{-1}
\nonumber\\
&=\partial\big((g_k+\psi_k)^*+h_k^*\big)^*
\nonumber\\
&=\partial\big((g_k+\psi_k)\einfconv h_k\big).
\end{align}
Since it results from \eqref{e:3906} and \eqref{e:6589} that
\begin{equation}
\label{e:5728}
(\boldsymbol{g}+\boldsymbol{\psi})\infconv\boldsymbol{h}
=(\boldsymbol{g}+\boldsymbol{\psi})\einfconv\boldsymbol{h}
=\bigoplus_{k\in K}\big((g_k+\psi_k)\einfconv h_k\big),
\end{equation}
we deduce from \cite[Proposition~16.9]{Livre1} and
\eqref{e:1807} that
\begin{equation}
\partial\big((\boldsymbol{g}
+\boldsymbol{\psi})\einfconv\boldsymbol{h}\big)
=\bigtimes_{k\in K}\partial\big((g_k+\psi_k)\einfconv h_k\big)
=\bigtimes_{k\in K}\big((B_k^{\MM}+B_k^{\CC})
\infconv D_k^{\MM}\big).
\end{equation}
It thus follows from \eqref{e:3401} and \eqref{e:3906} that
$\overline{\boldsymbol{v}}^*\in
\partial((\boldsymbol{g}+\boldsymbol{\psi})\einfconv\boldsymbol{h})
(\boldsymbol{L}\overline{\boldsymbol{x}})$.
On the other hand, since
$\boldsymbol{L}^*\colon\GGG\to\HHH\colon
\boldsymbol{v}^*\mapsto(\sum_{k\in K}L^*_{ki}v_k^*)_{i\in I}$,
we infer from \eqref{e:3401}, \eqref{e:3906a}, \eqref{e:3906},
and \cite[Proposition~16.9]{Livre1} that
${-}\boldsymbol{L}^*\overline{\boldsymbol{v}}^*
\in(C_i\overline{x}_i)_{i\in I}
+\boldsymbol{R}\overline{\boldsymbol{x}}
+\bigtimes_{i\in I}A_i\overline{x}_i
=\nabla\boldsymbol{\varphi}(\overline{\boldsymbol{x}})
+\nabla\Theta(\overline{\boldsymbol{x}})
+\partial\boldsymbol{f}(\overline{\boldsymbol{x}})$.
Hence, we invoke \cite[Proposition~16.6(ii)]{Livre1} to obtain
\begin{align}
\label{e:fu4}
\boldsymbol{0}
&\in\partial\boldsymbol{f}(\overline{\boldsymbol{x}})
+\nabla\boldsymbol{\varphi}(\overline{\boldsymbol{x}})
+\nabla\Theta(\overline{\boldsymbol{x}})
+\boldsymbol{L}^*\overline{\boldsymbol{v}}^*
\nonumber\\
&\subset
\partial\boldsymbol{f}(\overline{\boldsymbol{x}})
+\nabla\boldsymbol{\varphi}(\overline{\boldsymbol{x}})
+\nabla\Theta(\overline{\boldsymbol{x}})
+\boldsymbol{L}^*\Big(
\partial\big((\boldsymbol{g}
+\boldsymbol{\psi})\einfconv\boldsymbol{h}\big)
(\boldsymbol{L}\overline{\boldsymbol{x}})\Big)
\nonumber\\
&\subset
\partial\Big(\boldsymbol{f}+\boldsymbol{\varphi}+\Theta
+\big((\boldsymbol{g}
+\boldsymbol{\psi})\einfconv\boldsymbol{h}\big)\circ
\boldsymbol{L}\Big)(\overline{\boldsymbol{x}}).
\end{align}
However, thanks to \eqref{e:3906} and \eqref{e:5728},
\eqref{e:min-p} is equivalent to
\begin{equation}
\minimize{\boldsymbol{x}\in\HHH}{\boldsymbol{f}(\boldsymbol{x})
+\boldsymbol{\varphi}(\boldsymbol{x})+\Theta(\boldsymbol{x})
+\big((\boldsymbol{g}+\boldsymbol{\psi})
\einfconv\boldsymbol{h}\big)
(\boldsymbol{L}\boldsymbol{x})}.
\end{equation}
Consequently, in view of Fermat's rule \cite[Theorem~16.3]{Livre1},
\eqref{e:fu4} implies that $\overline{\boldsymbol{x}}$ solves
\eqref{e:min-p}.
\end{proof}

\begin{remark}
\label{r:fz79}
In \cite{Jmiv11}, multicomponent image recovery problems were
approached by applying the forward-backward
and the Douglas--Rachford algorithms in a product space. Using
Corollary~\ref{c:12}, we can now solve these problems with
asynchronous block-iterative algorithms and more sophisticated
formulations. For instance, the standard total variation
loss used in \cite{Jmiv11} can be replaced by the $p$th
order Huber total variation penalty of \cite{Hint06}, which turns
out to involve an infimal convolution. 
\end{remark}

To conclude, we provide some scenarios in which condition
\eqref{e:9069} is satisfied.

\begin{proposition}
\label{p:95}
Consider the setting of Problem~\ref{prob:2}.
Suppose that there exist
$\widetilde{\boldsymbol{x}}\in\HHH$
and $\widetilde{\boldsymbol{v}}^*\in\GGG$ such that
\begin{equation}
\label{e:1489}
(\forall i\in I)(\forall k\in K)\quad
\begin{cases}
{-}\sum_{j\in K}L_{ji}^*\widetilde{v}_j^*\in
\partial f_i(\widetilde{x}_i)+\nabla\varphi_i(\widetilde{x}_i)
+\nabla_{\!i}\,\Theta(\widetilde{\boldsymbol{x}})\\
\sum_{j\in I}L_{kj}\widetilde{x}_j\in
\partial\big(g_k^*\infconv\psi_k^*\big)(\widetilde{v}_k^*)
+\partial h_k^*(\widetilde{v}_k^*)
\end{cases}
\end{equation}
and that, for every $k\in K$, one of the following is satisfied:
\begin{enumerate}[label={\rm[\alph*]}]
\item
\label{p:95a}
$0\in\sri(\dom g_k^*+\dom\psi_k^*-\dom h_k^*)$.
\item
\label{p:95b}
$\GG_k$ is finite-dimensional, $h_k$ is polyhedral,
and $\dom h_k^*\cap\reli\dom(g_k+\psi_k)^*\neq\emp$.
\item
\label{p:95c}
$\GG_k$ is finite-dimensional, $g_k$ and $h_k$ are polyhedral,
and $\psi_k=0$.
\end{enumerate}
Then, for every $k\in K$, $\epi(g_k+\psi_k)+\epi h_k$
is closed.
\end{proposition}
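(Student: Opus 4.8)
The plan is to fix $k\in K$, abbreviate $\phi_k=g_k+\psi_k$, and reduce the closedness of $\epi\phi_k+\epi h_k$ to the single identity $(\phi_k^*+h_k^*)^*=\phi_k\einfconv h_k$; the three alternatives \ref{p:95a}--\ref{p:95c} then enter only as three sufficient conditions for this identity, with \ref{p:95c} admitting an even more direct treatment. First I would record the preliminaries, which largely duplicate steps from the proof of Corollary~\ref{c:12}. Since $\psi_k$ is real-valued, convex, and continuous and $g_k\in\Gamma_0(\GG_k)$, we have $\phi_k\in\Gamma_0(\GG_k)$ and, by \cite[Theorem~15.3 and Proposition~12.6(ii)]{Livre1}, $\phi_k^*=g_k^*\einfconv\psi_k^*$; hence $\dom\phi_k^*=\dom g_k^*+\dom\psi_k^*$ and $\dom\phi_k^*=\dom(g_k+\psi_k)^*$. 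Moreover the second relation in \eqref{e:1489} yields $\widetilde{v}_k^*\in\dom\phi_k^*\cap\dom h_k^*$, so $\phi_k^*+h_k^*\in\Gamma_0(\GG_k)$ and therefore $(\phi_k^*+h_k^*)^*\in\Gamma_0(\GG_k)$.

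Next I would establish the reduction. The inclusion $\epi\phi_k+\epi h_k\subset\epi(\phi_k\infconv h_k)$ always holds. Conversely, suppose $\phi_k\infconv h_k$ is exact; then for any $(y,\eta)\in\GG_k\times\RR$ with $(\phi_k\infconv h_k)(y)\le\eta$ there is a minimizer $z$ with $\phi_k(z)+h_k(y-z)=(\phi_k\infconv h_k)(y)\le\eta$, and, since $\phi_k$ and $h_k$ take finite values at $z$ and $y-z$, the decomposition $(y,\eta)=(z,\phi_k(z))+(y-z,\eta-\phi_k(z))$ realizes $(y,\eta)$ as a point of $\epi\phi_k+\epi h_k$. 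Thus exactness gives $\epi\phi_k+\epi h_k=\epi(\phi_k\infconv h_k)$. If in addition $\phi_k\infconv h_k=(\phi_k^*+h_k^*)^*$, then, by the first paragraph, $\epi(\phi_k\infconv h_k)=\epi\big((\phi_k^*+h_k^*)^*\big)$ is closed. Both requirements are subsumed in the identity $(\phi_k^*+h_k^*)^*=\phi_k\einfconv h_k$, so it remains to verify that identity (or the closedness directly) under each hypothesis.

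For \ref{p:95a}, the computation $\dom\phi_k^*=\dom g_k^*+\dom\psi_k^*$ rewrites the hypothesis as $0\in\sri(\dom\phi_k^*-\dom h_k^*)$, which is precisely the qualification condition of \cite[Theorem~15.3]{Livre1} applied to the pair $(\phi_k^*,h_k^*)$ in $\Gamma_0(\GG_k)$; that theorem then gives $(\phi_k^*+h_k^*)^*=\phi_k^{**}\einfconv h_k^{**}=\phi_k\einfconv h_k$. For \ref{p:95b}, $h_k$ polyhedral makes $h_k^*$ polyhedral, and $\dom h_k^*\cap\reli\dom(g_k+\psi_k)^*=\dom h_k^*\cap\reli\dom\phi_k^*\ne\emp$; invoking the polyhedral sharpening of \cite[Theorem~15.3]{Livre1} available in \cite{Livre1}, which requires only that the relative interior of the domain of the non-polyhedral factor meet the domain of the polyhedral one, again produces $(\phi_k^*+h_k^*)^*=\phi_k\einfconv h_k$. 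For \ref{p:95c}, $\psi_k=0$ forces $\phi_k=g_k$, so $\epi\phi_k=\epi g_k$ and $\epi h_k$ are polyhedral subsets of the finite-dimensional space $\GG_k\times\RR$; since the Minkowski sum of two polyhedral sets is polyhedral, hence closed, $\epi\phi_k+\epi h_k$ is closed directly (equivalently, $g_k^*$ and $h_k^*$ are polyhedral and the purely polyhedral conjugate calculus delivers the same identity).

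The mathematical substance is confined to the first two paragraphs; the only real friction I anticipate is bibliographic, namely pinning down in \cite{Livre1} the exact statement of the polyhedral refinement of \cite[Theorem~15.3]{Livre1} used for \ref{p:95b} and checking that its hypothesis is the asymmetric relative-interior condition exactly as stated there. It is also worth noting that \ref{p:95c} does not use \eqref{e:1489}; that hypothesis is carried along only because it is in any case what is required, via \eqref{e:1484}, for the conclusion of this proposition to be usable in Corollary~\ref{c:12}.
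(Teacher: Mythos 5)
Your proposal is correct, and its core (reduce closedness of $\epi(g_k+\psi_k)+\epi h_k$ to the identity $\big((g_k+\psi_k)^*+h_k^*\big)^*=(g_k+\psi_k)\einfconv h_k$, then verify that identity under \ref{p:95a} via the Attouch--Br\'ezis condition of \cite[Theorem~15.3]{Livre1} and under \ref{p:95b} via polyhedral conjugate calculus, using \eqref{e:1489} to get $\dom(g_k+\psi_k)^*\cap\dom h_k^*\neq\emp$) is the same as the paper's. Two steps are genuinely different. First, the paper obtains the reduction by citing \cite[Theorem~1]{Bura05} together with \cite[Theorem~3.34]{Livre1} (closed $=$ weakly closed for convex sets), whereas you prove the needed implication directly: exactness gives $\epi(g_k+\psi_k)+\epi h_k=\epi\big((g_k+\psi_k)\infconv h_k\big)$, and equality with a conjugate makes that epigraph closed; this is a short, self-contained replacement of the only direction of the Burachik--Jeyakumar equivalence that is actually used. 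Second, for \ref{p:95c} you argue that $\epi g_k+\epi h_k$ is a Minkowski sum of polyhedral sets in the finite-dimensional space $\GG_k\oplus\RR$, hence polyhedral and closed, which bypasses the conjugate identity altogether and, as you note, does not use \eqref{e:1489} for that case; the paper instead routes \ref{p:95c} through the polyhedral Fenchel calculus (using the nonempty domain intersection). The only blemish is bibliographic, and you flagged it yourself: the polyhedral refinement invoked in \ref{p:95b} (and the fact that conjugates of polyhedral functions are polyhedral) is not a sharpening of \cite[Theorem~15.3]{Livre1} found in that book; the paper cites \cite[Theorems~19.2 and~20.1]{Rock70}, whose hypothesis ($\dom$ of the polyhedral factor meeting $\reli\dom$ of the other) is exactly the condition you state, and whose finite-dimensional setting is why \ref{p:95b} and \ref{p:95c} assume $\GG_k$ finite-dimensional.
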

\begin{proof}
Let $k\in K$. Since $\dom\psi_k=\GG_k$,
\cite[Theorem~15.3]{Livre1} yields
\begin{equation}
\label{e:8960}
(g_k+\psi_k)^*=g_k^*\einfconv\psi_k^*.
\end{equation}
Therefore, \eqref{e:1489} implies that
\begin{equation}
\label{e:6905}
\emp\neq\dom\big(g_k^*\einfconv\psi_k^*\big)\cap\dom h_k^*
=\dom(g_k+\psi_k)^*\cap\dom h_k^*.
\end{equation}
In view of \eqref{e:6905},
\cite[Theorem~1]{Bura05}, and \cite[Theorem~3.34]{Livre1},
it suffices to show that
$((g_k+\psi_k)^*+h_k^*)^*=(g_k+\psi_k)^{**}\einfconv h_k^{**}$.

\ref{p:95a}:
We deduce from \cite[Proposition~12.6(ii)]{Livre1}
and \eqref{e:8960} that
$0\in\sri(\dom(g_k^*\einfconv\psi_k^*)-\dom h_k^*)
=\sri(\dom(g_k+\psi_k)^*-\dom h_k^*)$.
In turn, \cite[Theorem~15.3]{Livre1} gives
$((g_k+\psi_k)^*+h_k^*)^*=(g_k+\psi_k)^{**}\einfconv h_k^{**}$.

\ref{p:95b}:
Since \cite[Theorem~19.2]{Rock70}
asserts that $h_k^*$ is polyhedral,
we infer from \cite[Theorem~20.1]{Rock70} that
$((g_k+\psi_k)^*+h_k^*)^*=(g_k+\psi_k)^{**}\einfconv h_k^{**}$.

\ref{p:95c}:
Since $g_k^*$ and $h_k^*$ are polyhedral by
\cite[Theorem~19.2]{Rock70},
it follows from \eqref{e:6905} and \cite[Theorem~20.1]{Rock70}
that $(g_k^*+h_k^*)^*=g_k^{**}\einfconv h_k^{**}$.
\end{proof}

\appendix
\section{Appendix}
\label{sec:A}

In this section, $\KK$ is a real Hilbert space.

\begin{lemma}
\label{l:3945}
Let $A\colon\KK\to 2^{\KK}$ be maximally monotone,
let $(x_n)_{n\in\NN}$ be a bounded sequence in $\KK$,
and let $(\gamma_n)_{n\in\NN}$ be a bounded sequence in $\RPP$.
Then $(J_{\gamma_nA}x_n)_{n\in\NN}$ is bounded.
\end{lemma}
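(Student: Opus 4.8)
The statement to be proved is Lemma~\ref{l:3945}: if $A\colon\KK\to 2^{\KK}$ is maximally monotone, $(x_n)_{n\in\NN}$ is bounded, and $(\gamma_n)_{n\in\NN}$ is bounded in $\RPP$, then $(J_{\gamma_n A}x_n)_{n\in\NN}$ is bounded. The plan is to fix an arbitrary point in $\gra A$, say $(a,a^*)$ (such a point exists because $A$ is maximally monotone, hence nonempty), and to use it as an anchor against which to control the resolvent iterates via monotonicity. Set $p_n = J_{\gamma_n A}x_n$, so that $x_n - p_n \in \gamma_n A p_n$, i.e. $\gamma_n^{-1}(x_n-p_n)\in Ap_n$. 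Write $M = \sup_{n}\|x_n\|<\pinf$ and $\overline{\gamma}=\sup_n\gamma_n<\pinf$ (and note $\gamma_n>0$).

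The key step is the monotonicity inequality applied to the pairs $(p_n,\gamma_n^{-1}(x_n-p_n))$ and $(a,a^*)$ in $\gra A$:
\begin{equation*}
0\leq\sscal{p_n-a}{\gamma_n^{-1}(x_n-p_n)-a^*}.
\end{equation*}
Multiplying by $\gamma_n>0$ and expanding gives $0\leq\scal{p_n-a}{x_n-p_n}-\gamma_n\scal{p_n-a}{a^*}$. Rewriting $x_n-p_n = (x_n-a)-(p_n-a)$ in the first term yields $\|p_n-a\|^2\leq\scal{p_n-a}{x_n-a}-\gamma_n\scal{p_n-a}{a^*}\leq\|p_n-a\|\big(\|x_n-a\|+\gamma_n\|a^*\|\big)$ by Cauchy--Schwarz. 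If $p_n\neq a$ we may divide by $\|p_n-a\|$ to get $\|p_n-a\|\leq\|x_n-a\|+\gamma_n\|a^*\|\leq M+\|a\|+\overline{\gamma}\|a^*\|$, and this bound trivially also holds when $p_n=a$. Hence $\sup_n\|p_n\|\leq M+2\|a\|+\overline{\gamma}\|a^*\|<\pinf$, which is the claim.

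There is no real obstacle here; the only point requiring a moment's care is that $A$ maximally monotone is used solely to guarantee $\gra A\neq\emp$ so that an anchor $(a,a^*)$ can be chosen, and that $J_{\gamma_n A}$ is single-valued and everywhere defined (so $p_n$ is well-defined for every $n$) --- both are standard consequences of maximal monotonicity, e.g. via \cite[Proposition~23.2 and Corollary~23.11]{Livre1}. The bound obtained is in fact uniform, depending only on $M$, $\overline{\gamma}$, and the fixed anchor, so boundedness follows immediately.
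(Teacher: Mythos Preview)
Your proof is correct. The approach differs from the paper's, which fixes an arbitrary $x\in\KK$ and bounds $\|J_{\gamma_nA}x_n-J_Ax\|$ by splitting it via the triangle inequality into $\|J_{\gamma_nA}x_n-J_{\gamma_nA}x\|+\|J_{\gamma_nA}x-J_Ax\|$; the first term is handled by nonexpansiveness of resolvents and the second by the parameter-variation estimate $\|J_{\gamma A}x-J_Ax\|\leq|1-\gamma|\,\|J_Ax-x\|$ from \cite[Proposition~23.31(iii)]{Livre1}. Your argument instead anchors at a point $(a,a^*)\in\gra A$ and appeals directly to the defining monotonicity inequality together with Cauchy--Schwarz. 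This is more self-contained---it avoids invoking the nonexpansiveness of $J_{\gamma_nA}$ and the cited parameter-dependence lemma---and it makes explicit the uniform bound $\|p_n-a\|\leq\|x_n-a\|+\gamma_n\|a^*\|$, which is slightly sharper information. The paper's route, on the other hand, is quicker to write once those two standard resolvent facts are granted. Both arguments use maximal monotonicity only to ensure $\gra A\neq\emp$ and that each $J_{\gamma_nA}$ is single-valued and everywhere defined.
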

\begin{proof}
Fix $x\in\KK$. Using the triangle inequality,
the nonexpansiveness of $(J_{\gamma_nA})_{n\in\NN}$,
and \cite[Proposition~23.31(iii)]{Livre1}, we obtain
$(\forall n\in\NN)$
$\|J_{\gamma_nA}x_n-J_Ax\|
\leq\|J_{\gamma_nA}x_n-J_{\gamma_nA}x\|+\|J_{\gamma_nA}x-J_Ax\|
\leq\|x_n-x\|+|1-\gamma_n|\,\|J_Ax-x\|
\leq\|x\|+\sup_{m\in\NN}\|x_m\|+(1+\sup_{m\in\NN}\gamma_m)
\|J_Ax-x\|$.
\end{proof}

\begin{lemma}
\label{l:2156}
Let $\alpha\in\RP$,
let $A\colon\KK\to\KK$ be $\alpha$-Lipschitzian,
let $\sigma\in\RPP$,
and let $\gamma\in\left]0,1/(\alpha+\sigma)\right]$.
Then $\gamma^{-1}\Id-A$ is $\sigma$-strongly monotone.
\end{lemma}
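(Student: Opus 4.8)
The plan is to unwind the definition of strong monotonicity directly. Write $M=\gamma^{-1}\Id-A$; I need to show that $(\forall x\in\KK)(\forall y\in\KK)$ $\scal{x-y}{Mx-My}\geq\sigma\|x-y\|^2$. Fixing $x$ and $y$ in $\KK$, I would expand
\begin{equation*}
\scal{x-y}{Mx-My}=\gamma^{-1}\|x-y\|^2-\scal{x-y}{Ax-Ay},
\end{equation*}
using linearity of the scalar product.

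Next I would bound the cross term from above. By the Cauchy--Schwarz inequality and the $\alpha$-Lipschitz continuity of $A$,
\begin{equation*}
\scal{x-y}{Ax-Ay}\leq\|x-y\|\,\|Ax-Ay\|\leq\alpha\|x-y\|^2,
\end{equation*}
so that $\scal{x-y}{Mx-My}\geq(\gamma^{-1}-\alpha)\|x-y\|^2$.

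Finally, the assumption $\gamma\in\left]0,1/(\alpha+\sigma)\right]$ gives $\gamma^{-1}\geq\alpha+\sigma$, hence $\gamma^{-1}-\alpha\geq\sigma$, and the previous display yields $\scal{x-y}{Mx-My}\geq\sigma\|x-y\|^2$, which is the claim.

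There is no real obstacle here: the statement is an elementary consequence of Cauchy--Schwarz and the Lipschitz hypothesis, and the only thing to be careful about is the direction of the inequality (we need an upper bound on the cross term, which is legitimate since we are subtracting it) and the fact that $\gamma>0$ so that $\gamma^{-1}$ is well defined and the inequality $\gamma\leq 1/(\alpha+\sigma)$ inverts correctly to $\gamma^{-1}\geq\alpha+\sigma$ (note $\alpha+\sigma>0$ since $\sigma\in\RPP$).
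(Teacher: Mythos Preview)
Your proof is correct and essentially identical to the paper's: both expand the inner product, bound $\scal{x-y}{Ax-Ay}$ via Cauchy--Schwarz and the Lipschitz hypothesis, and then use $\gamma^{-1}\geq\alpha+\sigma$ to conclude. The only cosmetic difference is that the paper applies $\gamma^{-1}\geq\alpha+\sigma$ immediately in the chain of inequalities, whereas you first isolate $(\gamma^{-1}-\alpha)\|x-y\|^2$ and then invoke it.
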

\begin{proof}
By Cauchy--Schwarz, 
\begin{align}
&(\forall x\in\KK)(\forall y\in\KK)\quad
\sscal{x-y}{\big(\gamma^{-1}\Id-A\big)x
-\big(\gamma^{-1}\Id-A\big)y}
\nonumber\\
&\hspace{40mm}
=\gamma^{-1}\|x-y\|^2-\scal{x-y}{Ax-Ay}
\nonumber\\
&\hspace{40mm}
\geq(\alpha+\sigma)\|x-y\|^2-\|x-y\|\,\|Ax-Ay\|
\nonumber\\
&\hspace{40mm}
\geq(\alpha+\sigma)\|x-y\|^2-\alpha\|x-y\|^2
\nonumber\\
&\hspace{40mm}
=\sigma\|x-y\|^2,
\end{align}
which proves the assertion.
\end{proof}

\begin{lemma}
\label{l:5368}
Let $I$ be a nonempty finite set,
let $(I_n)_{n\in\NN}$ be nonempty subsets of $I$,
let $P\in\NN$,
and let $(x_n)_{n\in\NN}$ be a sequence in $\KK$.
Suppose that
$\sum_{n\in\NN}\|x_{n+1}-x_n\|^2<\pinf$,
$I_0=I$, and $(\forall n\in\NN)$
$\bigcup_{j=n}^{n+P}I_j=I$.
Furthermore, let $T\in\NN$, let $i\in I$, and let
$(\pi_i(n))_{n\in\NN}$ be a sequence in $\NN$ such that 
$(\forall n\in\NN)$ $n-T\leq\pi_i(n)\leq n$.
For every $n\in\NN$, set
$\overline{\vartheta}_i(n)=
\max\menge{j\in\NN}{j\leq n\;\text{and}\;i\in I_j}$
and $\vartheta_i(n)=\pi_i(\overline{\vartheta}_i(n))$.
Then $x_{\vartheta_i(n)}-x_n\to 0$.
\end{lemma}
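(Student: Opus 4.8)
The plan is to show that the index gap $n - \vartheta_i(n)$ stays bounded and then exploit square-summability of the successive differences $\|x_{n+1}-x_n\|$ to control $\|x_{\vartheta_i(n)}-x_n\|$. First I would observe that for every $n \in \NN$ we have $\overline{\vartheta}_i(n) \geq n - P$: indeed, by Assumption-type hypothesis $\bigcup_{j=n-P}^{n} I_j = I$ whenever $n \geq P$ (and $I_0 = I$ handles $n < P$), so $i$ belongs to $I_j$ for some $j \in \{n-P,\dots,n\}$, and by definition $\overline{\vartheta}_i(n)$ is the largest such $j$. Combining this with the obvious bound $\overline{\vartheta}_i(n) \leq n$, and then applying $n' - T \leq \pi_i(n') \leq n'$ with $n' = \overline{\vartheta}_i(n)$, I get
\begin{equation}
n - (P+T) \leq \vartheta_i(n) = \pi_i\big(\overline{\vartheta}_i(n)\big) \leq \overline{\vartheta}_i(n) \leq n.
\end{equation}
Thus $0 \leq n - \vartheta_i(n) \leq P+T =: N$ for all $n$.

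Next, using a telescoping-plus-triangle-inequality argument, for every $n > N$ I would write
\begin{equation}
\|x_{\vartheta_i(n)} - x_n\| \leq \sum_{m=\vartheta_i(n)}^{n-1}\|x_{m+1}-x_m\| \leq \sum_{m=n-N}^{n-1}\|x_{m+1}-x_m\|,
\end{equation}
where the last inequality holds because $\vartheta_i(n) \geq n-N$ and each summand is nonnegative. The right-hand side is a sum of at most $N$ consecutive terms of the sequence $(\|x_{m+1}-x_m\|)_{m\in\NN}$. Since $\sum_{n\in\NN}\|x_{n+1}-x_n\|^2 < \pinf$, the terms $\|x_{n+1}-x_n\|$ tend to $0$, hence each of the (at most $N$) summands above tends to $0$ as $n\to\infty$, and therefore the finite sum tends to $0$. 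This yields $x_{\vartheta_i(n)} - x_n \to 0$, as claimed.

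The only mild subtlety — and the step I would be most careful about — is the lower bound $\overline{\vartheta}_i(n) \geq n-P$ near the start of the iteration, where $n < P$; there the cleanest route is simply to note that $i \in I_0 = I$, so $\overline{\vartheta}_i(n) \geq 0 \geq n - P$ holds trivially when $n \leq P$, and to use the covering condition $\bigcup_{j=n-P}^{n} I_j = I$ (a reindexing of $\bigcup_{j=m}^{m+P} I_j = I$ with $m = n-P$) only for $n > P$. Everything else is a routine estimate; no compactness or monotonicity of $A$ is needed here, and $\KK$ being a Hilbert space plays no role beyond supplying the norm.
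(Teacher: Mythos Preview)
Your proof is correct and follows essentially the same approach as the paper: bound the index gap by $P+T$ via the covering condition and the lag constraint, then telescope and use the triangle inequality. The only cosmetic difference is in the final step, where the paper applies Cauchy--Schwarz to get $\big(\sum_{j=\vartheta_i(n)}^{\vartheta_i(n)+P+T}\|x_{j+1}-x_j\|\big)^2\leq(P+T+1)\sum_{j=\vartheta_i(n)}^{\vartheta_i(n)+P+T}\|x_{j+1}-x_j\|^2$ and invokes $\vartheta_i(n)\to\pinf$ together with square-summability, whereas you argue directly that the at most $P+T$ summands each tend to $0$; both arguments are valid and equally elementary.
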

\begin{proof}
For every integer $n\geq P$,
since $i\in\bigcup_{j=n-P}^nI_j$, we have 
$n\leq\overline{\vartheta}_i(n)+P
\leq\pi_i(\overline{\vartheta}_i(n))+P+T=\vartheta_i(n)+P+T$.
Hence $\vartheta_i(n)\to\pinf$ and therefore
$\sum_{j=\vartheta_i(n)}^{\vartheta_i(n)+P+T}
\|x_{j+1}-x_j\|^2\to 0$. 
However, it results from our assumption that $(\forall n\in\NN)$
$\vartheta_i(n)=\pi_i(\overline{\vartheta}_i(n))
\leq\overline{\vartheta}_i(n)\leq n$.
We thus deduce from the triangle
and Cauchy--Schwarz inequalities that
\begin{equation}
\|x_n-x_{\vartheta_i(n)}\|^2
\leq\Bigg|\sum_{j=\vartheta_i(n)}^{\vartheta_i(n)+P+T}
\|x_{j+1}-x_j\|\Bigg|^2
\leq(P+T+1)\sum_{j=\vartheta_i(n)}^{\vartheta_i(n)+P+T}
\|x_{j+1}-x_j\|^2
\to 0.
\end{equation}
Consequently, $x_{\vartheta_i(n)}-x_n\to 0$.
\end{proof}

\begin{lemma}[{\cite{Eoop01}}]
\label{l:yeu}
Let $Z$ be a nonempty closed convex subset of $\KK$,
$x_0\in\KK$, and $\varepsilon\in\zeroun$. Suppose that
\begin{equation}
\label{e:1972}
\begin{array}{l}
\text{for}\;n=0,1,\ldots\\
\left\lfloor
\begin{array}{l}
t_n^*\in\KK\;\text{and}\;\eta_n\in\RR\;\text{satisfy}\;
Z\subset H_n=\menge{x\in\KK}{\scal{x}{t_n^*}\leq\eta_n};\\
\Delta_n=\scal{x_n}{t_n^*}-\eta_n;\\
\text{if}\;\Delta_n>0\\
\left\lfloor
\begin{array}{l}
\lambda_n\in\left[\varepsilon,2-\varepsilon\right];\\
x_{n+1}=x_n-(\lambda_n\Delta_n/\|t_n^*\|^2)\,t_n^*;
\end{array}
\right.\\
\text{else}\\
\left\lfloor
\begin{array}{l}
x_{n+1}=x_n.
\end{array}
\right.\\[1mm]
\end{array}
\right.
\end{array}
\end{equation}
Then the following hold:
\begin{enumerate}
\item
\label{l:yeui}
$(\forall z\in Z)(\forall n\in\NN)$ $\|x_{n+1}-z\|
\leq\|x_n-z\|$.
\item
\label{l:yeuii}
$\sum_{n\in\NN}\|x_{n+1}-x_n\|^2<\pinf$.
\item
\label{l:yeuiii}
Suppose that, for every $x\in\KK$ and every strictly
increasing sequence $(k_n)_{n\in\NN}$ in $\NN$,
$x_{k_n}\weakly x$ $\Rightarrow$ $x\in Z$.
Then $(x_n)_{n\in\NN}$ converges weakly to a point in $Z$.
\end{enumerate}
\end{lemma}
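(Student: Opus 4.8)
The plan is to recognize \eqref{e:1972} as a relaxed successive-projection scheme onto the hyperslabs $H_n$ and to deploy the classical theory of Fej\'er monotone sequences. The first thing I would check is that the recursion is well posed: if $\Delta_n>0$, then $t_n^*\neq 0$. Indeed, were $t_n^*=0$, the set $H_n$ would equal $\KK$ or $\emp$ according to the sign of $\eta_n$; since $\emp\neq Z\subset H_n$, necessarily $H_n=\KK$ and hence $\Delta_n=-\eta_n\leq 0$, a contradiction. Consequently, whenever $\Delta_n>0$ the update reads $x_{n+1}=(1-\lambda_n)x_n+\lambda_n\proj_{H_n}x_n$, i.e. a $\lambda_n$-relaxed projection of $x_n$ onto the halfspace $H_n$, and the division by $\|t_n^*\|^2$ is legitimate.

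The core computation establishes \ref{l:yeui} and \ref{l:yeuii} simultaneously. Fixing $z\in Z$ and assuming $\Delta_n>0$ (the case $\Delta_n\leq 0$ being trivial since then $x_{n+1}=x_n$), I would use $\scal{z}{t_n^*}\leq\eta_n$ to obtain $\scal{x_n-z}{t_n^*}\geq\scal{x_n}{t_n^*}-\eta_n=\Delta_n$, and then expand $\|x_{n+1}-z\|^2$ to get $\|x_{n+1}-z\|^2\leq\|x_n-z\|^2-\lambda_n(2-\lambda_n)\Delta_n^2/\|t_n^*\|^2$. Since $\lambda_n(2-\lambda_n)\geq\varepsilon(2-\varepsilon)>0$, this yields \ref{l:yeui}. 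For \ref{l:yeuii}, I would combine $\|x_{n+1}-x_n\|^2=\lambda_n^2\Delta_n^2/\|t_n^*\|^2$ with the preceding inequality to get $\|x_{n+1}-x_n\|^2\leq\bigl((2-\varepsilon)/\varepsilon\bigr)\bigl(\|x_n-z\|^2-\|x_{n+1}-z\|^2\bigr)$ (the inequality holding trivially when $\Delta_n\leq 0$ as well), and then sum over $n$: the right-hand side telescopes and is bounded by $\bigl((2-\varepsilon)/\varepsilon\bigr)\|x_0-z\|^2<\pinf$.

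For \ref{l:yeuiii}, the route is the standard Opial-type argument for Fej\'er monotone sequences. By \ref{l:yeui} the sequence $(x_n)_{n\in\NN}$ is bounded, hence admits weak sequential cluster points, all of which lie in $Z$ by hypothesis; it remains to show there is only one. I would fix $z\in Z$, note that $(\|x_n-z\|)_{n\in\NN}$ is nonincreasing and thus convergent, and then, given two weak cluster points $x,y\in Z$, use the identity $\|x_n-x\|^2-\|x_n-y\|^2=2\scal{x_n}{y-x}+\|x\|^2-\|y\|^2$ to deduce that $(\scal{x_n}{y-x})_{n\in\NN}$ converges; passing to the limit along a subsequence converging weakly to $x$ and along one converging weakly to $y$ forces $\scal{x}{y-x}=\scal{y}{y-x}$, i.e. $\|x-y\|^2=0$. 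A bounded sequence with a unique weak cluster point converges weakly to it, and that point lies in $Z$.

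No step is genuinely hard here: the whole argument is classical (Fej\'er monotonicity together with the two-cluster-point lemma; cf.\ the standard treatment in \cite{Livre1}). The only subtlety worth flagging is precisely the well-posedness check that $t_n^*\neq 0$ on the "active" iterations $\{n:\Delta_n>0\}$, which is what legitimizes both the division by $\|t_n^*\|^2$ and the relaxed-projection identity underpinning the entire analysis.
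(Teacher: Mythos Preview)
Your argument is correct and complete. Note, however, that the paper does not actually prove this lemma: it is stated with a citation to \cite{Eoop01} and used as a black box, so there is no ``paper's proof'' to compare against beyond that reference. Your write-up supplies precisely the standard Fej\'er-monotonicity argument one would expect to find behind that citation (relaxed projection onto halfspaces, telescoping for summability, and the two-cluster-point/Opial lemma for weak convergence), and the well-posedness check that $t_n^*\neq 0$ whenever $\Delta_n>0$ is a nice touch that is often left implicit.
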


We now revisit ideas found in \cite{Moor01,Sico00}
in a format that is be more suited for our purposes.

\begin{lemma}
\label{l:manh}
Let $Z$ be a nonempty closed convex subset of $\KK$ and let
$x_0\in\KK$. Suppose that
\begin{equation}
\label{e:9237}
\begin{array}{l}
\text{for}\;n=0,1,\ldots\\
\left\lfloor
\begin{array}{l}
t_n^*\in\KK\;\text{and}\;\eta_n\in\RR\;\text{satisfy}\;
Z\subset H_n=\menge{x\in\KK}{\scal{x}{t_n^*}\leq\eta_n};\\
\Delta_n=\scal{x_n}{t_n^*}-\eta_n;\\
\text{if}\;\Delta_n>0\\
\left\lfloor
\begin{array}{l}
\tau_n=\|t_n^*\|^2;\;
\varsigma_n=\|x_0-x_n\|^2;\;
\chi_n=\scal{x_0-x_n}{t_n^*};\;
\rho_n=\tau_n\varsigma_n-\chi_n^2;\\
\text{if}\;\rho_n=0\\
\left\lfloor
\begin{array}{l}
\kappa_n=1;\;\lambda_n=\Delta_n/\tau_n;
\end{array}
\right.\\
\text{else}\\
\left\lfloor
\begin{array}{l}
\text{if}\;\chi_n\Delta_n\geq\rho_n\\
\left\lfloor
\begin{array}{l}
\kappa_n=0;\;
\lambda_n=\big(\Delta_n+\chi_n\big)/\tau_n;
\end{array}
\right.\\
\text{else}\\
\left\lfloor
\begin{array}{l}
\kappa_n=1-\chi_n\Delta_n/\rho_n;\;
\lambda_n=\varsigma_n\Delta_n/\rho_n;
\end{array}
\right.\\[1.5mm]
\end{array}
\right.\\
x_{n+1}=(1-\kappa_n)x_0+\kappa_nx_n-\lambda_nt_n^*;
\end{array}
\right.\\
\text{else}\\
\left\lfloor
\begin{array}{l}
x_{n+1}=x_n.
\end{array}
\right.\\[1mm]
\end{array}
\right.
\end{array}
\end{equation}
Then the following hold:
\begin{enumerate}
\item
\label{l:manhi}
$(\forall n\in\NN)$ $\|x_n-x_0\|\leq\|x_{n+1}-x_0\|
\leq\|\proj_Zx_0-x_0\|$.
\item
\label{l:manhii}
$\sum_{n\in\NN}\|x_{n+1}-x_n\|^2<\pinf$ and
$\sum_{n\in\NN}\|\proj_{H_n}x_n-x_n\|^2<\pinf$.
\item
\label{l:manhiii}
Suppose that, for every $x\in\KK$ and every
strictly increasing sequence $(k_n)_{n\in\NN}$ in $\NN$,
$x_{k_n}\weakly x$ $\Rightarrow$ $x\in Z$.
Then $x_n\to\proj_Zx_0$.
\end{enumerate}
\end{lemma}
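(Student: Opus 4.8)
The plan is to identify the recursion \eqref{e:9237} with Haugazeau's projection scheme and then deploy the usual Fej\'er-type estimates. Set $D_0=\KK$ and, for every $n\in\NN$, let $D_{n+1}=\menge{w\in\KK}{\scal{w-x_{n+1}}{x_0-x_{n+1}}\leq 0}$ be the ``history halfspace'' attached to $x_{n+1}$; note that $\proj_{D_n}x_0=x_n$ for every $n$ (trivially for $n=0$, where $D_0=\KK$, and because $x_0-x_n$ is the outer normal to $D_n$ at $x_n$ otherwise). I would first check, by comparing the closed-form map $\Xi$ of \eqref{e:6146} with the explicit expression for the projection of a point onto the intersection of two halfspaces (see \cite{Moor01,Sico00} and \cite{Livre1}), that \eqref{e:9237} produces $x_{n+1}=\proj_{D_n\cap H_n}x_0$ whenever $\Delta_n>0$, whereas when $\Delta_n\leq 0$ one has $x_n\in H_n$, hence $x_n\in D_n\cap H_n$, so again $x_{n+1}=x_n=\proj_{D_n\cap H_n}x_0$ because $D_n\cap H_n\subset D_n$ and $\proj_{D_n}x_0=x_n$. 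This identification is best carried out simultaneously with the induction of the next step, since it presupposes $D_n\cap H_n\neq\emp$.

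For \ref{l:manhi}: the hypothesis gives $Z\subset H_n$ for every $n$, and I claim $Z\subset D_n$ for every $n$ by induction. Indeed $Z\subset D_0=\KK$; and if $Z\subset D_n$, then $Z\subset D_n\cap H_n$, so $\proj_{D_n\cap H_n}x_0$ is well defined and the variational characterization of the projection yields $\scal{z-x_{n+1}}{x_0-x_{n+1}}\leq 0$ for every $z\in D_n\cap H_n$, hence for every $z\in Z$, i.e., $Z\subset D_{n+1}$. Therefore $Z\subset D_n\cap H_n$ for all $n$, whence $\|x_{n+1}-x_0\|=d(x_0,D_n\cap H_n)\leq d(x_0,Z)=\|\proj_Zx_0-x_0\|$; and since $x_{n+1}\in D_n$ with $\proj_{D_n}x_0=x_n$, we obtain $\|x_n-x_0\|\leq\|x_{n+1}-x_0\|$.

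For \ref{l:manhii}: by \ref{l:manhi}, $(\|x_n-x_0\|)_{n\in\NN}$ is nondecreasing and bounded above, hence convergent. From $x_{n+1}\in D_n$ we have $\scal{x_{n+1}-x_n}{x_0-x_n}\leq 0$, so expanding $\|x_{n+1}-x_0\|^2$ gives $\|x_{n+1}-x_n\|^2\leq\|x_{n+1}-x_0\|^2-\|x_n-x_0\|^2$, and summing this telescoping bound yields $\sum_{n\in\NN}\|x_{n+1}-x_n\|^2<\pinf$. For the second series, when $\Delta_n\leq 0$ the $n$th term vanishes since $x_n\in H_n$; when $\Delta_n>0$, using $x_{n+1}\in D_n\cap H_n\subset H_n$, one gets $\|\proj_{H_n}x_n-x_n\|=d(x_n,H_n)\leq d(x_n,D_n\cap H_n)\leq\|x_{n+1}-x_n\|$, so $\sum_{n\in\NN}\|\proj_{H_n}x_n-x_n\|^2\leq\sum_{n\in\NN}\|x_{n+1}-x_n\|^2<\pinf$.

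For \ref{l:manhiii}: by \ref{l:manhi}, $(x_n)_{n\in\NN}$ is bounded, hence has weak sequential cluster points, each of which lies in $Z$ by assumption. If $x_{k_n}\weakly\overline{x}$ with $\overline{x}\in Z$, then weak lower semicontinuity of the norm and \ref{l:manhi} give $\|\overline{x}-x_0\|\leq\liminf\|x_{k_n}-x_0\|=\lim\|x_n-x_0\|\leq\|\proj_Zx_0-x_0\|$, while $\overline{x}\in Z$ forces $\|\overline{x}-x_0\|\geq\|\proj_Zx_0-x_0\|=d(x_0,Z)$; by uniqueness of the projection, $\overline{x}=\proj_Zx_0$. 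Thus $\proj_Zx_0$ is the unique weak cluster point, so $x_n\weakly\proj_Zx_0$, and the chain above also forces $\|x_n-x_0\|\to\|\proj_Zx_0-x_0\|$; since weak convergence together with convergence of the norms implies strong convergence in $\KK$, we conclude $x_n\to\proj_Zx_0$. The only nonroutine ingredient is the per-iteration identification of the $\Xi$-driven update with $\proj_{D_n\cap H_n}x_0$; I expect that verification (including the degenerate cases $\rho_n=0$ and $x_0\in H_n$, which are controlled by the inductively maintained inclusion $Z\subset D_n\cap H_n$) to be the main obstacle, and it is dispatched by invoking the known closed form for projecting onto an intersection of two halfspaces.
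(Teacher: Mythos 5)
Your proposal follows essentially the same route as the paper's proof: recognize the update as the Haugazeau-type step $x_{n+1}=\proj_{G_n\cap H_n}x_0$ (your $D_n$ is the paper's $G_n$), maintain $Z\subset G_n\cap H_n$ by induction via the variational characterization of the projection, and then obtain \ref{l:manhi}--\ref{l:manhiii} by the same Fej\'er/telescoping and weak-cluster-point arguments. The only difference is one of detail: the case-by-case identification of the $(\kappa_n,\lambda_n)$ update with $\proj_{G_n\cap H_n}x_0$ (including the rescaling of $\chi_n,\varsigma_n,\tau_n,\rho_n$ to the halfspace data and the degenerate case $\rho_n=0$), which you defer to the known closed form in \cite{Moor01,Sico00}, is exactly what the paper verifies explicitly via \cite[Corollary~29.25(ii)]{Livre1}.
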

\begin{proof}
Define
$(\forall n\in\NN)$
$G_n=\menge{x\in\KK}{\scal{x-x_n}{x_0-x_n}\leq 0}$.
Then, by virtue of \eqref{e:9237},
\begin{equation}
\label{e:5852}
(\forall n\in\NN)\quad
x_n=\proj_{G_n}x_0\quad\text{and}\quad
\big[\;\Delta_n>0\;\;\Rightarrow\;\;
\proj_{H_n}x_n=x_n-\big(\Delta_n/\|t_n^*\|^2\big)\,t_n^*\;\big].
\end{equation}
Let us establish that
\begin{equation}
\label{e:9401}
(\forall n\in\NN)\quad Z\subset H_n\cap G_n\quad\text{and}\quad
x_{n+1}=\proj_{H_n\cap G_n}x_0.
\end{equation}
Since $G_0=\KK$, \eqref{e:9237} yields $Z\subset H_0=H_0\cap G_0$.
Hence, we derive from
\eqref{e:5852} and \eqref{e:9237} that $\Delta_0>0$ $\Rightarrow$
$[\,\proj_{H_0}x_0=x_0-(\Delta_0/\tau_0)\,t_0^*$ and $\rho_0=0\,]$
$\Rightarrow$
$[\,\proj_{H_0}x_0=x_0-(\Delta_0/\tau_0)\,t_0^*$,
$\kappa_0=1$, and $\lambda_0=\Delta_0/\tau_0\,]$
$\Rightarrow$ $x_1=x_0-(\Delta_0/\tau_0)\,t_0^*
=\proj_{H_0}x_0=\proj_{H_0\cap G_0}x_0$.
On the other hand, $\Delta_0\leq 0$ $\Rightarrow$
$x_1=x_0\in H_0=H_0\cap G_0$ $\Rightarrow$
$x_1=\proj_{H_0\cap G_0}x_0$.
Now assume that, for some integer $n\geq 1$,
$Z\subset H_{n-1}\cap G_{n-1}$ and
$x_n=\proj_{H_{n-1}\cap G_{n-1}}x_0$.
Then, according to \cite[Theorem~3.16]{Livre1},
$Z\subset H_{n-1}\cap G_{n-1}\subset\menge{x\in\KK}{
\scal{x-x_n}{x_0-x_n}\leq 0}=G_n$.
In turn, \eqref{e:9237} entails that $Z\subset H_n\cap G_n$.
Next, it follows from \eqref{e:9237}, \eqref{e:5852}, and
\cite[Proposition~29.5]{Livre1} that
$\Delta_n\leq 0$ $\Rightarrow$
$[\,x_{n+1}=x_n$ and $\proj_{G_n}x_0=x_n\in H_n\,]$
$\Rightarrow$ $x_{n+1}=\proj_{G_n}x_0=\proj_{H_n\cap G_n}x_0$.
To complete the induction argument,
it remains to verify that $\Delta_n>0$ $\Rightarrow$
$x_{n+1}=\proj_{H_n\cap G_n}x_0$.
Assume that $\Delta_n>0$ and set
\begin{equation}
\label{e:2488}
y_n=\proj_{H_n}x_n,\quad
\widetilde{\chi}_n=\scal{x_0-x_n}{x_n-y_n},\quad
\widetilde{\nu}_n=\|x_n-y_n\|^2,\quad\text{and}\quad
\widetilde{\rho}_n=
\varsigma_n\widetilde{\nu}_n-\widetilde{\chi}_n^2.
\end{equation}
Since $\Delta_n>0$, we have
$H_n=\menge{x\in\KK}{\scal{x-y_n}{x_n-y_n}\leq 0}$
and $y_n=x_n-\theta_nt_n^*$, where $\theta_n=\Delta_n/\tau_n>0$.
In turn, we infer from \eqref{e:2488} and \eqref{e:9237} that
\begin{equation}
\label{e:3445}
\widetilde{\chi}_n=\theta_n\chi_n,\quad
\widetilde{\nu}_n=\theta_n^2\tau_n=\theta_n\Delta_n,
\quad\text{and}\quad
\widetilde{\rho}_n=\theta_n^2\rho_n.
\end{equation}
Furthermore, \eqref{e:9237} and the Cauchy--Schwarz inequality
ensure that $\rho_n\geq 0$, which leads to two cases.
\begin{itemize}
\item
$\rho_n=0$:
On the one hand, \eqref{e:9237} asserts that
$x_{n+1}=x_n-(\Delta_n/\tau_n)\,t_n^*=y_n$.
On the other hand, \eqref{e:3445} yields 
$\widetilde{\rho}_n=0$ and, therefore, since $H_n\cap G_n\neq\emp$,
\cite[Corollary~29.25(ii)]{Livre1} yields
$\proj_{H_n\cap G_n}x_0=y_n$. Altogether, $x_{n+1}
=\proj_{H_n\cap G_n}x_0$.

\item
$\rho_n>0$:
By \eqref{e:3445}, $\widetilde{\rho}_n>0$.
First, suppose that
$\chi_n\Delta_n\geq\rho_n$.
It follows from \eqref{e:9237} that
$x_{n+1}=x_0-((\Delta_n+\chi_n)/\tau_n)\,t_n^*$
and from \eqref{e:3445} that
$\widetilde{\chi}_n\widetilde{\nu}_n
=\theta_n^2\chi_n\Delta_n\geq\theta_n^2\rho_n
=\widetilde{\rho}_n$. Thus
\cite[Corollary~29.25(ii)]{Livre1} and \eqref{e:3445} imply that
\begin{align}
\proj_{H_n\cap G_n}x_0
&=x_0+\bigg(1+\frac{\widetilde{\chi}_n}{\widetilde{\nu}_n}\bigg)
(y_n-x_n)
\nonumber\\
&=x_0-\bigg(1+\frac{\chi_n}{\theta_n\tau_n}\bigg)\theta_nt_n^*
\nonumber\\
&=x_0-\frac{\theta_n\tau_n+\chi_n}{\tau_n}\,t_n^*
\nonumber\\
&=x_0-\frac{\Delta_n+\chi_n}{\tau_n}\,t_n^*
\nonumber\\
&=x_{n+1}.
\end{align}
Now suppose that $\chi_n\Delta_n<\rho_n$.
Then $\widetilde{\chi}_n\widetilde{\nu}_n<
\widetilde{\rho}_n$ and hence it results from
\cite[Corollary~29.25(ii)]{Livre1}, \eqref{e:3445},
and \eqref{e:9237} that
\begin{align}
\proj_{H_n\cap G_n}x_0
&=x_n+\frac{\widetilde{\nu}_n}{\widetilde{\rho}_n}\Big(
\widetilde{\chi}_n(x_0-x_n)+\varsigma_n(y_n-x_n)\Big)
\nonumber\\
&=\frac{\widetilde{\chi}_n\widetilde{\nu}_n}{\widetilde{\rho}_n}
\,x_0+\bigg(1-\frac{\widetilde{\chi}_n\widetilde{\nu}_n}
{\widetilde{\rho}_n}\bigg)x_n
+\frac{\widetilde{\nu}_n\varsigma_n}{\widetilde{\rho}_n}(y_n-x_n)
\nonumber\\
&=\frac{\chi_n\Delta_n}{\rho_n}\,x_0+
\bigg(1-\frac{\chi_n\Delta_n}{\rho_n}\bigg)x_n
-\frac{\tau_n\varsigma_n}{\rho_n}\frac{\Delta_n}{\tau_n}\,t_n^*
\nonumber\\
&=x_{n+1}.
\end{align}
\end{itemize}

\ref{l:manhi}:
Let $n\in\NN$. We derive from \eqref{e:9401} that
$\|x_{n+1}-x_0\|=\|\proj_{H_n\cap G_n}x_0-x_0\|
\leq\|\proj_Zx_0-x_0\|$. On the other hand,
since $x_{n+1}\in G_n$ by virtue of \eqref{e:9401}, we have
\begin{align}
\label{e:1124}
\|x_n-x_0\|^2+\|x_{n+1}-x_n\|^2
&\leq\|x_n-x_0\|^2+\|x_{n+1}-x_n\|^2
+2\scal{x_{n+1}-x_n}{x_n-x_0}
\nonumber\\
&=\|x_{n+1}-x_0\|^2.
\end{align}

\ref{l:manhii}:
Let $N\in\NN$.
In view of \eqref{e:1124} and \ref{l:manhi},
$\sum_{n=0}^N\|x_{n+1}-x_n\|^2
\leq\sum_{n=0}^N(\|x_{n+1}-x_0\|^2-\|x_n-x_0\|^2)
=\|x_{N+1}-x_0\|^2\leq\|\proj_Zx_0-x_0\|^2$.
Therefore, $\sum_{n\in\NN}\|x_{n+1}-x_n\|^2<\pinf$.
However, for every $n\in\NN$, since \eqref{e:9401} asserts that
$x_{n+1}\in H_n$, we have
$\|\proj_{H_n}x_n-x_n\|\leq\|x_{n+1}-x_n\|$.
Thus $\sum_{n\in\NN}\|\proj_{H_n}x_n-x_n\|^2<\pinf$.

\ref{l:manhiii}:
It results from \ref{l:manhi} that $(x_n)_{n\in\NN}$ is bounded.
Now let $x\in\KK$, let $(k_n)_{n\in\NN}$ be a strictly
increasing sequence in $\NN$, and suppose that $x_{k_n}\weakly x$.
Using \cite[Lemma~2.42]{Livre1} and \ref{l:manhi},
we deduce that $\|x-x_0\|\leq\varliminf\|x_{k_n}-x_0\|
\leq\|\proj_Zx_0-x_0\|$. Thus, since it results from 
our assumption that $x\in Z$,
we have $x=\proj_Zx_0$, which implies that
$x_n\weakly\proj_Zx_0$ \cite[Lemma~2.46]{Livre1}.
In turn, since
$\varlimsup\|x_n-x_0\|\leq\|\proj_Zx_0-x_0\|$ by \ref{l:manhi},
\cite[Lemma~2.51(i)]{Livre1} forces $x_n\to\proj_Zx_0$.
\end{proof}

\newpage

\end{document}